\documentclass[11pt,a4paper]{article}
\usepackage{amsfonts}
\textwidth=15cm  \textheight=22cm
\usepackage{amsmath,amssymb}
\usepackage{mathrsfs}
\usepackage{hyperref}
\usepackage{graphicx}
\usepackage{pdfsync}
\usepackage{amsthm}
\usepackage{txfonts}

\oddsidemargin=8mm \evensidemargin=8mm \topskip=0mm
%%%%%%%%%%%%%%%%%%%%%%%%%%%%%%%%%%%%%%%%%%%
\newtheorem{thm}{Theorem}[section]
\newtheorem{cor}[thm]{Corollary}
\newtheorem{lem}[thm]{Lemma}
\newtheorem{prop}[thm]{Proposition}
\newtheorem{defn}[thm]{Definition}

\newtheorem{rem}[thm]{Remark}

\numberwithin{equation}{section}\allowdisplaybreaks
%%%%%%%%%%%%%%%%%%%%%%%%%%%%%%%%%%%%%%%%%%%%%

\newcommand{\N}{\ensuremath{\mathbb{N}}}
\newcommand{\R}{\ensuremath{\mathbb{R}}}

\newcommand{\rt}{{\R^3}}

\def\no{\nonumber}

%%%%%%%%%%%%%%%%%%%%%%%%%%%%%%%%%%%%%%%%%%%%%%%

\begin{document}

\title{\Large\bf  Blowup criterion for Navier-Stokes equation in critical \\ Besov space with spatial dimensions $d \geq 4$ }

\author{\normalsize \bf   Kuijie Li$^{\dag}$, \ \ Baoxiang Wang$^{\dag ,}$\footnote{Corresponding Author}   \\
\footnotesize
\it $^\ddag$LMAM, School of Mathematical Sciences, Peking University, Beijing 100871, China, \\
\footnotesize
\it {Email: kuijiel@pku.edu.cn, \ wbx@pku.edu.cn }\\
 } \maketitle

\thispagestyle{empty}
%\begin{multicols}{2}

\begin{abstract}
 This paper is concerned with the blowup criterion for mild solution to the incompressible Navier-Stokes equation in higher spatial dimensions $d \geq 4$.
 By establishing an $\epsilon$ regularity criterion in the spirit of~\cite{CKN}, we show that if the mild solution $u$ with initial data in $\dot B^{-1+d/p}_{p,q}(\R^d) $,\, $d<p,\,q<\infty$ becomes singular at a finite time $T_*$,  then
 \begin{align*}
  \limsup_{t\to T_*} \|u(t)\|_{\dot B^{-1+d/p}_{p,q}(\R^d)} = \infty.
 \end{align*}
The corresponding result in 3D case has been obtained in \cite{GKP}. As a by-product, we also prove a regularity criterion for the Leray-Hopf solution in the critical Besov space, which generalizes the results  in~\cite{DoDu09}, where blowup criterion in critical Lebesgue space $L^d(\mathbb{R}^d)$ is obtained.
\end{abstract}

\section{Introduction} \label{sec1}

In the present work, we consider the regularity problem of the solution to the incompressible Navier-Stokes equation (NS)
\begin{align} \label{ns}
\partial_t u - \Delta u + u\cdot \nabla u + \nabla p = 0, \ \ {\rm div}\,u=0, \ \ u(0,x) =u_0(x)
\end{align}
on $\R^d \times(0,T)$, where $d\geq 4$,\,  $u: \R^d \times (0,\,T) \to \R^d$ denotes the velocity vector field, $p$ represents the scalar pressure, $u_0$ is the initial data. NS plays a fundamental role in the fluid mechanics.

It is well-konwn that NS is scaling-invariant in the sense that if $u, p$ solves \eqref{ns} with data $u_0$,  so does   $u_{\lambda}(x,t)=\lambda u(\lambda x, \lambda^{2 }t)$ and $p_\lambda (x,t)=\lambda^2 p(\lambda x, \lambda^{2 }t)$ with initial data $\lambda u_0(\lambda x)$. A space $X$ defined on $\R^d$ is said to be critical provided that $\|u_0\|_{X} = \|\lambda u_0(\lambda \ \cdot)\|_{X}$ for any $\lambda>0$ (or more generally, $\|u_0\|_{X} \sim \|\lambda u_0(\lambda \ \cdot)\|_{X}$ and the equivalence is independent of $\lambda>0$), for example, $\dot H^{d/2-1}(\R^d),\,L^d(\R^d),\, \dot B^{s_p}_{p,q}(\R^d)$ are critical spaces, where
$$
s_p:= -1+\frac{d}{p}
$$
will be used in the whole paper (see Definiton~\ref{besovdefn} for Besov spaces).

In the poineering work~\cite{Le34},  J. Leray showed the existence of a global weak solution to the 3D Navier-Stokes equation defined on the whole space $\R^d$ with initial data in $L^2$, which is closely linked to the energy structure of the equation.  Later, Hopf~\cite{hopf51} extended this result to bounded smooth domain. The weak solution, which is now said to be the Leray-Hopf solution also exists in higher spatial dimensions, see Section~\ref{lerayhopfregu} for details.

The uniqueness and regularity of Leray-Hopf solution remains a long-standing open problem. However, various conditional results are obtained, for instance, the famous Ladyzhenskaya-Serrin-Prodi criterion, which asserts that a Leray-Hopf solution $u$ is regular and unique on $(0,T] \times \R^d$ if
\begin{align} \label{regucrit}
u \in L^q(0,T; L^r(\R^d)), \ \ \frac{2}{q} + \frac{d}{r} = 1, \ \  3\leq d \leq r \leq \infty.
\end{align}
The endpoint case  $r=d,\, q= \infty$ is much more subtle,  and it was  until 2003 that Escauriaza, Seregin and Sverak~\cite{EsSeSv03}  solved this endpoint case in 3D, later, Dong and Du~\cite{DoDu09} extended their result to the  case $d\geq 3 $. On the other hand, there are lots of interests in relaxing the condition~\eqref{regucrit}, for instance, Phuc showed the same conclusion for the 3D Leray-Hopf solution $u$ by assuming $u \in L^{\infty}(0,T; L^{3,m})$ with $3 \leq  m < \infty$, see~\cite{phuc}. Besides, according to~\cite{GKP,barker16}, the same result also applies for $u$ in 3D provided $u\in L^{\infty}(0,T;\dot B^{s_p}_{p,q})$ with $3<p,\,q<\infty$, a natural extension to the higer dimension in such setting is one of aims of our current paper. Finally, we mention a very interesting work, Buckmaster and Vicol~\cite{BuVi17} recently demonstrates a nonuniqueness result for the periodic weak solution in $\mathbb{T}^3$ with finite kinetic energy,  unfortunately, this weak solution  is   still not known as  a Leray-Hopf solution.
%Very recently, in \cite{barker16}, Barker proved the short time uniqueness of the 3D Leray-Hopf solution to NS associated with initial data in $L^2 \cap \dot B^{-1+3/p}_{p,q}$ with $3<p, q< \infty$. Though he only treated the 3D case, the result still applies to $4D$ and higher, which can be verified  verbatim.

There is another way in constructing strong solution directly. It is well known that the Duhamel formula of~\eqref{ns} can be expressed as follows:
\begin{align} \label{NSI}
u(t) =   e^{ t \Delta } u_0 -  \int^t_0  e^{ (t-\tau) \Delta } \mathbb{P}\ \textrm{div}(u\otimes u)(\tau) d\tau,
\end{align}
where $\mathbb{P}=I-\nabla \Delta^{-1}{\rm div}$ is the
projection operator onto the divergence free vector fields. The solution to \eqref{NSI} is called a  mild solution  or strong solution. Kato and Fujita~\cite{KaFu62} initiated the study of~\eqref{NSI} in a fully invariant functional setting by using the semigroup method, which has a vivid perturbative feature and led to lots of results on various classes of (regular) solutions.  For example, let $d<p<q<\infty$,  Cannone~\cite{Can97}, Planchon~\cite{planchon} and Chemin~\cite{Ch99} used Kato's method to  derive the existence of  a unique local mild solution $u$ (in some properly chosen spaces) to NS with initial data in $\dot B^{s_p}_{p,q}$, see Theorem~\ref{subcribesov} in Section~\ref{proofregu} for more details. One can also refer to~\cite{Gi86,GiMi85,GiMi89,cal90,Ta92,We80,CaMe95,Wa06} and references therein for the local Cauchy theory in  Lebesgue space, Morrey space and others. It is known that NS is ill-posed in all critical Besov spaces $\dot B^{-1}_{\infty, q} (\mathbb{R}^d)$ with $d\geq 2, \ q\in [1,\infty]$ (cf. \cite{BoPa08, Ge08, Wa15, Yo10}) and up to now,  the known largest critical space for which NS is globally well posed for small initial data is $BMO^{-1}$, see Koch and Tataru \cite{KoTa01}.

Generally speaking, the mild solution associated with initial data in many critical spaces is not known to be global except for the small data solution.
This issue is complicated due to the lack of some uniform bounds in some spaces adapted to the NS with scaling invariance. On the opposite side, people turn to seek regularity criterion,  in other words, the blowup criterion. To be precise, let $X$ be a critical space, $u_0\in X$, assume $u$ is the mild solution with $u_0$ and the maximal existence time is denoted by $T_*$, whether or not the following assertion holds:
\begin{align} \label{blcriterion}
T_*< \infty \ \Rightarrow \  \limsup_{t\to T_*} \|u(t)\|_{X} = \infty.
\end{align}
Much progress has been made on this direction, Kenig and Koch proved the case $X= \dot H^{1/2}(\R^3)$ in~\cite{KeKo11}, afterwards, Gallagher, Koch and Planchon~\cite{GaKoPl13,GKP} further showed that~\eqref{blcriterion} is  also true for $X = L^3(\R^3) $ and $\dot B^{s_p}_{p,q}(\R^3) $ with $3<p,\,q<\infty$.
Besides, the upper limit in \eqref{blcriterion} can be refined as  a limit for $X = L^3(\rt),\, \dot B^{-1+3/p}_{p,q}(\rt)$, see Seregin~\cite{Seregin} and Dallas~\cite{albritton16} respectively, both of which employed a splitting argument and some type of weak solution. Motivated by the aforementioned results, we are led to consider whether~\eqref{blcriterion} holds for $X= \dot B^{s_p}_{p,q}(\R^d)$ with $4 \leq d <p,\,q<\infty$. Indeed, we shall answer it affirmably, see Theorem~\ref{mildblowupbesov} below.

Compared to those aimed at obtaining the global regularity of Leray-Hopf solution, another important aspect lies in founding partial regularity result
for weak solution satisfying local energy inequality. On that way, a key ingredient is the so-called $\epsilon$ regularity criterion.  Scheffer~\cite{sch76,sch77} started this way and got various  results for such weak solution in 3D. Inspired by Scheffer's results, L. Caffarelli, R. Kohn and L. Nirenberg in~\cite{CKN} exploited the best partial regularity result to date for the  suitable weak solution of the 3D Navier-Stokes equation.  Lin~\cite{Lin} gave a more direct and sketched proof of Caffarelli, Kohn and Nirenberg's result under the zero external force, for a detailed treatment, one can refer to~\cite{LS},  see also~\cite{vas07} for a De Giorgi method proof.  Recently, in papers~\cite{DoDu07} and~\cite{DoGu}, the authors showed a similar $\epsilon$ regularity criterion for the four dimensional NS in the context of classical solution and suitable weak solution respectively, thus leading to an estimate of the Hausdorff dimension of the singular set. By adapting the method in~\cite{vas07}, Wang and Wu~\cite{WaWu13} gave a unified proof of the partital regularity results for  NS  in the cases $d = 2, 3, 4$.

However, the notion of  suitable weak solution in dimension $d\geq 5$ needs to be slightly  modified (compared to the one in 3D or 4D)  so that the local energy inequality  makes sense, see Remark~\ref{leimakesense}, then an $\epsilon $ regularity criterion corresponding to such suitable weak solution can be  derived, which constitutes an integral part in proving the blowup result for solution in critical Besov space.

To introduce the suitable weak solution in higer dimension, let us specify the notion of  weak solution. Let $\Omega \subset \R^d$ be an open set,  $u,\,p$ is said to be a pair of  weak solution on $\Omega\times (0,T)$, provided  $u \in L^2_{loc}(\Omega \times (0,T)),\, p\in \mathcal{D}'(\Omega \times (0,T))$  satisfies NS in the sense of distributions. Hereafter,  the space dimension $d$, if not otherwise indicated,  is always assumed  to satisfy $d \geq 4$.

%The idea essentially comes from~\cite{}, by contradiction, we rescale the Leray-Hopf solution $u$ around its first blowup time at some singular point and  use a split argument to obtain some global and local bounds of the rescaled solution sequence, then taking the limit, a limit solution which vanishes at the last moment exists,  however, backward uniqueness and unique continuation yield that the limit solution vanishes for some time. Now combining the $\epsilon$ regularity criterion and strong convergence property of the solution sequence, one can get the boundedness of $u$ around its singular point, which is impossible.

\begin{defn}[Suitable weak solution]\label{defsws}
Let $\Omega$ be an open set in $\R^d$, $Q:= \Omega \times (-T_1, T)$, $u\in L^{\infty}(-T_1, T; \dot B^{-1}_{\infty,\infty}(\R^d))$. $u,\, p$ is called a pair of suitable weak solution to~{\rm{(\ref{ns})}} on $Q$  if the following conditions are satisfied:
\begin{itemize}
\item[\rm(1)]
$u \in L^{\infty}\big((-T_1, T), L^2(\Omega)\big) \cap L^2(-T_1, T; H^1(\Omega))$, here $H^1$ denotes the usual Sobolev spaces, \
$p \in L^{{3/2}} (Q)$ ;

\item[\rm(2)] $u$, $p$ is a pair of weak solution  on $Q$;

\item[\rm(3)]
The following local energy inequality
\begin{align} \label{locaenerineq}
& \int_{\Omega} \varphi(x,t) |u(x,t)|^2 dx + 2 \int_{-T_1 }^t \int_{\Omega}\varphi(x,s) |\nabla u(x,s)|^2 dxds  \nonumber \\
 & \ \ \ \ \ \  \leq \int_{-T_1}^t \int_{\Omega}  |u|^2 (\partial_t \varphi + \Delta \varphi) + u\cdot \nabla \varphi (|u|^2 + 2p) dxds
\end{align}
holds for all $t \in (-T_1, T)$ and for all non-negative functions $\varphi \in C_0^{\infty}(\rt \times \mathbb{R})$ vanishing in a neighborhood of the parabolic boundary $\partial Q= \Omega \times \{t = -T_1\} \cup \partial \Omega \times (-T_1, T)$.
\end{itemize}
\end{defn}
\begin{rem} \label{leimakesense}
In the above definition,  $u \in L^{\infty}(-T_1,T;\dot B^{-1}_{\infty,\infty})$ is superfluous for $d =4$.  Indeed, it follows from the first item ${\rm(1)}$ that $u \in L^3_{loc}(Q)$,  thus each term in~\eqref{locaenerineq} makes sense. As for $d \geq 5$, based on our definition, one can obtain that $u \in L^4_{loc}(Q)$, see Proposition~\ref{stlffestimate}.
\end{rem}

Now we come to state our main result.
\begin{thm} \label{regularitycriterion}
Let $ u \in L^{\infty}(-1,0; \dot B^{-1}_{\infty,\infty}(\R^d))$ and  $u,\, p$ be a pair of suitable weak solution to {\rm NS} on $Q(1)$. Assume $ \|u\|_{L^{\infty}(-1,0; \dot B^{-1}_{\infty,\infty})} \leq M$, $ 1<\alpha <2$, then there exist constants $\epsilon_1$ and $C$, which depend on $M, d$ and $\alpha$ only, satisfying the following property. If
\begin{align}\label{smallhypo}
\sup_{-1<t<0}\int_{B(1)} |u|^2 dx + \int_{Q(1)}(|\nabla u|^2+|u|\, |p|) dxdt + \int_{-1}^0 \left(\int_{B(1)}|p|dx  \right)^{\alpha} dt \leq \epsilon_1,
\end{align}
Then
$$
\sup_{(x,t)\in Q(1/2)} |u(x,t)| \leq C.
$$
Here $Q(r):= B(r) \times (-r^2,0)$, $B(r) \subset \R^d$ denotes a ball centered at $0$ with radius $r$.
\end{thm}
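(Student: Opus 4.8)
The plan is to run a Caffarelli-Kohn-Nirenberg type iteration on the shrinking parabolic cylinders $Q(r)$, built entirely out of the scale-invariant quantities that already appear in the hypothesis \eqref{smallhypo}. For $0<r\leq 1$ set
\[
A(r)=\sup_{-r^2<t<0}\frac{1}{r^{d-2}}\int_{B(r)}|u|^2\,dx,\qquad E(r)=\frac{1}{r^{d-2}}\int_{Q(r)}|\nabla u|^2\,dx\,dt,
\]
together with the scaled coupling term $K(r)=r^{-(d-1)}\int_{Q(r)}|u|\,|p|$ and the scaled pressure term $P(r)=r^{-\beta}\int_{-r^2}^0\big(\int_{B(r)}|p|\,dx\big)^{\alpha}\,dt$, where $\beta=\alpha(d-2)+2$ is the exponent dictated by parabolic scaling so that every quantity is dimensionless. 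The first step is to record, via Proposition~\ref{stlffestimate} and $\|u\|_{L^{\infty}(-1,0;\dot B^{-1}_{\infty,\infty})}\leq M$, that $u\in L^4_{loc}(Q(1))$: interpolating the $L^\infty_t\dot B^{-1}_{\infty,\infty}$ bound against the energy quantities through $\|u(t)\|_{L^4}\lesssim\|u(t)\|_{\dot B^{-1}_{\infty,\infty}}^{1/2}\|u(t)\|_{\dot H^1}^{1/2}$ supplies exactly the integrability that the bare energy cannot deliver once $d\geq 5$, and yields a bound controlling $r^{-(d-1)}\int_{Q(r)}|u|^3$ by $(A(r)+E(r))^{a}$ for a suitable power $a$. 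This device replaces, in every dimension, the Sobolev interpolation that only closes at $d=4$.

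The second step is a local pressure estimate. Fixing $Q(r)$, I would split $p=p_1+p_2$ on a slightly larger ball, with $p_1$ the Calder\'on--Zygmund solution of $-\Delta p_1=\partial_i\partial_j(u_iu_j)$ supported near $B(r)$ and $p_2$ harmonic in $B(r)$. The local part $p_1$ is controlled in time by $\|u\|_{L^4}^2$, hence by the velocity quantities above, while the harmonic part obeys the interior gradient/mean-value decay $\|p_2-\overline{p_2}\|_{L^1(B(\rho))}\lesssim(\rho/r)^{d+1}\|p_2\|_{L^1(B(r))}$; only the mean-free part matters in \eqref{locaenerineq} because $\mathrm{div}\,u=0$ forces $\int (u\cdot\nabla\varphi)\,\overline{p_2}=0$. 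Integrating this spatial gain through the $L^\alpha_tL^1_x$ norm — which is precisely where the exponent $\alpha\in(1,2)$ is consumed — converts the decay into a contractive factor for $P$ and $K$ across scales.

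The third step assembles these estimates into the local energy inequality \eqref{locaenerineq}, tested against a cutoff $\varphi$ equal to $1$ on $Q(r/2)$ and supported in $Q(r)$ with $|\nabla\varphi|\lesssim r^{-1}$ and $|\partial_t\varphi|+|\Delta\varphi|\lesssim r^{-2}$. After dividing by $r^{d-2}$ this produces, writing $\Phi(r)=A(r)+E(r)+K(r)+P(r)$, a recursion of the schematic form
\[
\Phi(\theta r)\leq C\theta^{\sigma}\,\Phi(r)+C\theta^{-\kappa}\,\Phi(r)^{1+\delta}
\]
for some $\sigma,\delta>0$. Choosing $\theta$ small to beat the constant, and using that \eqref{smallhypo} forces $\Phi(1)\leq C\epsilon_1$ to be small, a standard iteration lemma yields power decay $\Phi(r)\lesssim r^{\gamma}$ with $\gamma>0$ for all small $r$. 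The final step upgrades this Morrey-type decay to the pointwise bound: the decay places $u$ and $p$ in a subcritical parabolic Morrey class, and a finite bootstrap using the localized Duhamel representation \eqref{NSI} on $Q(1/2)$ — equivalently a parabolic regularity/De Giorgi argument as in~\cite{vas07,WaWu13} — promotes the solution to $L^\infty$, giving $\sup_{Q(1/2)}|u|\leq C$.

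I expect the main obstacle to be the second step combined with the closing of the recursion. Because the pressure is controlled only in $L^\alpha_tL^1_x$ rather than in a scale-invariant $L^{3/2}_{t,x}$ norm, the time-integrability budget is tight, and one must check that the spatial gain $(\rho/r)^{d+1}$ from the harmonic pressure survives the H\"older losses incurred in passing through $\alpha\in(1,2)$ and still leaves a net decaying power $\sigma>0$. Balancing this pressure decay against the dimensional integrability deficit handled by the $\dot B^{-1}_{\infty,\infty}$ interpolation — so that the linear and superlinear terms of the recursion carry compatible powers of $\theta$ simultaneously — is the delicate point, and is where the constraints $d\geq 4$ and $1<\alpha<2$ are genuinely used.
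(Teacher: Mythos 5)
Your toolbox coincides with the paper's --- the $\dot B^{-1}_{\infty,\infty}$--energy interpolation giving local $L^4_{t,x}$ control (Proposition~\ref{stlffestimate}), a Calder\'on--Zygmund-plus-harmonic splitting of the pressure, the divergence-free mean-subtraction trick, and a CKN-type iteration on the local energy inequality --- but the recursion you build from it cannot close in the form you state, and this is a genuine gap rather than a technicality. With your critically normalized quantities the nonlinear terms come out \emph{sublinear}, not superlinear. Indeed, Proposition~\ref{stlffestimate} gives $\|u\|_{L^4(Q(r))}^4\lesssim M^2r^{d-2}\big(A(r)+E(r)\big)$, so your Step 1 bound is $r^{-(d-1)}\int_{Q(r)}|u|^3\lesssim M^{3/2}\big(A(r)+E(r)\big)^{3/4}$, and the pressure enters the energy inequality through $K_{\alpha}^{1/\alpha}$-type factors, i.e.\ with powers $1/2$ and $1/\alpha$ of small quantities. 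A recursion $\Phi(\theta r)\le C\theta^{\sigma}\Phi(r)+C\theta^{-\kappa}\Phi(r)^{a}$ with $a<1$ does not propagate smallness: for fixed $\theta$, the requirement $C\theta^{-\kappa}\epsilon^{a}\le\epsilon/2$ fails as $\epsilon\to0$, since sublinear powers of small numbers are large. To obtain a superlinear gain one must subtract the spatial mean from $|u|^2$ as well as from $p$ and use Poincar\'e to trade the oscillation for an extra factor of $\nabla u$: this is exactly the paper's quantity $F(r)=\int_{Q(r)}|u|\,\big||u|^2-[|u|^2]_{B(r)}\big|\,dxdt$ and Lemma~\ref{estimateofcubic}, which yields total degree $1+1/d$ (hence the gain $\epsilon_1^{1+1/d}$), and Lemma~\ref{diffpressureestimate}, whose far-field pressure term carries $\epsilon_1^{1/2+1/\alpha}$ --- superlinear precisely because $\alpha<2$. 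Moreover the paper does not run a single two-scale comparison at all: it runs a two-tier induction, $(I)_n$ for the oscillation quantities \eqref{localFL} and $(R)_n$ for the energy \eqref{localenergybound}, with the truncated backward heat kernel as test function, so that the energy at scale $r_n$ is controlled by a geometric sum of oscillation quantities over \emph{all} scales $r_k$, $k\le n$.

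Second, and independently, your final step is an unproved theorem, whereas the paper needs no such step. The paper's induction is \emph{subcritically} normalized: $(R)_n$ reads $E_1(r_n)+E_2(r_n)\le C\epsilon_1 r_n^{d}$ (note $r_n^{d}$, not $r_n^{d-2}$), centered at an arbitrary point of $Q(1/2)$, which says $\fint_{B(a,r_n)}|u(x,s)|^2dx\le C\epsilon_1$ at every scale, so $|u|^2\le C\epsilon_1$ a.e.\ follows at once from Lebesgue differentiation. Your dimensionless decay $\Phi(r)\lesssim r^{\gamma}$, with $\gamma<\sigma\le 2$ necessarily, gives only $\sup_t\int_{B(r)}|u|^2\lesssim r^{d-2+\gamma}$ with $d-2+\gamma<d$, so a pointwise bound never follows directly and the entire weight of the theorem falls on your Morrey-to-$L^\infty$ bootstrap. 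That step is asserted, not proved, and the tools you cite do not cover it: the De Giorgi argument of \cite{vas07} is three-dimensional and \cite{WaWu13} treats $d\le 4$ only; for $d\ge5$ the obstruction is precisely that the energy class fails to control $u$ in $L^3_{loc}$ --- the defect the $\dot B^{-1}_{\infty,\infty}$ hypothesis is there to repair --- so any such bootstrap must re-inject the Besov bound and becomes another $\epsilon$-regularity theorem in disguise. Likewise, a localized Duhamel representation for a suitable weak solution (not a mild one), with pressure known only in $L^{\alpha}_tL^1_x$, $1<\alpha<2$, is itself a substantial missing argument.
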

\begin{rem}
The above conclusion is still valid without the assumption $u \in  L^{\infty}(-1,0; \dot B^{-1}_{\infty,\infty})$ for $d =4$ and $\alpha = 3/2$, one can refer to~\cite{WaWu13}.
\end{rem}

\begin{cor} \label{modifiedversionregu}
Let $r>0$,  $u \in L^{\infty}(-r^2,0;\dot B^{-1}_{\infty,\infty}(\R^d))$ and  $u,\, p$ forms a pair of suitable weak solution on $Q(r)$. Assume
  $ \|u\|_{L^{\infty}(-r^2,0; \dot B^{-1}_{\infty,\infty})} \leq M_r$, then there exist constants $\tilde{\epsilon}_1$ and $C$ relying only on $d$ and $M_r$, such that if
\begin{align} \label{scalesmaconst}
\frac{1}{r^{d-1}}\int_{Q(3r/4)} |u|^3 + |p|^{3/2} dx dt \leq \tilde{\epsilon}_1,
\end{align}
Then
\begin{align}
\sup_{(x,t)\in Q(r/4)} |u(x,t)| \leq \frac{C}{r}.
\end{align}
\end{cor}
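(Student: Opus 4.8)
The statement is the scale-invariant reformulation of Theorem~\ref{regularitycriterion}, so the plan is to reduce it to that theorem by the natural parabolic rescaling of \eqref{ns}. Fix $\lambda = r/2$ and set $v(y,s) := \lambda\, u(\lambda y,\lambda^2 s)$ and $q(y,s):=\lambda^2 p(\lambda y,\lambda^2 s)$. Since $(u,p)$ is a suitable weak solution on $Q(r)$, the pair $(v,q)$ is a suitable weak solution on $Q(2)$, and every structural requirement of Definition~\ref{defsws} (the local energy inequality in particular) is invariant under this rescaling. Two scale-invariant facts drive the reduction. First, $\|v\|_{L^\infty(-1,0;\dot B^{-1}_{\infty,\infty})}=\sup_{-r^2/4<t<0}\|u(t)\|_{\dot B^{-1}_{\infty,\infty}}\le M_r$, because the $\dot B^{-1}_{\infty,\infty}$ norm is invariant under $f\mapsto\lambda f(\lambda\cdot)$. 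Second, a direct change of variables turns the hypothesis \eqref{scalesmaconst} into $\int_{Q(3/2)}(|v|^3+|q|^{3/2})\,dy\,ds \le 2^{d}\tilde{\epsilon}_1$, since $r^{-(d-1)}\int_{Q(3r/4)}(|u|^3+|p|^{3/2})$ is precisely the scale-invariant energy. The radius $3/2$ serves as a buffer region in which velocity and pressure are small, while $Q(1/2)$ of the rescaled solution is exactly the image of the target cube $Q(r/4)$.

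The heart of the argument is to upgrade this space-time $L^3$/$L^{3/2}$ smallness into the mixed energy-pressure smallness \eqref{smallhypo} required by Theorem~\ref{regularitycriterion}, where I fix $\alpha=3/2\in(1,2)$. I would test the local energy inequality \eqref{locaenerineq} against a cutoff $\varphi\in C_0^\infty$ that equals $1$ on $Q(1)$ and is supported in $Q(3/2)$. The right-hand side then consists of the three terms $\int|v|^2(\partial_s\varphi+\Delta\varphi)$, $\int v\cdot\nabla\varphi\,|v|^2$ and $\int v\cdot\nabla\varphi\,q$, all supported in $Q(3/2)$; by Hölder these are bounded respectively by $C(\int_{Q(3/2)}|v|^3)^{2/3}$, $C\int_{Q(3/2)}|v|^3$ and $C(\int_{Q(3/2)}|v|^3)^{1/3}(\int_{Q(3/2)}|q|^{3/2})^{2/3}$, hence by a fixed positive power of $\tilde{\epsilon}_1$. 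The inequality then yields $\sup_{-1<s<0}\int_{B(1)}|v|^2 + \int_{Q(1)}(|\nabla v|^2+|v|\,|q|) \lesssim_d \tilde{\epsilon}_1^{2/3}$. For the last term of \eqref{smallhypo}, Hölder in space gives $\int_{B(1)}|q|\le |B(1)|^{1/3}(\int_{B(1)}|q|^{3/2})^{2/3}$, so that $\int_{-1}^0(\int_{B(1)}|q|)^{3/2}\,ds \le |B(1)|^{1/2}\int_{Q(1)}|q|^{3/2}\lesssim \tilde{\epsilon}_1$; this is exactly why the choice $\alpha=3/2$ is convenient.

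With these bounds in hand I would choose $\tilde{\epsilon}_1=\tilde{\epsilon}_1(d,M_r)$ small enough that the left-hand side of \eqref{smallhypo} falls below the threshold $\epsilon_1=\epsilon_1(M_r,d,3/2)$ furnished by Theorem~\ref{regularitycriterion} applied to $(v,q)$ on $Q(1)$. The theorem then gives $\sup_{Q(1/2)}|v|\le C(M_r,d)$. Undoing the rescaling, $|v(y,s)|=\lambda|u(\lambda y,\lambda^2 s)|$ and $Q(1/2)$ in the $(y,s)$ variables is the image of $Q(r/4)$, whence $\sup_{Q(r/4)}|u| = \lambda^{-1}\sup_{Q(1/2)}|v| \le 2C(M_r,d)/r$, which is the asserted bound with $C=2C(M_r,d)$.

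The only genuinely non-routine point is the energy-inequality step of the second paragraph. One must observe that the pointwise-in-time quantity $\sup_s\int_{B(1)}|v|^2$ cannot be extracted directly from the space-time integrability $v\in L^3(Q(3/2))$, and that it is precisely the local energy inequality which propagates smallness to each time slice; the choice of $\varphi$ and the bookkeeping of the three right-hand terms — especially the pressure term $v\cdot\nabla\varphi\,q$, which forces one to use the $L^{3/2}$ smallness of $q$ rather than a mere $L^1$ bound — is where the care lies. Everything else is the invariance of the problem under parabolic scaling.
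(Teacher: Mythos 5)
Your proposal is correct and follows essentially the same route as the paper: rescale by $\lambda = r/2$ to a suitable weak solution on $Q(2)$, use the local energy inequality with a cutoff equal to $1$ on $Q(1)$ and supported in $Q(3/2)$ together with H\"older's inequality to upgrade the scale-invariant $L^3$/$L^{3/2}$ smallness into the hypothesis \eqref{smallhypo} with $\alpha = 3/2$, then invoke Theorem~\ref{regularitycriterion} and undo the scaling. The only differences are cosmetic (the paper records the energy-inequality step as a separate estimate, \eqref{controllocal}, before rescaling, and your constant $2^d$ versus the sharp $2^{d-1}$ is harmless).
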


Next we give our second main result concerning the regularity of mild solution with initial data in critical Besov spaces.
\begin{thm} \label{mildblowupbesov}
Assume $u_0 \in \dot B^{s_p}_{p,q}(\R^d)$, $ 4\leq d<p,\,q<\infty$. Let $u$ be the mild solution associated with  $u_0$, whose maximal existence time is $T_*$. If\, $T_* < \infty$, then necessarily
\begin{align}
\limsup_{t\to T_*} \|u(t)\|_{\dot B^{s_p}_{p,q}(\R^d)} = \infty.
\end{align}
\end{thm}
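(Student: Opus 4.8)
The plan is to argue by contradiction. Suppose $T_*<\infty$ yet $M:=\limsup_{t\to T_*}\|u(t)\|_{\dot B^{s_p}_{p,q}}<\infty$, so that $\|u(t)\|_{\dot B^{s_p}_{p,q}}\le M+1$ on a left neighbourhood $(T_*-\delta,T_*)$. Since Bernstein's inequality gives the embedding $\dot B^{s_p}_{p,q}\hookrightarrow\dot B^{-1}_{\infty,q}\hookrightarrow\dot B^{-1}_{\infty,\infty}$ (note $s_p-d/p=-1$ and $q<\infty$), this bound yields uniform control $\|u(t)\|_{\dot B^{-1}_{\infty,\infty}}\le CM$ for $t$ near $T_*$, which is exactly the standing hypothesis of the $\epsilon$-regularity machinery. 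The whole point is to convert the bounded critical norm into an $L^\infty$ bound for $u$ up to $t=T_*$: such a bound makes $u$ smooth up to $T_*$ by interior parabolic regularity, and the resulting smooth bounded datum can be continued by the local theory (Theorem~\ref{subcribesov}), contradicting the maximality of $T_*$.

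First I would realise $u$ as part of a suitable weak solution. On $(0,T_*)$ the mild solution is smooth, and defining the pressure through the Riesz-transform formula $p=R_iR_j(u_iu_j)$ one checks that $(u,p)$ satisfies Definition~\ref{defsws} on $\R^d\times(T_*-\delta,T_*)$: the local energy inequality \eqref{locaenerineq} holds with equality by smoothness, while the $\dot B^{-1}_{\infty,\infty}$ bound together with Proposition~\ref{stlffestimate} (and Remark~\ref{leimakesense}) supplies the local integrability --- $L^3_{loc}$ when $d=4$, $L^4_{loc}$ when $d\ge5$ --- that renders every term meaningful. Because the critical norm is scale invariant, for any $x_0\in\R^d$ and $r>0$ the rescaled pair $U_{x_0,r}(y,s):=r\,u(x_0+ry,\,T_*+r^2s)$, $P_{x_0,r}(y,s):=r^2p(x_0+ry,\,T_*+r^2s)$ is again a suitable weak solution carrying the same $\dot B^{-1}_{\infty,\infty}$ bound.

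The heart of the argument is to verify the smallness hypothesis \eqref{scalesmaconst} of Corollary~\ref{modifiedversionregu} at one fixed scale, uniformly in the centre: to produce $r_0=r_0(M,d)>0$ with $r_0^{-(d-1)}\int_{Q(3r_0/4)(x_0,T_*)}(|u|^3+|p|^{3/2})\,dx\,dt\le\tilde\epsilon_1$ for every $x_0$. I would prove this by a rescaling-and-compactness argument: were it to fail, there would be $r_n\to0$ and centres $x_n$ along which the scale-invariant quantity stays $\ge\tilde\epsilon_1$; the rescalings $U_n:=U_{x_n,r_n}$ are uniformly bounded in the critical norm and, via the local energy inequality, in $L^\infty_tL^2_{loc}\cap L^2_tH^1_{loc}$, so an Aubin--Lions/parabolic-compactness extraction produces a nontrivial limiting \emph{ancient} suitable weak solution $U$ (ancient because $r_n\to0$ while $T_*>0$) whose scale-invariant quantity on $Q(3/4)$ is still $\ge\tilde\epsilon_1$. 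Here the assumption $q<\infty$ is decisive: it guarantees tightness of the Littlewood--Paley profile, so that no scale-invariant mass escapes in the limit and the trajectory of $U$ stays compact in $\dot B^{s_p}_{p,q}$; a backward-uniqueness and unique-continuation argument in the spirit of \cite{EsSeSv03,DoDu09,GKP}, applied to this compact ancient trajectory (smooth away from its singular set by the $\epsilon$-regularity Theorem~\ref{regularitycriterion}), then forces $U\equiv0$ and contradicts the lower bound. I expect this rigidity step to be the main obstacle, the delicate points being the uniform control of the nonlocal pressure tail along the rescaling and the correct handling of the modified suitable-weak-solution notion in $d\ge5$.

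Granting the uniform smallness, Corollary~\ref{modifiedversionregu} yields $\sup_{Q(r_0/4)(x_0,T_*)}|u|\le C/r_0$ for every $x_0$, hence $u\in L^\infty\big(\R^d\times(T_*-(r_0/4)^2,T_*)\big)$. Interior parabolic regularity then bounds $u$ and its derivatives up to $t=T_*$, so $u(t)$ converges as $t\to T_*$ to a smooth datum from which Theorem~\ref{subcribesov} produces a mild solution on a fixed interval beyond $T_*$ coinciding with $u$. This contradicts the maximality of $T_*$, and therefore $\limsup_{t\to T_*}\|u(t)\|_{\dot B^{s_p}_{p,q}}=\infty$.
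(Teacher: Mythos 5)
Your overall architecture (argue by contradiction, embed into $\dot B^{-1}_{\infty,\infty}$, realise $u$ as a suitable weak solution, verify the smallness hypothesis of Corollary~\ref{modifiedversionregu} by a rescaling/rigidity argument, conclude boundedness and continue past $T_*$) matches the paper's in spirit, but there is a genuine gap at the step your compactness argument rests on: the claim that the rescaled family $U_n$ is uniformly bounded in $L^{\infty}_t L^2_{loc}\cap L^2_t H^1_{loc}$ ``via the local energy inequality''. The local energy inequality is not an a priori bound --- its right-hand side involves cubic and pressure terms on a larger region, which must be controlled first --- and a uniform bound $\|u(t)\|_{\dot B^{s_p}_{p,q}}\leq M$ with $p>d$ does \emph{not} control local energy: $\dot B^{s_p}_{p,p}$ has negative regularity, and its unit ball is unbounded in $L^2_{loc}$ (consider $2^{-js_p}e^{i2^jx_1}\phi(x)$, whose $L^2(B(1))$ norm grows like $2^{j(1-d/p)}$). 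For the same reason, even placing $(u,p)$ in Definition~\ref{defsws} on a domain touching $t=T_*$ is not automatic: smoothness of the mild solution gives energy bounds only on compact subintervals of $(0,T_*)$, not uniformly up to $T_*$. This is precisely where the paper invests its main effort: the Calder\'on-type splitting $u_0=u_{0,1}+u_{0,2}$ of Lemma~\ref{besovdec}, with $u_{0,1}$ small in the subcritical space $\dot B^{s_m+\delta}_{m,m}$ so that $V=NS(u_{0,1})$ exists past $T_*$, and $u_{0,2}\in L^2$ so that $U=u-V$ obeys perturbed energy estimates (Lemma~\ref{trilinearesti} plus Gronwall). This produces $u=V+U$ with $V\in L^{\infty}L^m$, $U\in E_{T_*}$, and the pressure decomposition $q=Q_1+Q_2\in L^{\infty}L^{m/2}+L^2L^2$; these decompositions, repeated for the rescaled solutions (Lemmas~\ref{limitoffirst} and~\ref{energyerror}), are what give the uniform local energy and pressure bounds, the integrability at spatial infinity needed to apply $\epsilon$-regularity away from the origin, and the suitable-weak-solution structure of the limit. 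Your proposal offers no substitute for this device; the ``uniform control of the nonlocal pressure tail'' you flag as delicate is exactly what it supplies.

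Two further problems. First, your rigidity step is a black box: backward uniqueness requires that the vorticity of the limit vanish at the final time outside a ball, together with boundedness there. In the paper the vanishing comes from the concrete fact that $v_n(\cdot,0)$ are rescalings of the \emph{fixed} trace $u(\cdot,T_*)\in\dot B^{s_p}_{p,p}$, which converge weakly to zero by approximating $u(\cdot,T_*)$ by compactly supported smooth functions; this density is where $p,q<\infty$ actually enters --- not any ``tightness of the Littlewood--Paley profile'' keeping the trajectory compact in $\dot B^{s_p}_{p,q}$, a claim that is neither standard nor needed. Second, even granting that the velocity limit vanishes, your contradiction does not close: the scale-invariant quantity you assume stays $\geq\tilde{\epsilon}_1$ contains the pressure term $\int |P_n|^{3/2}$, which need not tend to zero just because the velocity limit does. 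The paper resolves this with the two-scale interior pressure estimate (Lemma~\ref{quacd}): one first chooses a small scale $\gamma$ using only the uniform $L^{3/2}$ bound on $q_n$ over $Q(1)$, and only then chooses $n_0$ using the strong convergence $v_n\to 0$ in $L^3_{loc}$. Your final continuation step (boundedness implies smoothness implies extendability) is fine, but without the splitting argument and these two repairs the proof does not go through.
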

As a direct consequence, we have
\begin{cor} \label{leraybesov}
Let $d\geq 4$,\, $u$ be a Leray-Hopf solution to~\eqref{ns} on $Q_T:=\R^d \times (0,T) $ with $T< \infty$. Suppose further
\begin{align}
u \in L^{\infty}(0,T; \dot B^{s_p}_{p,q}(\R^d)),\ \ \ d< p,\,q < \infty.
\end{align}
Then $u$ is smooth and unique on $\R^d\times (0,T]$.

\end{cor}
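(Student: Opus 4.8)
The plan is to reduce the statement to the blowup criterion of Theorem~\ref{mildblowupbesov} by restarting the Leray-Hopf solution from a suitable interior time and then invoking weak-strong uniqueness. First I would choose a base time. Since $u$ is a Leray-Hopf solution it satisfies $u \in L^{\infty}(0,T; L^2) \cap L^2(0,T; \dot H^1)$, and by hypothesis $u \in L^{\infty}(0,T; \dot B^{s_p}_{p,q})$ with $M := \|u\|_{L^{\infty}(0,T; \dot B^{s_p}_{p,q})} < \infty$. Consequently, for a.e. $t_0 \in (0,T)$ one has simultaneously $u(t_0) \in L^2(\R^d)$ and $u(t_0) \in \dot B^{s_p}_{p,q}(\R^d)$ with $\|u(t_0)\|_{\dot B^{s_p}_{p,q}} \leq M$. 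I fix such a $t_0$, which may be taken arbitrarily close to $0$.

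Next I would produce a mild solution and identify it with $u$. By the local Cauchy theory for data in $\dot B^{s_p}_{p,q}$ (Theorem~\ref{subcribesov}), there is a unique mild solution $v$ issuing from $u(t_0)$ on a maximal interval $[t_0, T_{**})$; being a mild solution in the chosen resolution space, $v$ is smooth for $t > t_0$ and, because $q < \infty$, lies in $C([t_0, T_{**}); \dot B^{s_p}_{p,q})$. The restriction of $u$ to $[t_0, T)$ is again a Leray-Hopf solution with the same $L^2$ datum $u(t_0)$, while $v$ enjoys a Prodi-Serrin type bound on every compact subinterval of $(t_0, T_{**})$. Hence by weak-strong (Prodi-Serrin) uniqueness, $u \equiv v$ on $[t_0, \min(T, T_{**}))$.

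The decisive step is then a contradiction argument via the blowup criterion. Suppose, toward a contradiction, that $T_{**} \leq T$, i.e. the mild solution ceases to exist before time $T$. Because $v = u$ a.e. and $\|u(t)\|_{\dot B^{s_p}_{p,q}} \leq M$ for a.e. $t$, the continuity $v \in C([t_0, T_{**}); \dot B^{s_p}_{p,q})$ upgrades this to $\|v(t)\|_{\dot B^{s_p}_{p,q}} \leq M$ for every $t$ in the interval, so that $\limsup_{t \to T_{**}} \|v(t)\|_{\dot B^{s_p}_{p,q}} \leq M < \infty$. This contradicts Theorem~\ref{mildblowupbesov}, which forces the norm to diverge at a finite maximal time. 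Therefore $T_{**} > T$, and $v$---hence $u$---is smooth on $(t_0, T]$. Letting $t_0 \downarrow 0$ shows that $u$ is smooth on $\R^d \times (0,T]$, and the uniqueness on $(0,T]$ is exactly the content of the weak-strong comparison above.

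I expect the main obstacle to be the weak-strong uniqueness step rather than the blowup input: one must carefully match the two notions of solution (Leray-Hopf versus mild) on the overlap, verifying that the smooth mild solution $v$ genuinely satisfies a Serrin condition $v \in L^s(t_0, t_0 + \delta; L^r)$ with $2/s + d/r = 1$ and $r > d$, so that the classical uniqueness theorem can be applied against the Leray-Hopf solution $u$. A secondary technical point is the passage from the a.e. bound $\|u(t)\|_{\dot B^{s_p}_{p,q}} \leq M$ to a genuine pointwise bound on $v$, which I would handle through the time-continuity of the mild solution in the critical Besov space, valid precisely because $q < \infty$.
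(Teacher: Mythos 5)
There is a genuine gap, and it sits exactly where you suspected: the weak--strong uniqueness step. Classical Prodi--Serrin uniqueness requires the strong solution to satisfy $v\in L^a(t_0,t_0+\delta;L^b)$ with $2/a+d/b=1$, $b>d$, on an interval that \emph{includes} the initial time $t_0$, since the Gronwall argument must be started at $t_0$. But for critical Besov data $u(t_0)\in \dot B^{s_p}_{p,q}$ the mild solution of Theorem~\ref{subcribesov} only comes with the Kato bound $\|v(t)\|_{L^p}\leq C\,(t-t_0)^{s_p/2}$, and since the Serrin exponent paired with $L^p$ is $a=2p/(p-d)$, one has $a\cdot|s_p|/2=1$, so the corresponding time integral $\int_{t_0}^{t_0+\delta}(t-t_0)^{-1}\,dt$ diverges: the Kato class gives exactly weak-$L^a$ in time, missing the Serrin class by a logarithm. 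This is not a technicality that more care fixes: by the heat characterization \eqref{besovheat}, $e^{t\Delta}u(t_0)\in L^a_tL^b_x$ for a Serrin pair if and only if $u(t_0)\in \dot B^{s_b}_{b,a}$ with $a=2b/(b-d)$, and since Besov embeddings only increase the indices, data in $\dot B^{s_p}_{p,q}$ reaches such a space only when $\max(p,q)\leq d+2$; for $\max(p,q)>d+2$ the linear part alone generically fails every Serrin condition near $t_0$. Smoothness of $v$ for $t>t_0$ does not rescue this, because the uniqueness argument cannot be started at $t_0+\epsilon$ without already knowing $u=v$ on $(t_0,t_0+\epsilon)$. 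This obstruction is precisely why the paper states and invokes Theorem~\ref{wsunique} (Barker's weak--strong uniqueness for data in $L^2\cap\dot B^{s_p}_{p,q}$), whose proof proceeds by splitting the data into a small $L^2$ part and a subcritical part and running energy estimates for a perturbed system --- not through a Serrin condition. Replacing your Prodi--Serrin step by Theorem~\ref{wsunique} repairs the argument; with that replacement your contradiction via Theorem~\ref{mildblowupbesov} is the same as the paper's.

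A secondary gap is the restart at an interior time $t_0$. For the restriction $u|_{[t_0,T)}$ to be a Leray--Hopf solution with data $u(t_0)$ you need the energy inequality \emph{from time $t_0$}, i.e.\ the strong energy inequality, which is not part of Definition~\ref{weakleray} and is not known to hold for an arbitrary Leray--Hopf solution (only the inequality from $t=0$ is assumed). The paper sidesteps both issues at once by comparing from $t=0$: weak $L^2$-continuity (item (ii) of Definition~\ref{weakleray}) together with the a.e.\ bound $\|u(t)\|_{\dot B^{s_p}_{p,q}}\leq M$ gives $u_0\in L^2\cap \dot B^{s_p}_{p,q}$, Theorem~\ref{wsunique} then identifies $u$ with $NS(u_0)$ up to $T(u_0)$, and Theorem~\ref{mildblowupbesov} forces $T(u_0)>T$. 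Your restart-and-let-$t_0\downarrow 0$ scheme also makes the final uniqueness claim weaker than stated, since two Leray--Hopf solutions with the same datum $u_0$ could a priori differ already on $(0,t_0)$, whereas the comparison from $t=0$ settles this directly.
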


Throughout out the paper,  $NS(u_0)$ represents the mild solution to~\eqref{NSI} with initial data $u_0$ and its maximal existence time
is denoted by $T(u_0)$.  Fix a point $z_0 = (x_0,t_0) \in \R^d \times \R$, $B(x_0,r)$ stands for  a ball centered at $x_0$ with radius $r$ and $B(r): = B(0,r)$.  Also, we have parabolic domain
 $$
 Q(z_0,r):= B(x_0,r)\times (t_0-r^2,t_0), \ \ \ Q(r): =B(r)\times (-r^2,0).
 $$
$\mathcal{S}$ and $\mathcal{S}'$ denote the Schwartz function class and tempered distribution respectively. For $f\in \mathcal{S}'$,  $\mathscr{F} f$ is the Fourier transform of $f$, and $\mathscr{F}^{-1}f$, the inverse Fourier transform of $f$.
%The closure of $C_0^{\infty}(B(R))$ functions with respect to the Dirichlet integral is denoted by $H_0^1(B(R))$, whose dual space is written as $H^{-1}(B(R))$.
The integral average of a function $u$ over some ball $B(x_0,r)$ is denoted by $[u]_{B(x_0,r)}$, i.e.
\begin{align}
[u]_{B(x_0,r)} =  \fint_{B(x_0,r)} u(x) dx = \frac{1}{|B(x_0,r)|} \int_{B(x_0,r)} u(x) dx.
\end{align}
In addition,  $ \|u\|_{L_t^qL_x^p(Q(r))} $ and $\|u\|_{\mathscr{K}^s_{p,\infty}(a,b)}$ mean
\begin{align}
\|u\|_{L_t^qL_x^p(Q(r))} & = \left( \int_{-r^2}^0 \|u(t)\|^q_{L_x^p(B(r))} dt \right)^{1/q}, \\
\|u\|_{\mathscr{K}^s_{p,\infty}(a,b)} & = \sup_{a<t<b} (t-a)^{-s/2}\|u(t)\|_{L_x^p(\R^d)}, \ \ s<0.
\end{align}
Specially, $\|u\|_{L^{p}(Q(r))} :=\|u\|_{L_t^pL_x^p(Q(r))} $ and  $\mathscr{K}^s_{p,\infty}(T):= \mathscr{K}^s_{p,\infty}(0,T)$ with $T>0$.  Various constants $C$ arise in the course of our work, they may different from line to line, $C(p,q,\ldots)$ or $C_{p,q,\ldots}$ means the constant depends on $p,\,q,\ldots$,  for simplicity, some  indices on which the constant $C$ relies are suppressed, as they are inessential for our argument. Finally, $p'$ is such that $1/p + 1/p' = 1$.

Let us conclude the introduction by giving the plan of the remaining sections. In Section~\ref{prepa}, we present some preliminary estimates, in particular, Lemma~\ref{estimateofcubic} and Lemma~\ref{diffpressureestimate}.  Section~\ref{pfepsilonregu} is devoted to the verification of Theorem~\ref{regularitycriterion} by using the ingredients in the previous section and Corollary~\ref{modifiedversionregu} is also showed in this part.  Theorem~\ref{mildblowupbesov} is demonstrated in Section~\ref{proofregu}, and the proof is divided into three parts; The regularity criterion for Leray-Hopf solution is given in the last section, where Corollary~\ref{leraybesov} is proved.

%%%%%%%%%%%%%%%%%%%%%%%%%%%%%%%%%%%%%%%%%%%%%%%%%%%%%%%%%%%%%%%%%%%%%%%%%%%%%%%%%%%%%%%%%%%%%%%%%%%%%%%%%%%%%%%%%%%%%%%%%%%%%%
%%%%%%%%%%%%%%%%%%%%%%%%%%%%%%%%%%%%%%%%%%%%%%%%%%%%%%%%%%%%%%%%%%%%%%%%%%%%%%%%%%%%%%%%%%%%%%%%%%%%%%%%%%%%%%%%%%%%%%%%%%%%%%
%%%%%%%%%%%%%%%%%%%%%%%%%%%%%%%%%%%%%%%%%%%%%%%%%%%%%%%%%%%%%%%%%%%%%%%%%%%%%%%%%%%%%%%%%%%%%%%%%%%%%%%%%%%%%%%%%%%%%%%%%%%%%%

\section{Preliminary estimates} \label{prepa}

In this section, we present several results that play a major role in establishing the $\epsilon$ regularity criterion.  Let us first recall the definition of Besov spaces, in dimension $d \geq 1$, see~\cite{GKP}.  For a detailed presentation, one can also refer to~\cite{BaChDa11,Tr83,WaHuHaGu11}.
\begin{defn} \label{besovdefn}
Let $\varphi$ be a function  in $\mathcal{S}(\R^d)$ verifying $(\mathscr{F}{\varphi})(\xi) = 1 $ for $|\xi| \leq 1$ and $(\mathscr{F}{\varphi})(\xi) =0 $ for $|\xi|>2$, and denote $\varphi_j(x) : = 2^{jd}\varphi(2^j x)$,  then the frequency localization operators are defined by
$$
S_j : = \varphi_j *, \ \ \ \Delta_j : = S_{j+1} - S_j.
$$
Here $*$ is the convolution operator. A function $f \in \mathcal{S}'(\R^d)$ is said to belong to $\dot B^{s}_{p,q}= \dot B^s_{p,q}(\R^d)$ provided
\begin{itemize}
  \item[\rm (i)] the partial sum $\sum_{-m}^m \Delta_j f$ converges to $f$ as a tempered distribution if $s<d/p$ and after taking the quotient with polynomials if not, and
  \item[\rm (ii)] $ \|f\|_{\dot B^s_{p,q}} := \|2^{js}\|\Delta_j f\|_{L_x^p}\|_{\ell_j^q} < \infty.$
\end{itemize}
\end{defn}
The Besov space possesses many other equivalent characterizations, a particularly useful one in solving NS is given by the heat kernel. Indeed, we have (cf.~\cite{BaChDa11,Tr83})
\begin{align} \label{besovheat}
\|f\|_{\dot B^s_{p,q}} \sim \left (\int_0^{\infty} \big( t^{-s/2} \|e^{t\Delta}f\|_{L_x^p} \big)^q \frac{dt}{t} \right )^{1/q}, \ \ 1\leq p,\,q\leq \infty, \ \ s<0.
\end{align}
Here
\begin{align}
e^{t\Delta} f:= \mathscr{F}^{-1} e^{-t |\xi|^2} \mathscr{F} f(\xi).
\end{align}

The next interpolation inequality is borrowed from~\cite{BaChDa11}.
\begin{prop} \label{lffestimate}
Let $d\geq 1, 1 \leq q <p <\infty$ and $\alpha$ be a positive real number. There exists a constant $C$ such that
\begin{align}
\|f\|_{L^p(\R^d)} \leq C \|f\|^{1-\theta}_{\dot B^{-\alpha}_{\infty,\infty}(\R^d)} \|f\|^{\theta}_{\dot B^{\beta}_{q,q}(\R^d)} \ \  \textrm{with}\ \
\beta = \alpha \Big(\frac{p}{q}-1\Big) \ \  \textrm{and} \ \ \theta= \frac{q}{p}.
\end{align}
\end{prop}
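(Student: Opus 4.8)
The plan is to run a Littlewood--Paley decomposition $f=\sum_j\Delta_j f$ and to estimate each dyadic block in $L^p$ by interpolating between $L^q$ and $L^\infty$. Since $1\le q<p<\infty$, Hölder's inequality gives, for every $j$,
\[
\|\Delta_j f\|_{L^p}\le \|\Delta_j f\|_{L^q}^{q/p}\,\|\Delta_j f\|_{L^\infty}^{1-q/p},
\]
which is exactly where the exponent $\theta=q/p$ enters. The negative-regularity norm is tailor-made to control the $L^\infty$ factor: straight from Definition~\ref{besovdefn} one has $\|\Delta_j f\|_{L^\infty}\le 2^{j\alpha}\|f\|_{\dot B^{-\alpha}_{\infty,\infty}}$. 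Writing $A:=\|f\|_{\dot B^{-\alpha}_{\infty,\infty}}$ and $a_j:=2^{j\beta}\|\Delta_j f\|_{L^q}$ (so that $\|f\|_{\dot B^{\beta}_{q,q}}=\|(a_j)\|_{\ell^q}$), the per-block estimate becomes
\[
\|\Delta_j f\|_{L^p}\le a_j^{q/p}\,A^{1-q/p}\,2^{\,j[\alpha(1-q/p)-\beta q/p]}.
\]

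First I would check that the prescribed value $\beta=\alpha(p/q-1)$ makes the bracketed exponent vanish, i.e.\ $\alpha(1-q/p)=\beta q/p$; this is precisely the identity one also obtains by matching the common scaling degree $-d/p$ of both sides of the claimed inequality, and it is what lets the dyadic weights disappear. With the exponent gone the block estimate reads $\|\Delta_j f\|_{L^p}\le a_j^{q/p}A^{1-q/p}$, and raising to the $p$-th power and summing in $j$ yields
\[
\Big(\sum_j\|\Delta_j f\|_{L^p}^{p}\Big)^{1/p}\le A^{1-q/p}\Big(\sum_j a_j^{q}\Big)^{1/p}=\|f\|_{\dot B^{-\alpha}_{\infty,\infty}}^{1-\theta}\,\|f\|_{\dot B^{\beta}_{q,q}}^{\theta},
\]
which is already the desired right-hand side; so up to the passage from blocks to the full norm the inequality is in hand.

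The main obstacle is exactly that last passage, because the quantity just bounded is the Besov norm $\|f\|_{\dot B^0_{p,p}}$, and the embedding $\dot B^0_{p,p}\hookrightarrow L^p$ holds only for $1<p\le 2$; for $p>2$ controlling $\|f\|_{L^p}$ needs the square-function norm $\dot B^0_{p,2}$, which the $\ell^q$ data do not directly supply. To cover all $p$ I would instead argue through the distribution function: for a threshold index $N$ split $f=S_N f+(f-S_N f)$, use $\|S_N f\|_{L^\infty}\lesssim 2^{N\alpha}A$ to annihilate the low part at height $\lambda\sim 2^{N\alpha}A$, and Chebyshev together with $\|f-S_N f\|_{L^q}\lesssim 2^{-N\beta}\|f\|_{\dot B^{\beta}_{q,q}}$ to bound the super-level set of the high part; choosing $N=N(\lambda)$ optimally gives $|\{|f|>\lambda\}|\lesssim \lambda^{-p}A^{p-q}B^{q}$, which is a weak-type $L^{p,\infty}$ bound. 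Feeding the genuine $\ell^q$ summability of $(a_j)$ into this scheme sharpens it to a Lorentz estimate in $L^{p,q}$, and since $q<p$ the embedding $L^{p,q}\hookrightarrow L^p$ upgrades it to the strong $L^p$ bound. Keeping careful track of the geometric decay of the $K$-functional of the pair $(\dot B^{-\alpha}_{\infty,\infty},\dot B^{\beta}_{q,q})$ away from the balanced index, so as to rule out the spurious low-height divergence in the weak-type integral, is the delicate book-keeping step, and is where I expect the real work to lie; for the range $1<p\le 2$ the direct $\dot B^0_{p,p}\hookrightarrow L^p$ route of the previous paragraph already closes the argument with no Lorentz-space detour.
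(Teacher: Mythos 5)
The paper gives no proof of this proposition --- it is imported verbatim from \cite{BaChDa11} --- so the relevant comparison is with the standard argument in that reference, and your sketch follows essentially that route: the frequency splitting $f=S_Nf+(f-S_Nf)$, the bound $\|S_Nf\|_{L^\infty}\lesssim 2^{N\alpha}\|f\|_{\dot B^{-\alpha}_{\infty,\infty}}$, Chebyshev on the high-frequency tail, and optimization of $N=N(\lambda)$ in the level-set integral; your diagnosis that the naive weak-type bound makes the layer-cake integral diverge logarithmically is also accurate. The one step you defer as ``delicate book-keeping'' is shorter than you anticipate and needs no Lorentz-space or $K$-functional machinery: in $\|f\|_{L^p}^p=p\int_0^\infty\lambda^{p-1}|\{|f|>\lambda\}|\,d\lambda$, keep the full tail bound $\|f-S_Nf\|_{L^q}\le\sum_{j\ge N}2^{-j\beta}a_j$ rather than the crude $2^{-N\beta}\|(a_j)\|_{\ell^q}$; after substituting $\lambda\sim 2^{N\alpha}\|f\|_{\dot B^{-\alpha}_{\infty,\infty}}$, the relation $\alpha(p-q)=\beta q$ reduces the integral to $\|f\|_{\dot B^{-\alpha}_{\infty,\infty}}^{p-q}\sum_N\bigl(\sum_{j\ge N}2^{-(j-N)\beta}a_j\bigr)^q$, and Young's inequality for the discrete convolution with the $\ell^1$ kernel $\bigl(2^{-m\beta}\bigr)_{m\ge0}$ bounds the sum by $C\sum_j a_j^q$, giving the strong $L^p$ bound directly. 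Your preliminary block-wise argument, and the observation that it only yields the embedding $\dot B^0_{p,p}\hookrightarrow L^p$ in the range $1<p\le 2$, is correct but inessential to closing the proof.
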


\begin{prop}\label{besovestimate}
Let $d \geq 1$, $\phi \in \mathcal{S}(\R^d)$, there exists a constant $C$ such that
\begin{align}
\|\phi u \|_{\dot B^{-1}_{\infty,\infty}} \leq C (\|\phi\|_{L^{\infty}} + \|\phi\|_{\dot B^1_{d,1}}) \|u\|_{\dot B^{-1}_{\infty,\infty}}.
\end{align}
\end{prop}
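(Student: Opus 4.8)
The plan is to run Bony's paraproduct decomposition and reduce the estimate to three pieces, the only delicate one being the remainder. Writing
\[
\phi u = T_\phi u + T_u \phi + R(\phi,u),
\]
where $T_\phi u = \sum_j S_{j-1}\phi\,\Delta_j u$, \ $T_u\phi = \sum_j S_{j-1}u\,\Delta_j\phi$ and $R(\phi,u) = \sum_j \Delta_j\phi\,\widetilde\Delta_j u$ with $\widetilde\Delta_j = \Delta_{j-1}+\Delta_j+\Delta_{j+1}$, I would estimate each $\Delta_k$-block of the three terms in $L^\infty$ and then take the supremum of $2^{-k}\|\Delta_k(\cdot)\|_{L^\infty}$ over $k$, which is precisely the $\dot B^{-1}_{\infty,\infty}$ norm. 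Throughout I would use the elementary facts $\|\Delta_j u\|_{L^\infty}\le 2^j\|u\|_{\dot B^{-1}_{\infty,\infty}}$ and, by summing a geometric series, $\|S_{j-1}u\|_{L^\infty}\lesssim 2^j\|u\|_{\dot B^{-1}_{\infty,\infty}}$, together with the uniform boundedness of $\Delta_j$ and $S_j$ on every $L^r$.

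For the two paraproducts the generic summand is frequency-localized in an annulus of size $2^j$, so only the indices $j$ with $|j-k|\le N_0$ contribute to $\Delta_k$. For $T_\phi u$ this yields $\|\Delta_k(T_\phi u)\|_{L^\infty}\lesssim \sum_{|j-k|\le N_0}\|\phi\|_{L^\infty}\,2^j\|u\|_{\dot B^{-1}_{\infty,\infty}}\lesssim 2^k\|\phi\|_{L^\infty}\|u\|_{\dot B^{-1}_{\infty,\infty}}$; for $T_u\phi$ the roles of the factors swap, and using $\|S_{j-1}u\|_{L^\infty}\lesssim 2^j\|u\|_{\dot B^{-1}_{\infty,\infty}}$ together with $\|\Delta_j\phi\|_{L^\infty}\lesssim \|\phi\|_{L^\infty}$ gives the same bound. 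Weighting by $2^{-k}$, both paraproducts are controlled by $C\|\phi\|_{L^\infty}\|u\|_{\dot B^{-1}_{\infty,\infty}}$.

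The remainder is the crux, and is where the $\dot B^1_{d,1}$ norm of $\phi$ enters. Here the summand $\Delta_j\phi\,\widetilde\Delta_j u$ is only localized in a ball of radius $\sim 2^j$, so $\Delta_k R$ collects all $j\ge k-N_0$, and a direct $L^\infty\times L^\infty$ estimate would produce the divergent sum $\sum_{j\ge k}2^j$. The remedy is to estimate one integrability scale lower: by H\"older (putting $\phi$ in $L^d$ and $u$ in $L^\infty$) and boundedness of $\Delta_k$ on $L^d$,
\[
\|\Delta_k R\|_{L^d}\lesssim \sum_{j\ge k-N_0}\|\Delta_j\phi\|_{L^d}\,\|\widetilde\Delta_j u\|_{L^\infty}\lesssim \|u\|_{\dot B^{-1}_{\infty,\infty}}\sum_{j\ge k-N_0}2^j\|\Delta_j\phi\|_{L^d}\le \|u\|_{\dot B^{-1}_{\infty,\infty}}\|\phi\|_{\dot B^1_{d,1}},
\]
the final sum being bounded by the full $\dot B^1_{d,1}$ norm. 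Bernstein's inequality $\|\Delta_k R\|_{L^\infty}\lesssim 2^{kd(1/d)}\|\Delta_k R\|_{L^d}=2^{k}\|\Delta_k R\|_{L^d}$ then converts this into $\|\Delta_k R\|_{L^\infty}\lesssim 2^k\|\phi\|_{\dot B^1_{d,1}}\|u\|_{\dot B^{-1}_{\infty,\infty}}$, i.e. a $\dot B^{-1}_{\infty,\infty}$ bound by $C\|\phi\|_{\dot B^1_{d,1}}\|u\|_{\dot B^{-1}_{\infty,\infty}}$. Summing the three contributions gives the claimed estimate.

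I expect the remainder to be the only genuine obstacle: the naive bound on $\Delta_j\phi\,\widetilde\Delta_j u$ in $L^\infty$ fails because $u$ has negative regularity, and the whole idea is to trade the unsummable high-frequency sum for the summable $\dot B^1_{d,1}$ norm by first working in $L^d$ and returning to $L^\infty$ via Bernstein only at the end. The two paraproducts, by contrast, are routine once the elementary low- and high-frequency bounds are in place.
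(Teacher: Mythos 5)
Your proof is correct and follows essentially the same route as the paper's: Bony decomposition, paraproducts bounded blockwise in $L^{\infty}$ via $\|\phi\|_{L^{\infty}}$ and the dyadic characterization of $\dot B^{-1}_{\infty,\infty}$, and the remainder bounded blockwise in $L^d$ against $\|\phi\|_{\dot B^1_{d,1}}$ and then lifted back by Bernstein, which is exactly the paper's use of the embedding $\dot B^{0}_{d,\infty}\hookrightarrow \dot B^{-1}_{\infty,\infty}$. The only caveat is conventional: with the paper's cutoffs (equal to $1$ on $|\xi|\le 1$, supported in $|\xi|\le 2$) the paraproduct must use $S_{j-5}$ rather than $S_{j-1}$ to guarantee the annulus localization your argument invokes, but this is a harmless adjustment of the frequency separation.
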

\begin{proof}
We rewrite $\phi u$ as
\begin{align}
\phi u= T_u \phi + T_{\phi} u + R(u,\phi),
\end{align}
with
\begin{align}
T_u \phi = \sum_{j\in \mathbb{Z}} (S_{j-5} u) \Delta_j \phi, \ \ \ T_{\phi} u =\sum_{j\in \mathbb{Z}} (S_{j-5} \phi) \Delta_j u, \ \ \ R(u,\phi)= \sum_{|j-k| \leq 4 } \Delta_k u \Delta_j \phi.  \no
\end{align}
Due to the interaction of frequency, one can assert the existence of a positive constant $L$ so that
\begin{align}
\|\Delta_l T_u\phi\|_{L^{\infty}} \leq  C\sum_{|j-l|\leq L} \|S_{j-5}u \Delta_j \phi\|_{L^{\infty}} \leq C\sum_{|j-l|\leq L} \|S_{j-5}u \|_{L^{\infty}} \|\phi\|_{L^{\infty}}
\end{align}
where we have used the fact that $\Delta_j: L^p \rightarrow L^p$ is a bounded operator with $1 \leq p \leq \infty$. Another useful feature is that (cf. \cite{BaChDa11})
\begin{align}
\|f\|_{\dot B^s_{p,q}} \sim \|2^{js}\|S_j f\|_{L_x^p}\|_{\ell_j^q}, \ \  s<0,\ 1\leq p,\,q \leq \infty.
\end{align}
Thereby one can see
\begin{align}
\sup_{l\in \mathbb{Z}}2^{-l}\|\Delta_l T_u\phi\|_{L^{\infty}} \leq C_{L}\|u\|_{\dot B^{-1}_{\infty,\infty}} \|\phi\|_{L^{\infty}}.
\end{align}
The estimate of $T_{\phi} u$ is simpler, since $S_j$ is also bounded from $L^p$ to $L^p$ with $1 \leq p \leq \infty$, then
\begin{align}
\|\Delta_l T_{\phi}u\|_{L^{\infty}} \leq  C \sum_{|j-l|\leq L} \|S_{j-5}\phi \Delta_j u\|_{L^{\infty}} \leq C \sum_{|j-l|\leq L} \|\phi \|_{L^{\infty}} \|\Delta_j u \|_{L^{\infty}}.
\end{align}
Multiplying each side by $2^{-l}$, we can obtain
\begin{align}
\|T_{\phi} u\|_{\dot B^{-1}_{\infty,\infty}} \leq C_{L} \|\phi\|_{L^{\infty}}\|u\|_{\dot B^{-1}_{\infty,\infty}}.
\end{align}
Regarding to $R(u,\phi)$, we will estimate it in  $\dot B^{0}_{d,\infty}$ space, which is better as $\dot B^{0}_{d,\infty} \hookrightarrow \dot B^{-1}_{\infty,\infty}$.   For simplicity, we just consider a representative term  $\sum_{j\in \mathbb{Z}} \Delta_j u \Delta_j v$ in $R(u,\phi)$, since the argument for the others are almost the same.
Once again, there exists another positive constant $\tilde{L}$, such that
\begin{align}
\Big \|\Delta_l \Big(\sum_{j\in \mathbb{Z}} \Delta_j u \Delta_j \phi \Big)\Big\|_{L^{d}} \leq C \sum_{j \geq l-\tilde{L}} \|\Delta_j u \Delta_j \phi\|_{L^d}
\leq & C\sum_{j \geq l-\tilde{L}} \|\Delta_j u \|_{L^{\infty}} \|\Delta_j \phi\|_{L^d} \no \\
& \leq C \|u\|_{\dot B^{-1}_{\infty,\infty}} \|\phi\|_{\dot B^1_{d,1}}.
\end{align}
It turns out that the desired result holds if one collects  estimates for the three terms. The proof is finished.
\end{proof}

The local energy inequality~\eqref{locaenerineq} serves as a main tool to justify Theorem~\ref{regularitycriterion}. In higher spatial dimensions, one of the main difficulty arises in estimating $\int_{-T_1}^t \int_{\Omega} |u|^2 u\cdot \nabla \varphi dxd\tau$ in the right hand side of~\eqref{locaenerineq}, which is bounded by  $\|u\|_{L^3(\Omega\times (-T_1,t))}$. The following result is helpful to control this cubic term and in fact, if $\Omega$ is a ball,  a better local $L_t^4 L_x^4$ norm is obtained in terms of local energy under reasonable regularity assumption.
\begin{prop} \label{stlffestimate}
Let $d \geq 1$, $0< \gamma<1$, $\rho >0 $ and $a<b$. A constant $C(d,\gamma)$ depending only on $d, \gamma$  exists, such that
\begin{align}
\|u\|_{L^4(a,b;L^4(B(\gamma \rho)))}  \leq C(d,\gamma)&\|u\|^{1/2}_{L^{\infty}(a,b; \dot B^{-1}_{\infty,\infty}(\R^d))} \times  \no \\
&\bigg(\frac{\sqrt{b-a}}{\rho}\|u\|_{L^{\infty}(a,b;L^2(B(\rho)))} + \|\nabla u\|_{L^2(a,b;L^2(B(\rho)))}\bigg)^{1/2}.
\end{align}
\end{prop}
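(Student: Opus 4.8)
The plan is to interpolate the $L^4_{t,x}$ norm against the Besov-type negative regularity encoded in $\dot B^{-1}_{\infty,\infty}$ on one side and against the energy quantities $\|u\|_{L^\infty_t L^2_x}$ and $\|\nabla u\|_{L^2_t L^2_x}$ on the other. The natural route is to first establish a spatial inequality, at each fixed time $t$, of the form
\begin{align}
\|u(t)\|_{L^4(B(\gamma\rho))} \leq C(d,\gamma)\, \|u(t)\|^{1/2}_{\dot B^{-1}_{\infty,\infty}(\R^d)}\, \|u(t)\|^{1/2}_{\dot H^1_{\mathrm{loc}}}, \no
\end{align}
where the second factor must be understood as a localized $H^1$-type norm over $B(\rho)$ involving both $\|u(t)\|_{L^2(B(\rho))}$ (with the correct scaling weight $\rho^{-1}$) and $\|\nabla u(t)\|_{L^2(B(\rho))}$, then raise to the fourth power and integrate in time. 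The time integration is where the $L^\infty_t$ norm of the low-regularity factor pulls out of the integral and the remaining $t$-integral over the product of energy factors produces exactly the quadratic bracket in the statement.

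\textbf{Key steps.} First I would prove the pointwise-in-time interpolation. The guiding identity is Gagliardo--Nirenberg/Besov interpolation of the type furnished by Proposition~\ref{lffestimate}: with $p=4$, $\alpha=1$, one expects
\begin{align}
\|f\|_{L^4(\R^d)} \leq C\, \|f\|^{1/2}_{\dot B^{-1}_{\infty,\infty}}\, \|f\|^{1/2}_{\dot B^{\beta}_{q,q}}, \no
\end{align}
and one wants to take $q=2$, so that $\beta = 1\cdot(p/q-1)=1$ and $\theta=q/p=1/2$, giving $\|f\|_{L^4}\leq C\|f\|^{1/2}_{\dot B^{-1}_{\infty,\infty}}\|f\|^{1/2}_{\dot B^1_{2,2}}$ and $\dot B^1_{2,2}=\dot H^1$. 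This handles the global case cleanly. The harder part is that the statement is local: the $L^4$ and the energy norms live on balls $B(\gamma\rho)\subset B(\rho)$, while $\dot B^{-1}_{\infty,\infty}$ is a global norm. So the real work is to pass from the global interpolation to a local one.

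\textbf{Localization.} To localize, I would introduce a smooth cutoff $\chi$ supported in $B(\rho)$, equal to $1$ on $B(\gamma\rho)$, with $\|\nabla\chi\|_{L^\infty}\lesssim (\,(1-\gamma)\rho\,)^{-1}$, and apply the global inequality to $\chi u(t)$. Then $\|u(t)\|_{L^4(B(\gamma\rho))}\leq \|\chi u(t)\|_{L^4(\R^d)}$. For the $\dot H^1$ factor of $\chi u$ one has $\nabla(\chi u)=\chi\nabla u + u\nabla\chi$, so
\begin{align}
\|\chi u(t)\|_{\dot H^1} \lesssim \|\nabla u(t)\|_{L^2(B(\rho))} + \frac{1}{(1-\gamma)\rho}\|u(t)\|_{L^2(B(\rho))}, \no
\end{align}
which reproduces exactly the two energy terms in the stated bracket (the weight $\rho^{-1}$ on the $L^2$ term is the scaling-consistent one). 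For the $\dot B^{-1}_{\infty,\infty}$ factor I would invoke Proposition~\ref{besovestimate} with $\phi=\chi$, giving $\|\chi u(t)\|_{\dot B^{-1}_{\infty,\infty}}\leq C(\|\chi\|_{L^\infty}+\|\chi\|_{\dot B^1_{d,1}})\|u(t)\|_{\dot B^{-1}_{\infty,\infty}}$; absorbing the cutoff constants (which depend on $\gamma,d$ but not on $\rho$ after a scaling normalization) into $C(d,\gamma)$ gives the clean factor $\|u(t)\|^{1/2}_{\dot B^{-1}_{\infty,\infty}}$.

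\textbf{Time integration and the main obstacle.} Finally I would raise the fixed-time inequality to the fourth power, write
\begin{align}
\int_a^b \|u(t)\|^4_{L^4(B(\gamma\rho))}\, dt \leq C(d,\gamma)^4 \|u\|^2_{L^\infty_t \dot B^{-1}_{\infty,\infty}} \int_a^b \|\chi u(t)\|^2_{\dot H^1}\, dt, \no
\end{align}
pull the $L^\infty_t$ Besov norm out, and bound $\int_a^b\|\chi u\|^2_{\dot H^1}\,dt$ by $\big(\|\nabla u\|_{L^2_t L^2_x(B(\rho))} + \tfrac{1}{(1-\gamma)\rho}\sqrt{b-a}\,\|u\|_{L^\infty_t L^2_x(B(\rho))}\big)^2$, which after a fourth root is precisely the right-hand side. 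I expect the main obstacle to be the localization step: ensuring the cutoff commutes acceptably with the paraproduct estimate of Proposition~\ref{besovestimate} and, more delicately, keeping the constant independent of $\rho$. The cleanest way to guarantee $\rho$-independence is to first rescale to $\rho=1$ using the scaling invariance of all three norms (the $\dot B^{-1}_{\infty,\infty}$ and $\dot H^1$ norms scale compatibly with the $L^4_{t,x}$ norm under the parabolic rescaling), carry out the localization on the unit ball where the cutoff constants are absolute, and then scale back; this is where the explicit $\sqrt{b-a}/\rho$ weight and the $\gamma$-dependence of $C$ transparently emerge.
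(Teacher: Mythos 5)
Your proposal is correct and follows essentially the same route as the paper: the paper likewise applies Proposition~\ref{lffestimate} with $p=4$, $q=2$, $\alpha=1$ (so $\dot B^1_{2,2}=\dot H^1$, $\theta=1/2$) to a cutoff $\phi_\rho u$ with $\phi_\rho(x)=\phi(x/\rho)$, controls $\|\phi_\rho u\|_{\dot B^{-1}_{\infty,\infty}}$ by Proposition~\ref{besovestimate} (the constant is $\rho$-independent because $\|\phi_\rho\|_{\dot B^1_{d,1}}$ is scale-invariant, which is the same point your rescaling step addresses), bounds $\|\phi_\rho u\|_{\dot H^1}$ by the Leibniz rule, and then integrates in time pulling out the $L^\infty_t$ Besov factor.
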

\begin{proof}
Choose $\phi \in C_0^{\infty}(\R^d)$ such that ${\rm{supp}}\, \phi \subset B(1)$, $0\leq \phi \leq 1$ and $\phi =1 $ on $B(\gamma)$. Set $\phi_{\rho}(x) = \phi(x/\rho)$, in view of
Proposition~\ref{lffestimate}, we see
\begin{align}
\|\phi_{\rho} u\|_{L^4(\R^d)} \leq C \|\phi_{\rho} u\|^{1/2}_{\dot B^{-1}_{\infty,\infty}(\R^d)}\|\phi_{\rho} u\|^{1/2}_{\dot H^1(\R^d)}.
\end{align}
Integrating in time, one can find
\begin{align} \label{spacetime}
\|\phi_{\rho} u\|_{L^4(a,b;L^4(\R^d))} \leq C \|\phi_{\rho} u\|^{1/2}_{L^{\infty}(a,b;\dot B^{-1}_{\infty,\infty}(\R^d))}\|\phi_{\rho} u\|^{1/2}_{L^2(a,b;\dot H^1(\R^d))}.
\end{align}
It is easy to see that
\begin{align} \label{h1estimate}
\|\phi_{\rho} u\|_{\dot H^1(\R^d)}\leq C \big(\rho^{-1}\|u\|_{L^2(B(\rho))} + \|\nabla u\|_{L^2(B(\rho))}\big).
\end{align}
On the other hand, applying Proposition~\ref{besovestimate}, we have
\begin{align}\label{scalebesov}
 \|\phi_{\rho} u\|_{\dot B^{-1}_{\infty,\infty}} &\leq C(\|\phi_{\rho}\|_{L^{\infty}}+ \|\phi_{\rho}\|_{\dot B^1_{d,1}}) \|u\|_{\dot B^{-1}_{\infty,\infty}} \no \\
 & \leq C(d,\gamma) \|u\|_{\dot B^{-1}_{\infty,\infty}}.
\end{align}
Inserting~\eqref{h1estimate} and~\eqref{scalebesov} into~\eqref{spacetime}, and noticing that
\begin{align}
\| u\|_{L^4(a,b;L^4(B(\gamma \rho)))} \leq \|\phi_{\rho} u\|_{L^4(a,b;L^4(\R^d))},
\end{align}
one can easily obtain the final result, as desired.
\end{proof}

Let $u, p$ be a pair of solution to~\eqref{ns},   we introduce some quantities involving  $u$ and $p$. Denote
\begin{align}
E_1 (r) &= \sup_{-r^2<t<0}\int_{B(r)}|u|^2 dx; \\
E_2(r) &= \int_{Q(r)} |\nabla u|^2 dxdt; \\
F(r) &= \int_{Q(r)} |u|\left||u|^2-[|u|^2]_{B(r)}\right| dxdt; \\
D(r) &= \int_{Q(r)} |u|^4 dxdt; \\
L(r) &=\int_{Q(r)} |u|\left|p-[p]_{B(r)}\right|dxdt; \\
K_{\alpha}(r) &= \int_{-r^2}^0 \left( \int_{B(r) } |p| dx \right)^{\alpha} dt.
\end{align}
 We remark that the above quantities follow from~\cite{CKN}, which are used for the control of the suitable weak solution $u$ in $Q(r)$. However, in \cite{CKN} they applied a version $F(r) = \int_{Q(r)} |u|^3 dxdt $ to show Theorem \ref{regularitycriterion} in 3D.   Noticing that for the suitable weak solution $u$ in $Q(r)$,  $\int_{Q(r)} u\cdot \nabla \varphi [|u|^2]_{B(r)} dxdt =0$, we have
\begin{align} \label{localerneimprov}
\int_{Q(r)} u\cdot \nabla \varphi |u|^2 dxdt = \int_{Q(r)} u\cdot \nabla \varphi (|u|^2-[|u|^2]_{B(r)}) dxdt
\end{align}
for the third term in the right hand side of local energy inequality \eqref{locaenerineq}. $u(|u|^2-[|u|^2]_{B(r)})$ enjoys a more delicate estimate than $|u|^3$, which is important for the estimates in higher spatial dimensions.

We are about to present two important lemmas in deriving Theorem~\ref{regularitycriterion}. Basically, they show how one can bound the right hand side of the local energy inequality.
\begin{lem} \label{estimateofcubic}
Let $d \geq 4$, then there exists a constant $C$, such that
\begin{align}
F(r) \leq C r^{2/d} E_1 (r)^{2/d} E_2 (r)^{1/2} D(r)^{1/2-1/d}.
\end{align}
\end{lem}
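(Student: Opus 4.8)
The plan is to exploit the oscillation structure of $v:=|u|^2-[|u|^2]_{B(r)}$, which is precisely what the excerpt isolates in \eqref{localerneimprov}: since $v$ has zero mean on $B(r)$ it is amenable to a Poincar\'e--Sobolev inequality, and its gradient satisfies $|\nabla v|\le 2|u|\,|\nabla u|$, so it brings in exactly one factor of $\nabla u$, which will eventually produce $E_2(r)^{1/2}$. First I would estimate the spatial integral at a fixed time. By H\"older on $B(r)$,
\[
\int_{B(r)}|u|\,|v|\,dx \le \|u\|_{L^4(B(r))}\,\|v\|_{L^{4/3}(B(r))}.
\]
Since $\int_{B(r)} v\,dx=0$, the scale-invariant Poincar\'e--Sobolev inequality on a ball gives $\|v\|_{L^{4/3}(B(r))}\le C\|\nabla v\|_{L^m(B(r))}$ with $\frac1m=\frac34+\frac1d$ (which satisfies $1\le m<d$ precisely because $d\ge4$, and in which the powers of $r$ cancel, so $C$ is independent of $r$); then H\"older applied to $|\nabla v|\le 2|u|\,|\nabla u|$ yields $\|\nabla v\|_{L^m}\le C\|u\|_{L^a(B(r))}\|\nabla u\|_{L^2(B(r))}$ with $\frac1a=\frac14+\frac1d$, i.e. $a=\frac{4d}{d+4}$.

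The key point that pins down the dimension restriction is that $2\le a\le 4$ \emph{exactly when} $d\ge 4$, so I can interpolate $\|u\|_{L^a}$ between $L^2$ and $L^4$: $\|u\|_{L^a}\le\|u\|_{L^2}^{4/d}\|u\|_{L^4}^{1-4/d}$. Combining the three estimates gives the pointwise-in-time bound
\[
\int_{B(r)}|u|\,|v|\,dx \le C\,\|u(t)\|_{L^2_x}^{4/d}\,\|u(t)\|_{L^4_x}^{\,2-4/d}\,\|\nabla u(t)\|_{L^2_x},
\]
where all spatial norms are over $B(r)$.

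Finally I would integrate in $t\in(-r^2,0)$. Bounding $\|u(t)\|_{L^2_x}^{4/d}\le E_1(r)^{2/d}$ pulls that factor out; H\"older in time with the conjugate pair $(2,2)$ separates $\|\nabla u\|_{L^2_x}$ (whose $L^2_t$ norm is $E_2(r)^{1/2}$) from $\|u\|_{L^4_x}^{2-4/d}$. The remaining factor $\big(\int_{-r^2}^0\|u\|_{L^4_x}^{4-8/d}\,dt\big)^{1/2}$ is handled by one more H\"older in time, with exponents $\frac{4}{4-8/d}$ and $\frac d2$, testing against the constant $1$ on the interval of length $r^2$; this produces exactly $r^{2/d}D(r)^{1/2-1/d}$. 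Multiplying the three contributions gives $F(r)\le C\,r^{2/d}E_1(r)^{2/d}E_2(r)^{1/2}D(r)^{1/2-1/d}$, as claimed.

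I expect the main obstacle to be purely the exponent bookkeeping: one must choose the H\"older pairing $(4,4/3)$ and the Sobolev exponent $m$ so that, after interpolating $\|u\|_{L^a}$ and after the two time integrations, the powers of $E_1$, $E_2$, $D$ and of $r$ all land on their prescribed values simultaneously. The only genuinely structural inputs are (i) the scale invariance of the Poincar\'e--Sobolev inequality on $B(r)$, which guarantees no spurious powers of $r$, and (ii) the observation already flagged after \eqref{localerneimprov}, namely that replacing $|u|^3$ by $|u|\,|v|$ is exactly what makes $\nabla u$ (hence $E_2$) available; without it the higher-dimensional estimate would not close.
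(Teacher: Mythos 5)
Your proof is correct: all exponents check out — the Sobolev--Poincar\'e embedding $W^{1,m}(B(r))\hookrightarrow L^{4/3}(B(r))$ with $1/m=3/4+1/d$ is scale-invariant and admissible ($m\geq 1$) precisely when $d\geq 4$, the interpolation exponent $\lambda=4/d$ is right, and the two H\"older steps in time produce exactly $r^{2/d}E_1(r)^{2/d}E_2(r)^{1/2}D(r)^{1/2-1/d}$. This is essentially the paper's own argument with the H\"older/Sobolev roles permuted: the paper pairs $|u|$ with the mean-zero factor in $L^{m'}\times L^{m}$, $1/m=3/4-1/d$, and raises the latter via $W^{1,4/3}\hookrightarrow L^{m}$, whereas you pair in $L^4\times L^{4/3}$ and lower the gradient exponent via $W^{1,m}\hookrightarrow L^{4/3}$; both routes collapse to the identical pointwise-in-time bound $\|u\|_{L^2}^{4/d}\|u\|_{L^4}^{2-4/d}\|\nabla u\|_{L^2}$ and the same time integration.
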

\begin{proof}
Let $m$ meet $1/m = 3/4 - 1/d $. By Sobolev's and Poincar\'e's inequalities, one sees
$$
\|f-[f]_{B(1)}\|_{L^m(B(1))} \leq C \|f-[f]_{B(1)}\|_{H^1_{4/3}(B(1))} \leq C \|\nabla f\|_{L^{4/3}(B(1))}.
$$
By scaling argument, we immediately have
\begin{align} \label{Poincare}
\|f-[f]_{B(r)}\|_{L^m(B(r))}  \leq C \|\nabla f\|_{L^{4/3}(B(r))}.
\end{align}
In view of H\"older's inequality and  \eqref{Poincare},
\begin{align} \label{2estimate1}
F(r) & \leq \int_{-r^2}^0 \|u\|_{L^{m'}(B(r))} \left\||u|^2-[|u|^2]_{B(r)} \right\|_{L^m(B(r))} dt \no \\
 & \leq C \int_{-r^2}^0 \|u\|_{L^{m'}(B(r))} \|\nabla |u|^2\|_{L^{4/3}(B(r))} dt  \no \\
 & \leq C \|u\|_{L_t^4L_x^{m'}(Q(r))} \|\nabla u \|_{L^2(Q(r))} \|u\|_{L^4(Q(r))}.
\end{align}
Using interpolation  inequality, we can find
\begin{align}
\|u\|_{L_x^{m'}(B(r))} \leq C \|u\|^{\theta}_{L_x^2(B(r))} \|u\|_{L_x^4(B(r))}^{1-\theta}, \ \ \theta = 4/d.
\end{align}
Integrating over time interval $(-r^2,0)$ and Using H\"older's inequality, one can see
\begin{align}\label{2estimate2}
\|u\|_{L_t^4L_x^{m'}(Q(r))} \leq C r^{\theta/2} \|u\|^{\theta}_{L_t^{\infty}L_x^2(Q(r))} \|u\|^{1-\theta}_{L^4(Q(r))}.
\end{align}
Inserting~\eqref{2estimate2} into~\eqref{2estimate1}, one can conclude the proof.
\end{proof}
With regard to the pressure $p$, one can observe that if $u, p$ satisfies NS distributionally on $\Omega \times (t_1,t_2)$ with $\Omega \subset \R^d$, then for a.e. $t\in (t_1,t_2)$,
\begin{align} \label{formulaofpre}
\Delta p = -\partial_i \partial_j (u^iu^j).
\end{align}
Here the summation convention over repeated indices is enforced. As in~\cite{CKN}, we localize $p$  to some bounded domain $\overline{\Omega'} \subset \Omega$. % and  we usually multiply $p$ by a smooth cut-off function, see e.g.~\cite{CKN}.
 let $\phi \in C_0^{\infty}(\Omega)$ be such that $\phi =1 $ on a neighborhood of $\overline{\Omega'}$, for $x\in \Omega'$, we have
\begin{align}
p = \phi p &= c_d \int_{\R^d} \frac{1}{|x-y|^{d-2}} \Delta_y (\phi p) dy \no \\
          & =c_d \int_{\R^d} \frac{1}{|x-y|^{d-2}}\big [ p\Delta_y \phi + \phi \Delta_y p + 2 \nabla_y \phi\cdot \nabla_y p \big] dy.
\end{align}
Putting~\eqref{formulaofpre} into the above formula and integrating by parts, one can obtain a useful expression for $\phi p$:
\begin{align}\label{expressionofpressure}
\phi p = \tilde{p} + p_3 + p_4,
\end{align}
where
\begin{align}
\tilde{p} &= c_d\int_{\R^d} \partial_{y_i} \bigg( \frac{1}{|x-y|^{d-2}}\bigg) \phi u^j \partial_{y_j} u^i  dy,  \no \\
p_3 &= -c_d(d-2) \int_{\R^d} \frac{x_j-y_j}{|x-y|^d} u^i u^j \partial_{y_i} \phi  dy -c_d \int_{\R^d} \frac{1}{|x-y|^{d-2}} u^i u^j \partial_{y_iy_j} \phi  dy, \no \\
p_4& = -c_d \int_{\R^d} \frac{1}{|x-y|^{d-2}} p\, \Delta_{y} \phi dy -2c_d(d-2) \int_{\R^d} \frac{x_j-y_j}{|x-y|^d} p\, \partial_{y_j} \phi  dy.
\end{align}

\begin{lem} \label{diffpressureestimate}
Let $d\geq 4,\, 0<r\leq   \rho/2$ and $1<\alpha<\infty $, then there exists  a constant $C$ depending on $d$, such that
\begin{align} \label{3estimate3}
L(r) & \leq Cr^{2/d}E_1 (r)^{2/d} E_2 (3r/2 )^{1/2} D (3r/2)^{1/2-1/d} \no \\
&\qquad + C r^{d/2+2} E_1 (r)^{1/2} \sup_{-r^2<t<0} \left(\int_{3r/2\leq|y|<\rho}  |u|^2 |y|^{-d} dy \right )^{1/2}   \left( \int_{-r^2}^0 \int_{3r/2 \leq|y|<\rho}  |\nabla u|^2 |y|^{-d}  dydt\right )^{1/2}  \no \\
& \qquad   + C \frac{r^{d/2+3}}{\rho^{d+1}} E_1 (r)^{1/2} A(\rho) + C\frac{r^{d/2+1+2/\alpha'}}{\rho^{d+1}} E_1 (r)^{1/2}K_{\alpha}(\rho)^{1/\alpha}.
\end{align}
\end{lem}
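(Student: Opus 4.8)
The plan is to start from the representation \eqref{expressionofpressure}, choosing the cut-off so that it is adapted to the two scales $r$ and $\rho$. Concretely, I would fix $\phi\in C_0^\infty(B(\rho))$ with $\phi\equiv 1$ on $B(3\rho/4)$, $0\le\phi\le1$, $|\nabla\phi|\lesssim\rho^{-1}$ and $|\nabla^2\phi|\lesssim\rho^{-2}$; since $r\le\rho/2$ forces $B(3r/2)\subset B(3\rho/4)$, we have $\phi\equiv1$ on $B(3r/2)$ while $\nabla\phi$ is supported in the shell $\{3\rho/4\le|y|\le\rho\}$. On $B(r)$ one then has $p=\tilde p+p_3+p_4$, so $p-[p]_{B(r)}$ splits accordingly and $L(r)$ is dominated by the three integrals $\int_{Q(r)}|u|\,|\tilde p-[\tilde p]_{B(r)}|$, $\int_{Q(r)}|u|\,|p_3-[p_3]_{B(r)}|$ and $\int_{Q(r)}|u|\,|p_4-[p_4]_{B(r)}|$. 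Two elementary facts will be used repeatedly: $\int_{Q(r)}|u|\,dxdt\lesssim r^{d/2+2}E_1(r)^{1/2}$ by Cauchy--Schwarz in $x$ and H\"older in $t$; and, whenever the $y$-integration avoids $B(3r/2)$, the kernels are smooth in $x\in B(r)$, so subtracting the mean over $B(r)$ gains a factor $r$ times the $x$-gradient of the kernel.

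For $\tilde p$ I would split the $y$-integral into the near region $B(3r/2)$, where $\phi\equiv1$, and the far region $\{3r/2\le|y|\le\rho\}$. On the near region $\tilde p$ is a Calder\'on--Zygmund transform of $u^j\partial_j u^i=\partial_j(u^iu^j)$ (using $\mathrm{div}\,u=0$); applying the Sobolev--Poincar\'e bound \eqref{Poincare} together with the interpolation scheme of Lemma~\ref{estimateofcubic} verbatim, now measuring $\nabla\tilde p$ on $B(3r/2)$ via $L^{4/3}$-boundedness of the singular integral and $\|u\nabla u\|_{L^{4/3}}\le\|\nabla u\|_{L^2}\|u\|_{L^4}$, produces exactly the first term $r^{2/d}E_1(r)^{2/d}E_2(3r/2)^{1/2}D(3r/2)^{1/2-1/d}$. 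On the far region the kernel $\partial_{y_i}(|x-y|^{2-d})$ is smooth in $x$ with $|\nabla_x(\cdot)|\lesssim|y|^{-d}$, so $\sup_{x\in B(r)}|\tilde p-[\tilde p]_{B(r)}|\lesssim r\int_{3r/2\le|y|\le\rho}|y|^{-d}|u||\nabla u|\,dy$; combining this with the bound on $\int_{Q(r)}|u|$, a weighted Cauchy--Schwarz in $y$ (writing $|y|^{-d}=|y|^{-d/2}\cdot|y|^{-d/2}$) and one further H\"older step in $t$ yields the second term.

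For $p_3$ and $p_4$ the derivatives of $\phi$ confine the $y$-integration to $\{|y|\sim\rho\}$, where again the kernels are smooth in $x\in B(r)$ and contribute $r\rho^{-d}$ after subtracting the mean, while $\nabla\phi$ and $\nabla^2\phi$ contribute $\rho^{-1}$ and $\rho^{-2}$. Hence $\sup_{x\in B(r)}|p_3-[p_3]_{B(r)}|\lesssim r\rho^{-d-1}\|u(t)\|_{L^2(B(\rho))}^2$, which after multiplying by $\int_{Q(r)}|u|$ and integrating in $t$ gives the third term with the energy $A(\rho)$; similarly $\sup_{x\in B(r)}|p_4-[p_4]_{B(r)}|\lesssim r\rho^{-d-1}\|p(t)\|_{L^1(B(\rho))}$, and a H\"older estimate in time with exponent $\alpha$ (costing $r^{2/\alpha'}$) converts $\int_{-r^2}^0\|p(t)\|_{L^1(B(\rho))}\,dt$ into $K_\alpha(\rho)^{1/\alpha}$, giving the fourth term. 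Summing the four contributions then finishes the proof.

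The main obstacle, I expect, is the treatment of $\tilde p$: one has to recognise that its near-field part is genuinely of the same nature as the cubic quantity $F(r)$ and push the Sobolev--Poincar\'e plus interpolation argument of Lemma~\ref{estimateofcubic} through the singular integral via its $L^{4/3}$-boundedness, while at the same time performing the near/far splitting and tracking, in the far field, the precise power of $r$ gained from the oscillation and the sharp weight $|y|^{-d}$. By comparison the $p_3,p_4$ pieces are routine far-field estimates once the cut-off is fixed; the only delicate point there is the bookkeeping of the scaling powers $\rho^{-d-1}$ and of the time-integrability exponent $\alpha$.
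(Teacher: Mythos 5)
Your proposal is correct and follows essentially the same route as the paper's proof: the same cut-off adapted to the two scales, the same near/far splitting of $\tilde p$ into the paper's $p_1$ (handled via Poincar\'e, $L^{4/3}$-boundedness of the Calder\'on--Zygmund operator, and the interpolation scheme of Lemma~\ref{estimateofcubic}) and $p_2$ (handled via the mean-oscillation gain of a factor $r$ against the kernel bound $|y|^{-d}$, followed by weighted Cauchy--Schwarz in $y$ and H\"older in $t$), and the same far-field treatment of $p_3$, $p_4$ with the H\"older-in-time step producing $r^{2/\alpha'}K_{\alpha}(\rho)^{1/\alpha}$. All of your exponent bookkeeping matches the paper's estimates \eqref{pressure1}--\eqref{pressure4}.
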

\begin{proof}
We use expression~\eqref{expressionofpressure} for $p$, where $\phi$ is chosen as follows:
\begin{align}
\phi = 1 \ \ &\textrm{on} \ \ B(3\rho/4),  \ \ \ \textrm{supp}\, \phi \subset B(\rho), \\
|\nabla \phi| &\leq C \rho^{-1}, \ \ \quad  |\nabla^2 \phi| \leq C\rho^{-2}.
\end{align}
Also, we further decompose $\tilde{p}$ into $\tilde{p} =  p_1 + p_2$, with
\begin{align}
p_1 &= c_d\int_{|y|< \frac{3}{2}r} \partial_{y_i} \bigg( \frac{1}{|x-y|^{d-2}}\bigg) \phi u^j \partial_{y_j} u^i  dy;  \label{p1express}\\
p_2 &= c_d\int_{\frac{3}{2}r \leq |y|<\rho} \partial_{y_i} \bigg( \frac{1}{|x-y|^{d-2}}\bigg) \phi u^j \partial_{y_j} u^i  dy.
\end{align}
Hence
\begin{align}
|p-[p]_{B(r)}| \leq \sum_{i=1}^4 |p_i - [p_i]_{B(r)}|.
\end{align}

For convenience, we denote
$$
L_i(r) =  \int_{Q(r)} |u|\left|p_i-[p_i]_{B(r)}\right|dxdt, \ i=1,...,4.
$$
For $x\in B(r)$, it can be easily verified that
\begin{align}
|\nabla p_2| &\leq C_d \int_{\frac{3}{2}r \leq |y|<\rho} |u||\nabla u|\frac{dy}{|y|^{d}},  \label{gradp2}\\
|\nabla p_3| &\leq \frac{C_d}{\rho^{d+1}} \int_{B(\rho)} |u|^2 dy, \ \ \ |\nabla p_4| \leq \frac{C_d}{\rho^{d+1}} \int_{B(\rho)} |p|dy. \label{gradp3}
\end{align}
On the other hand,
\begin{align}
T_{ij}(\psi) := \partial_{ij}\big(|x|^{-(d-2)} \big)* \psi \label{riesz}
\end{align}
is a Calderon-Zygmund operator, which is bounded from $L^p(\R^d)$ to itself for $1<p<\infty$. Let $m$ be
such that $1/m = 3/4 - 1/d$, we have from H\"older's inequality, \eqref{Poincare}, \eqref{p1express} that
\begin{align}
L_1(r) &\leq \int_{-r^2}^0 \|u\|_{L^{m'}(B(r))} \|p_1-[p_1]_{B(r)}\|_{L^m(B(r))} dt \no \\
& \leq C \int_{-r^2}^0 \|u\|_{L^{m'}(B(r))} \|\nabla p_1\|_{L^{4/3}(B(r))} dt  \no \\
& \leq C \int_{-r^2}^0 \|u\|_{L^{m'}(B(r))} \|u\nabla u\|_{L^{4/3}(B(3r/2))} dt.
\end{align}
Therefore, one can argue as Lemma~\ref{estimateofcubic} to obtain that
\begin{align} \label{pressure1}
L_1(r)  \leq C r^{2/d} \|u\|^{4/d}_{L_t^{\infty}L_x^2(Q(r))} \|\nabla u\|_{L^2(Q(3r/2))} \|u\|^{2-4/d}_{L^4(Q(3r/2))}.
\end{align}
For the estimate of $L_2(r)$, in view of H\"older's inequality, \eqref{gradp2} and mean value theorem, we have
\begin{align}
L_2(r) &\leq C r \int_{-r^2}^0 |\nabla p_2|_{L^{\infty}(B(r))}\bigg(\int_{B(r)}|u| dx \bigg)dt  \label{meanvalue} \\
& \leq C r^{\frac{d}{2}+1} \|u\|_{L_t^{\infty}L_x^2(Q(r))} \int_{-r^2}^0 \int_{\frac{3}{2}r \leq |y|<\rho} \frac{1}{|y|^d} |u||\nabla u| dy dt. \no
\end{align}
Using H\"older inequality again, one can see
\begin{align} \label{pressure2}
L_2(r)
& \leq C r^{\frac{d}{2}+2}\|u\|_{L_t^{\infty}L_x^2(Q(r))} \sup_{-r^2<t<0} \left(\int_{\frac{3}{2}r\leq|y|<\rho} \frac{1}{|y|^d} |u|^2 dy \right )^{\frac{1}{2}}  \no \\
&  \qquad    \times \left( \int_{-r^2}^0 \int_{\frac{3}{2}r\leq|y|<\rho} \frac{1}{|y|^d} |\nabla u|^2  dydt\right )^{\frac{1}{2}}.
\end{align}
To estimate $L_3(r)$, from \eqref{meanvalue}, H\"older's inequality  and \eqref{gradp3} it follows that
\begin{align} \label{pressure3}
L_3(r) &\leq C r \int_{-r^2}^0 |\nabla p_3|_{L^{\infty}(B(r))} \bigg(\int_{B(r)}|u| dx \bigg) dt  \no \\
 & \leq \frac{Cr}{\rho^{d+1}} \int_{-r^2}^0 \bigg(\int_{B(r)}|u|dx \bigg) \bigg(\int_{B(\rho)} |u|^2 dy\bigg) dt  \no  \\
 & \leq \frac{Cr^{\frac{d}{2}+3}}{\rho^{d+1}} \|u\|_{L_t^{\infty}L_x^2(Q(r))} \|u\|^2_{L_t^{\infty}L_x^2(Q(\rho))}.
\end{align}
Finally, for the estimate of $L_4(r)$, Using \eqref{meanvalue}, H\"older's inequality and \eqref{gradp3}, one has that
\begin{align} \label{pressure4}
L_4(r) & \leq C r \int_{-r^2}^0 |\nabla p_4|_{L^{\infty}(B(r))}\bigg(\int_{B(r)}|u| dx\bigg) dt \no \\
 & \leq C \frac{r}{\rho^{d+1}} \int_{-r^2}^0 \bigg(\int_{B(r)} |u| dx\bigg) \bigg(\int_{B(\rho)} |p| dy \bigg)dt \no \\
 & \leq C \frac{r^{d/2+1}}{\rho^{d+1}} \|u\|_{L_t^{\infty}L_x^2(Q(r))} \int_{-r^2}^0 \|p\|_{L^1(B(\rho))} dt  \no \\
 & \leq C\frac{r^{d/2+1+2/\alpha'}}{\rho^{d+1}} \|u\|_{L_t^{\infty}L_x^2(Q(r))} \|p\|_{L_t^{\alpha}L_x^1(Q(\rho))}.
\end{align}
Noticing that
$
L(r) \leq \sum_{i=1}^4 L_i(r)
$
and combining the results~\eqref{pressure1}, \eqref{pressure2}, \eqref{pressure3} and~\eqref{pressure4}, we can get the desired result.
\end{proof}

%%%%%%%%%%%%%%%%%%%%%%%%%%%%%%%%%%%%%%%%%%%%%%%%%%%%%%%%%%%%%%%%%%%%%%%%%%%%%%%%%%%%%%%%%%%%%%%%%%%%%%%%%%%%%%%%%%%%%%%%%%%%%%%%%%%%%%
%%%%%%%%%%%%%%%%%%%%%%%%%%%%%%%%%%%%%%%%%%%%%%%%%%%%%%%%%%%%%%%%%%%%%%%%%%%%%%%%%%%%%%%%%%%%%%%%%%%%%%%%%%%%%%%%%%%%%%%%%%%%%%%%%%%%%%
%%%%%%%%%%%%%%%%%%%%%%%%%%%%%%%%%%%%%%%%%%%%%%%%%%%%%%%%%%%%%%%%%%%%%%%%%%%%%%%%%%%%%%%%%%%%%%%%%%%%%%%%%%%%%%%%%%%%%%%%%%%%%%%%%%%%%%

\section{$\epsilon$-regularity criterion } \label{pfepsilonregu}

In this section, we will use the same strategy as that in~\cite{CKN} to prove Theorem~\ref{regularitycriterion}, performing an induction on the local energy.  In fact, under the assumption of Theorem~\ref{regularitycriterion}, we shall show  that for each $z_0:=(a,s) \in Q(1/2)$,
\begin{align} \label{3estimate1}
\fint_{|x-a|<r_n} |u|^2(x,s) dx \leq C \epsilon_1, \ \ r_n =2^{-n}, \ \ \forall\, n \geq 2.
\end{align}
where $C$ is a constant that will be chosen suitably in our proof. Additionally, assume $z_0$ is a Lebesgue point for $u$, then~\eqref{3estimate1} implies
\begin{align} \label{boundedquan}
|u|^2(a,s) \leq C \epsilon_1,
\end{align}
hence almost everywhere in $Q(1/2)$.

Due to the translation invariance of the NS equation and the hypothesis in Theorem~\ref{regularitycriterion}, one can assume $z_0 = 0$ in the sequel.
To show~\eqref{3estimate1}, we will prove inductively that
%$$
%{(I)_n} \quad  \left\{
%\begin{array}{rcl}
%\frac{1}{r_n^{d+1+2/d}} \int_{Q(r_n)}|u|||u|^2-[|u|^2]_{B(r_n)}|dxdt & \leq & \epsilon_1, \ \ n \geq 1, \\
%\frac{1}{r_n^{d+1+\gamma_{d,\alpha}}}\int_{Q(r_n)}|u||p-[p]_{B(r_n)}|dxdt &\leq& \epsilon_1, \ \ n \geq 3.
%\end{array}
%\right.
%$$
\begin{align}
  (I)_n: & \quad \ \  \frac{1}{r_n^{d+1+2/d}} F(r_n) + \frac{1}{r_n^{d+1+\gamma_{d,\alpha}}} L(r_n) \leq \epsilon_1, \ \ \  n\geq 3,  \label{localFL}\\
(R)_n: &  \quad \ \ \ E_1(r_n) + E_2(r_n) \leq C\epsilon_1 r_n^d,  \ \ n\geq 2,  \label{localenergybound}
\end{align}
where $\gamma_{d,\alpha} = \min\{ 2/d, 2/\alpha'  \}$,  $C$ is a constant depending on $d,\, M$ and $\alpha$. Clearly, \eqref{localenergybound} implies~\eqref{3estimate1} with $(a,s)= (0,0)$. Next, we  show the validity of $(I)_n$ and $(R)_n$.

\begin{proof}[Proof of Theorem~\ref{regularitycriterion}] We will use the following way to show the results of $(I)_n$ and $(R)_n$:  (1) We show that $(R)_2$ holds; (2) $(R)_k$  holds for $2\leq k\leq n$ implies that $(I)_{n+1}$; (3) $(I)_k$ holds for $3 \leq k \leq n$ implies that $(R)_n$.  Then by induction we have \eqref{localFL} and \eqref{localenergybound}.

{\it Step 1}. We prove that $(R)_2$ holds.  Recalling that our hypotheses are
\begin{align}
\sup_{-1<t<0}\int_{B(1)} |u|^2 dx + &\int_{Q(1)}(|\nabla u|^2+|u||p|) dxdt + \int_{-1}^0 \Big(\int_{B(1)}|p|dx  \Big)^{\alpha} dt \leq \epsilon_1,  \label{smallconst} \\
& \|u\|_{L^{\infty}(-1,0;\dot B^{-1}_{\infty,\infty})} \leq M < \infty.
\end{align}
As a priori, assume $\epsilon_1 \leq 1$.  Apparently, for $C \geq r_2^{-d}$, one has that
\begin{align}
E_1 (r_2) + E_2 (r_2) \leq \epsilon_1 \leq C\epsilon_1 r_2^d,
\end{align}

{\it Step 2}. For all $n \geq 2$, we show that $(R)_k$ holds for $2\leq k\leq n$ implies the result of $(I)_{n+1}$. Note that our inductive hypothesis is
\begin{align}
E_1 (r_k) + E_2 (r_k) \leq C\epsilon_1 r_k^d, \ \ \forall\, 2\leq k \leq n.
\end{align}
One can easily see $E_1 (s_1) \leq E_1 (s_2)$ provided $0< s_1 \leq s_2$ and the same holds for $E_2 (r)$.   For the first term in $(I)_{n+1}$, by Lemma~\ref{estimateofcubic} and Proposition~\ref{stlffestimate}(set $\gamma = 1/2, \rho =r_{n}$), we have for any $n \geq 2$,
\begin{align} \label{cubicestimate1}
F(r_{n+1}) & \leq C r_{n+1}^{\frac{2}{d}} E_1 (r_{n+1})^{\frac{2}{d}} E_2 (r_{n+1})^{\frac{1}{2}} D(r_{n+1})^{\frac{1}{2}-\frac{1}{d}} \no \\
& \leq C r_n^{\frac{2}{d}}(C\epsilon_1 r_n^d)^{\frac{2}{d}}(C\epsilon_1 r_n^d)^{\frac{1}{2}}(C\epsilon_1 r_n^d)^{\frac{1}{2}-\frac{1}{d}} \no \\
& \leq C r_n^{d+1+\frac{2}{d}} \epsilon_1^{1+\frac{1}{d}}.
\end{align}
%In fact, \eqref{cubicestimate1} also holds for $n \geq 0$.
Selecting $\epsilon_1$  sufficiently small, say
\begin{align} \label{epscondition1}
C \epsilon_1^{1/d} \leq 1/2^{d+2+2/d},
\end{align}
One has that
$$
F(r_{n+1})\leq r_{n+1}^{d+1+\frac{2}{d}} \epsilon_1/2.
$$
Concerning the second term in $(I)_{n+1}$, we will utilize Lemma~\ref{diffpressureestimate}, set $r= r_{n+1},\, \rho = 1/4,\, n \geq 2$ there,  one can  deduce that
\begin{align} \label{3estimate4}
& Cr_{n+1}^{\frac{2}{d}}E_1 (r_{n+1})^{2/d} E_2  (3 r_{n+1}/2 )^{1/2} D (3 r_{n+1}/2 )^{1/2-1/d} \no\\
& \quad \leq Cr_{n}^{2/d} (C\epsilon_1 r_n^d)^{2/d} (C\epsilon_1 r_n^d)^{1/2} (C\epsilon_1 r_n^d)^{1/2-1/d}
  \leq C r_n^{d+1+2/d} \epsilon_1^{1+1/d}.
\end{align}
We point out that in the first inequality, Proposition~\ref{stlffestimate} is used. In addition,
\begin{align}
 &\sup_{-r_{n+1}^2<t<0} \int_{\frac{3}{2}r_{n+1}\leq |y|<1/4} |y|^{-d} |u|^2 dy  \no  \\
 &\leq \sup_{-r_{n+1}^2<t<0}\int_{\frac{3}{2}r_{n+1}\leq |y|<r_n}|y|^{-d} |u|^2 dy +
 \sum_{k=2}^{n-1} \sup_{-r_{n+1}^2<t<0}\int_{r_{k+1}\leq |y|<r_k} |y|^{-d} |u|^2 dy  \no \\
 &\leq C r_n^{-d} \sup_{-r_n^2<t<0}\int_{|y|<r_n} |u|^2 dy + C\sum_{k=2}^{n-1} r_k^{-d}\sup_{-r_k^2<t<0}\int_{|y|<r_k} |u|^2 dy \no \\
 & \leq C n\epsilon_1.
 \end{align}
Similarly,
\begin{align}
\int_{-r_{n+1}^2}^0 \int_{\frac{3}{2}r_{n+1}\leq |y|<1/4} |y|^{-d} |\nabla u|^2 dydt \leq Cn\epsilon_1.
\end{align}
Consequently,
\begin{align} \label{3esitmate5}
& C r_{n+1}^{d/2+2} E_1 (r_{n+1})^{1/2} \sup_{-r_{n+1}^2<t<0} \left(\int_{\frac{3}{2}r_{n+1}\leq|y|<1/4} |y|^{-d} |u|^2 dy \right )^{1/2}   \left( \int_{-r_{n+1}^2}^0 \int_{\frac{3}{2}r_{n+1}\leq|y|<1/4} |y|^{-d} |\nabla u|^2  dydt\right)^{1/2} \no \\
& \quad \leq C r_n^{d/2+2}(C\epsilon_1 r_n^d)^{1/2} Cn\epsilon_1
 \leq Cn r_n^{d+2} \epsilon_1^{3/2}.
\end{align}
For the last two terms in~\eqref{3estimate3}, we have
\begin{align}
C 4^{d+1} r_{n+1}^{\frac{d}{2}+3} E_1 (r_{n+1})^{\frac{1}{2}} E_1 (1/4)  \leq C r_{n}^{\frac{d}{2}+3}(C\epsilon_1r_n^{d})^{\frac{1}{2}} C \epsilon_1
 \leq C r_n^{d+3} \epsilon_1^{3/2}
\end{align}
and
\begin{align} \label{3esitmate6}
C 4^{d+1} r_{n+1}^{\frac{d}{2}+1+\frac{2}{\alpha'}}E_1 (r_{n+1})^{\frac{1}{2}}K_{\alpha}(1/4)^{\frac{1}{\alpha}} &\leq C r_n^{\frac{d}{2}+1+\frac{2}{\alpha'}} (C\epsilon_1 r_n^d)^{\frac{1}{2}} (C \epsilon_1)^{\frac{1}{\alpha}} \no \\
& \leq C r_n^{d+1+\frac{2}{\alpha'}} \epsilon_1^{1/2+1/\alpha}.
\end{align}
Noticing that $1<\alpha < 2$, we can obtain from~\eqref{3estimate4}, \eqref{3esitmate5}-\eqref{3esitmate6} that
\begin{align}
L(r_{n+1}) \leq C r_{n+1}^{d+1+ \gamma_{d,\alpha}} \epsilon_1^{1+\theta_{d,\alpha}},
\end{align}
with
\begin{align}
\gamma_{d,\alpha} = \min\Big\{ \frac{2}{d}, \frac{2}{\alpha'}\Big \}, \ \ \
\theta_{d,\alpha} = \min\Big\{\frac{1}{d}, \frac{1}{\alpha}-\frac{1}{2} \Big\}.
\end{align}
Now taking $\epsilon_1$ small enough, such that
\begin{align} \label{choiceep2}
C \epsilon_1^{\theta_{d,\alpha}} \leq 1/2.
\end{align}
So $(I)_{n+1}$ follows.

{\it Step 3}. Assuming that  $n\geq 3$, $(I)_k$ holds for $3\leq k \leq n$, we show the result of $(R)_n$. Recall the local energy inequality
\begin{align} \label{localenergyspe}
\int_{B(1)} |u(x,t)|^2 \phi_n(x,t) dx + &2\int_{-1}^t \int_{B(1)}|\nabla u|^2 \phi_n(x,\tau) dxd\tau   \no \\
&\leq  \int_{-1}^t \int_{B(1)} |u|^2(\partial_t \phi_n + \Delta \phi_n) +
u\cdot \nabla \phi_n(|u|^2 + 2p) dxd\tau
\end{align}
holds for all $t\in (-1,0)$ and $0\leq \phi_n \in C_0^{\infty}(Q(1))$. In particular, we choose $\phi_n = \chi \varphi_n$, with $\chi \in C_0^{\infty}(Q(1/3))$, $0\leq \chi \leq 1$ and $\chi= 1$ on $Q(1/4)$,
\begin{align}
\varphi_n(x,t) = \frac{1}{(r_n^2-t)^{\frac{d}{2}}} \exp{\bigg\{-\frac{|x|^2}{4(r_n^2 -t)}\bigg\}}.
\end{align}
Obviously, $\varphi_n$ differs with the  backward heat kernel by a constant and $\phi_n \geq 0$. Now one can show via a direct calculation that
\begin{itemize}
\item  $\partial_t \phi_n + \Delta \phi_n \leq C$ \ \ for all $(x,t) \in Q(1)$.

\item  $c r_n^{-d}\leq \phi_n \leq C r_n^{-d}$, \ \ \ $|\nabla \phi_n | \leq C r_n^{-(d+1)}$ \ \ on\ \ $Q(r_n)$, \ \ $n \geq 2$.

\item $\phi_n \leq C r_k^{-d}$, \ \ \ $|\nabla \phi_n| \leq C r_k^{-(d+1)}$ \ \ \ on \ \  $Q(r_{k-1})\backslash Q(r_k)$, \ \  $1<k \leq n$.

\end{itemize}
for some constant $c,\ C$ depending only on $d$. It follows from~\eqref{localenergyspe} that
\begin{align}
\sup_{-r_n^2 <t<0}\fint_{B(r_n)} |u(x,t)|^2 dx + r_n^{-d} \int_{Q(r_n)} |\nabla u|^2 dxd\tau  \leq C (I + II + III),
\end{align}
where
\begin{align}
&\qquad \qquad  I = \int_{Q(1)} |u|^2 (\partial_t \phi_n + \Delta \phi_n) dxd\tau, \no \\
& II =  \int_{Q(1)} |u|^2 (u\cdot \nabla \phi_n)  dxd\tau , \ \quad   III= \int_{Q(1)} p (u\cdot \nabla \phi_n)  dxd\tau.  \no
\end{align}
Thus we are reduced to discuss the above three terms, one can readily get
\begin{align}
I \leq C\int_{Q(r_1)} |u|^2 dxdt \leq C \epsilon_1.
\end{align}
The estimate of $II$ and $III$ is a bit complicated, nevertheless goes in a similar way, both fully exploit the divergence free condition of the solution $u$.  Let $\eta_k,\, k=1,\ldots,n $ be smooth cut-off functions, satisfying
\begin{align}
\textrm{supp}\,\eta_k \subset Q(r_k), \ \ \ \ 0 \leq \eta_k  \leq 1,  \no \\
\eta_k =1 \ \ \textrm{on} \ \ Q(7r_k/8), \ \ \ \  |\nabla \eta_k|\leq cr_k^{-1}.
\end{align}
By a direct computation, one can  see
$$\eta_1 \phi_n = \phi_n, \ \ \ |\nabla ((\eta_{k-1}-\eta_k) \phi_n)| \leq C r_{k-1}^{-(d+1)}, \ \ k=2,\ldots,n.  $$
 Therefore
\begin{align}
II&= \int_{Q(r_1)} |u|^2  u\cdot \nabla (\eta_1 \phi_n) dxdt \no  \\
&= \sum_{k=2}^n \int_{Q(r_{k-1})} |u|^2 u\cdot \nabla ((\eta_{k-1}-\eta_k) \phi_n)  dxdt + \int_{Q(r_n)} |u|^2 u\cdot \nabla (\eta_n \phi_n)  dxdt. \no
\end{align}
By means of the argument that results in~\eqref{cubicestimate1}, one can show
\begin{align}
\int_{Q(r_{k-1})} \left||u|^2-[|u|^2]_{B(r_{k-1})}\right| |u| dxdt \leq C r^{1+d+2/d}_{k-1} \epsilon_1^{1+1/d}, \ \ \ k=2,\,3.
\end{align}
Due to the fact that  ${\rm{div}}\,u = 0$ and the hypothesis in Step $III$,  we can assert that for $2 \leq k \leq  n$,
\begin{align}
\int_{Q(r_{k-1})} |u|^2 u\cdot \nabla ((\eta_{k-1}-\eta_k) \phi_n)  dxdt &= \int_{Q(r_{k-1})} (|u|^2-[|u|^2]_{B(r_{k-1})}) u\cdot \nabla ((\eta_{k-1}-\eta_k) \phi_n)  dxdt  \no  \\
&\leq C \frac{1}{r_{k-1}^{d+1}}\int_{Q(r_{k-1})} \left||u|^2-[|u|^2]_{B(r_{k-1})} \right| |u| dxdt  \no \\
& \leq C r_{k-1}^{2/d} \epsilon_1.
\end{align}
Similarly
\begin{align}
\int_{Q(r_n)} |u|^2 u\cdot \nabla (\eta_n \phi_n)  dxdt \leq C r_n^{2/d} \epsilon_1.
\end{align}
This implies
\begin{align}
II \leq C \sum_{k=2}^n r_{k-1}^{2/d} \epsilon_1 + C r_n^{2/d} \epsilon_1  \leq C \epsilon_1.
\end{align}
Finally, we treat $III$, as before,
\begin{align}
III= \sum_{k=2}^n \int_{Q(r_{k-1})} p  u\cdot \nabla ((\eta_{k-1}-\eta_k) \phi_n)  dxd\tau + \int_{Q(r_n)} p u\cdot \nabla (\eta_n \phi_n) dxd\tau. \no
\end{align}
When $k = 2,3$, it follows from~\eqref{smallconst} that
\begin{align}
\int_{Q(r_{k-1})} p  u\cdot \nabla ((\eta_{k-1}-\eta_k) \phi_n)  dxd\tau \leq C \int_{Q(1)} |u||p| dxdt \leq C \epsilon_1.
\end{align}
While for $4 \leq k \leq n$, we have
\begin{align}
\int_{Q(r_{k-1})} p  u\cdot \nabla ((\eta_{k-1}-\eta_k) \phi_n)  dxd\tau & = \int_{Q(r_{k-1})} (p-[p]_{B(r_{k-1})})u\cdot \nabla ((\eta_{k-1}-\eta_k) \phi_n)  dxd\tau \no \\
& \leq C \frac{1}{r_{k-1}^{d+1}} \int_{Q(r_{k-1})}\left|p-[p]_{B(r_{k-1})}\right||u| dxdt  \no \\
& \leq C r_{k-1}^{\gamma_{d,\alpha}} \epsilon_1.
\end{align}
In the same way,
\begin{align}
\int_{Q(r_n)} p u\cdot \nabla (\eta_n \phi_n) dxd\tau  \leq C r_n^{\gamma_{d,\alpha}} \epsilon_1.
\end{align}
So one can find
\begin{align}
III  \leq C\epsilon_1 + \sum_{k=4}^n r_{k-1}^{\gamma_{d,\alpha}}\epsilon_1 + C r_n^{\gamma_{d,\alpha}} \epsilon_1 \leq C \epsilon_1.
\end{align}
Gathering the estimates of $I,\, II$ and $III$, we finally obtain that $(R)_n$ holds, which is exactly the required result.  The proof is done.
\end{proof}

We mention a bit more on the choice of $\epsilon_1$. By a closer observation, one can figure out that various constants $C$ appearing in the course of  Step $III$  relies on $d$, $M$ and $\alpha$ only. The same applies for the  constants $C$ in~\eqref{epscondition1} and~\eqref{choiceep2},  we can specify $\epsilon_1$ through~\eqref{epscondition1} and~\eqref{choiceep2}. %In such a way, we don't have the risk of circular reasoning.

To show  Corollary~\ref{modifiedversionregu},  we need to control the local energy in terms of $\int_{Q(3r/4)}|u|^3dxdt$ and $\int_{Q(3r/4)} |p|^{3/2} dxdt$. First, assume $u \in L^{\infty}(-4,0; \dot B^{-1}_{\infty,\infty})$ and  $u, p$ is a pair of  suitable weak solution on $Q(2)$, applying the local energy inequality with  test function $\phi$ satisfying ${\rm supp}\,\phi \subset Q(3/2)$ and $\phi = 1$ on $Q(1)$, we can find
\begin{align} \label{controllocal}
& \sup_{-1<t<0}\int_{B(1)}|u|^2 dx + \int_{Q(1)} |\nabla u|^2 dxdt  \no \\
&\ \ \leq C \bigg( \int_{Q(3/2)}|u|^2 +|u|^3 + |u||p| dxdt \bigg) \no \\
& \ \ \leq C\bigg( \int_{Q(3/2)}|u|^3 dxdt \bigg )^{2/3} + C\bigg( \int_{Q(3/2)} |u|^3 + |p|^{3/2} dxdt \bigg).
\end{align}
In general, for $u,\,p$ defined on $Q(r)$, we set
\begin{align}
u_r(x,t) &= \lambda u\big(\lambda x, \lambda^2 t\big), \ \ \ \ \lambda= r/2.   \no  \\
p_r(x,t) &= \lambda^2 p\big(\lambda x, \lambda^2 t \big).
\end{align}
As such, $u_r,\, p_r$ become functions defined on $Q(2)$.
\begin{proof}[Proof of Corollary~\ref{modifiedversionregu}]
Let $u_r,\,p_r$ be as above, by~\eqref{controllocal} and H\"older inequality, we have
\begin{align}
& \sup_{-1<t<0}\int_{B(1)} |u_r|^2 dx + \int_{Q(1)}(|\nabla u_r|^2+|u_r||p_r|) dxdt + \int_{-1}^0 \bigg(\int_{B(1)}|p_r|dx \bigg)^{3/2} dt  \no  \\
& \leq C \bigg( \int_{Q(3/2)}|u_r|^3 dxdt \bigg )^{2/3} + C\bigg( \int_{Q(3/2)} |u_r|^3 + |p_r|^{3/2} dxdt \bigg) \no \\
& \leq C \bigg( \frac{1}{r^{d-1}}\int_{Q(3r/4)}|u|^3 dxdt \bigg )^{2/3} + C\bigg( \frac{1}{r^{d-1}}\int_{Q(3r/4)}|u|^3  + |p|^{3/2} dxdt \bigg) \no \\
& \leq C (\tilde{\epsilon_1})^{3/2} + C \tilde{\epsilon_1}.
\end{align}
Selecting $\tilde{\epsilon_1}$ small enough, such that
\begin{align}
C (\tilde{\epsilon_1})^{3/2}+ C \tilde{\epsilon_1} \leq \epsilon_1.
\end{align}
Here $\epsilon_1$ is given by~\eqref{smallhypo}  with $\alpha = 3/2$. 
Then Theorem~\ref{regularitycriterion} can infer
\begin{align}
\sup_{(x,t)\in Q(1/2)} |u_r(x,t)| \leq C \epsilon_1.
\end{align}
This concludes the proof.
\end{proof}

%%%%%%%%%%%%%%%%%%%%%%%%%%%%%%%%%%%%%%%%%%%%%%%%%%%%%%%%%%%%%%%%%%%%%%%%%%%%%%%%%%%%%%%%%%%%%%%%%%%%%%%%%%%%%%%%%%%%%%%%%%%%%%%%%%%%%
%%%%%%%%%%%%%%%%%%%%%%%%%%%%%%%%%%%%%%%%%%%%%%%%%%%%%%%%%%%%%%%%%%%%%%%%%%%%%%%%%%%%%%%%%%%%%%%%%%%%%%%%%%%%%%%%%%%%%%%%%%%%%%%%%%%%%
%%%%%%%%%%%%%%%%%%%%%%%%%%%%%%%%%%%%%%%%%%%%%%%%%%%%%%%%%%%%%%%%%%%%%%%%%%%%%%%%%%%%%%%%%%%%%%%%%%%%%%%%%%%%%%%%%%%%%%%%%%%%%%%%%%%%%

\section{Mild solution in critical Besov space}  \label{proofregu}
This section is devoted to proving Theorem~\ref{mildblowupbesov}, and we  argue by contradiction. Assume that the conclusion of Theorem~\ref{mildblowupbesov} does not hold, i.e., there exists $M>0$ such that
\begin{align} \label{upperbd}
  \|u(t)\|_{\dot B^{s_p}_{p,q}(\R^d)}  \leq M, \ \ a.e. \ t\in [0,T_*).
\end{align}
\subsection{Formation of singular point at blowup time}
We shall show the existence of  singular point for blowup mild solution in critical Besov space under an extra regularity assumption, the key part lies in establishing some global space-time bounds for the solution until the singular time. Let us first recall the local Cauchy theory for NS with initial data in $\dot B^{s_p}_{p,q}$, see~\cite{albritton16} for the 3D case and the higher dimensional cases are similar.
\begin{thm} \label{subcribesov}
Let $u_0 \in \dot B^{s}_{p, q}(\R^d)$ with $d<p,\,q < \infty$, $s_p \leq s <0$,  Then there exist a time $T>0$ and a unique mild solution
$ u:= NS(u_0) \in C([0,T]; \dot B^s_{p,q}) \cap \mathscr{K}^s_{p,\infty}(T)$ to~\eqref{ns}, such that
\begin{align} \label{localexistencebdd}
  \|\partial^l_t \nabla^j u\|_{\mathscr{K}^{s-2l-j}_{p,\infty}(T)}+ \|u\|_{L^{\infty}(0,T;\dot B^{s}_{p,q})} \leq C \|u_0\|_{\dot B^s_{p,q}}, \ \ \ l,\,j \in \{0,1\}.
\end{align}
Moreover, we can take $T \geq c_0 \|u_0\|^{2/(s-s_p)}_{\dot B^s_{p,q}}$, provided $s>s_p$, here $c_0$ is independent of $u_0$. Recall that
$$
\|u\|_{\mathscr{K}^{\beta}_{p,\infty}(T)}:= \sup_{0<t<T} t^{-\beta/2}\|u(x,t)\|_{L_x^p}, \ \ \beta<0.
$$
\end{thm}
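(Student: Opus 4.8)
The plan is to run a Picard (Kato--Fujita) fixed-point scheme for the Duhamel formulation
\begin{align*}
u = e^{t\Delta}u_0 - B(u,u), \qquad B(u,v)(t) = \int_0^t e^{(t-\tau)\Delta}\mathbb{P}\ \textrm{div}\,(u\otimes v)(\tau)\,d\tau,
\end{align*}
in the Kato space $\mathscr{K}^s_{p,\infty}(T)$; throughout I abbreviate $\|\cdot\|_{\mathscr{K}}:=\|\cdot\|_{\mathscr{K}^s_{p,\infty}(T)}$. First I would record the \emph{linear estimate}: by the heat characterization~\eqref{besovheat} in the endpoint form $q=\infty$ together with the embedding $\dot B^s_{p,q}\hookrightarrow \dot B^s_{p,\infty}$ (valid since $q<\infty$), one has
\begin{align*}
\|e^{t\Delta}u_0\|_{\mathscr{K}^s_{p,\infty}(T)} = \sup_{0<t<T} t^{-s/2}\|e^{t\Delta}u_0\|_{L_x^p} \leq C\|u_0\|_{\dot B^s_{p,\infty}} \leq C\|u_0\|_{\dot B^s_{p,q}},
\end{align*}
so the free evolution already sits in the target space with the correct constant (this uses $s<0$).

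The crux is the \emph{bilinear estimate}. Writing $e^{t\Delta}\mathbb{P}\ \textrm{div} = K_t\ast\,\cdot$ with parabolic scaling $K_t(x)=t^{-(d+1)/2}K_1(x/\sqrt t)$ and $K_1\in L^{p'}$, Young's inequality yields the smoothing bound
\begin{align*}
\big\|e^{(t-\tau)\Delta}\mathbb{P}\ \textrm{div}\, f\big\|_{L_x^p} \leq C(t-\tau)^{-\frac12-\frac{d}{2p}}\|f\|_{L_x^{p/2}}.
\end{align*}
Applying this with $f=u\otimes v$, using $\|u(\tau)\otimes v(\tau)\|_{L^{p/2}}\leq \|u(\tau)\|_{L^p}\|v(\tau)\|_{L^p}\leq \tau^{s}\|u\|_{\mathscr{K}}\|v\|_{\mathscr{K}}$ and the substitution $\tau=t\sigma$, a Beta-type integral computation gives
\begin{align*}
t^{-s/2}\|B(u,v)(t)\|_{L_x^p}\leq C\, t^{\frac12(s-s_p)}\|u\|_{\mathscr{K}}\|v\|_{\mathscr{K}},
\end{align*}
where the finiteness of the $\sigma$-integral encodes exactly the structural hypotheses: convergence at $\sigma=1$ forces $\tfrac12+\tfrac{d}{2p}<1$, i.e. $p>d$, while convergence at $\sigma=0$ forces $s>-1$, which holds since $s\geq s_p>-1$. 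Hence $\|B(u,v)\|_{\mathscr{K}}\leq C\,T^{(s-s_p)/2}\|u\|_{\mathscr{K}}\|v\|_{\mathscr{K}}$, the decisive subcritical gain $T^{(s-s_p)/2}$ appearing precisely when $s>s_p$.

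With the linear and bilinear bounds in hand I would invoke the standard abstract lemma (if $\|y_0\|_X\leq \tfrac{1}{4\eta}$ and $\|B(x,y)\|_X\leq \eta\|x\|_X\|y\|_X$, then $x=y_0-B(x,x)$ has a unique solution with $\|x\|_X\leq 2\|y_0\|_X$) in $X=\mathscr{K}^s_{p,\infty}(T)$ with $\eta\simeq C\,T^{(s-s_p)/2}$ and $\|y_0\|_X\simeq \|u_0\|_{\dot B^s_{p,q}}$. The smallness requirement $C\,T^{(s-s_p)/2}\|u_0\|_{\dot B^s_{p,q}}\lesssim 1$ is then met for $T$ small and furnishes the asserted quantitative control of the existence time in the subcritical range $s>s_p$; at the critical exponent $s=s_p$ the $T$-gain disappears, and one instead uses that $\|e^{t\Delta}u_0\|_{\mathscr{K}^{s_p}_{p,\infty}(T)}\to 0$ as $T\to0$, which follows for smooth $u_0$ from $s_p<0$ and extends to all $u_0\in\dot B^{s_p}_{p,q}$ by density (using $q<\infty$). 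Uniqueness in the relevant ball is part of the same lemma.

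It remains to upgrade to $u\in C([0,T];\dot B^s_{p,q})$ and to prove the weighted derivative bounds in~\eqref{localexistencebdd}. For continuity I would treat the two terms separately: $t\mapsto e^{t\Delta}u_0$ is strongly continuous in $\dot B^s_{p,q}$ because $q<\infty$, while $B(u,u)\in C([0,T];\dot B^s_{p,q})$ with $B(u,u)(0)=0$ is obtained by re-expressing its Besov norm through~\eqref{besovheat} and feeding in the $\mathscr{K}$-bounds just established. The estimates for $\nabla u$, $\partial_t u$, $\nabla\partial_t u$ follow by the parabolic smoothing of the heat semigroup, each $\nabla$ costing a factor $(t-\tau)^{-1/2}$ and each $\partial_t$ a factor $(t-\tau)^{-1}$, which shift the Kato index by $-1$ and $-2$ respectively and reproduce the same Beta-integral bookkeeping (handling $\partial_t$ via the $t>0$ smoothing splitting $u(t)=e^{(t/2)\Delta}u(t/2)-\int_{t/2}^t\cdots$ rather than by differentiating under the integral at the diagonal). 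I expect the genuine obstacle to be the \emph{bilinear estimate together with the exponent count $p>d$, $s>-1$}, with the continuity of $B(u,u)$ into $\dot B^s_{p,q}$ (as opposed to merely into the Kato space) as the secondary technical point.
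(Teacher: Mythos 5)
Your core existence argument is the standard Kato--Fujita iteration and, for what it covers, it is correct: the linear estimate via the heat characterization of $\dot B^s_{p,\infty}$, the Oseen-kernel bound $\|e^{(t-\tau)\Delta}\mathbb{P}\,\mathrm{div}\,f\|_{L^p}\leq C(t-\tau)^{-\frac12-\frac{d}{2p}}\|f\|_{L^{p/2}}$, the Beta-integral bookkeeping giving the gain $T^{(s-s_p)/2}$ (your exponent count $p>d$, $s>-1$, $\tfrac12(1-\tfrac dp+s)=\tfrac12(s-s_p)$ checks out), and the critical-case device of making the linear part small by density (using $q<\infty$). This is also essentially the proof the paper relies on: the paper gives no proof of this theorem at all, citing the 3D argument of Albritton and remarking that higher dimensions are similar, so there is no genuinely different "paper route" to compare against.

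The genuine gap is in the derivative bounds of~\eqref{localexistencebdd}, i.e.\ the cases $j=1$ or $l=1$ --- which are exactly the bounds the paper later needs (they feed the Aubin--Lions compactness in Lemma~\ref{limitoffirst}). Your claim that each $\nabla$ "costs a factor $(t-\tau)^{-1/2}$ \ldots and reproduces the same Beta-integral bookkeeping" is false as stated: the operator $\nabla e^{(t-\tau)\Delta}\mathbb{P}\,\mathrm{div}$ maps $L^{p/2}\to L^p$ with norm $C(t-\tau)^{-1-\frac{d}{2p}}$, which is \emph{not} integrable at $\tau=t$, so the gradient of the Duhamel term cannot be estimated by the same computation; and the $t/2$-splitting you invoke (which does handle $\nabla e^{(t/2)\Delta}u(t/2)$ and is the right move for $\partial_t$) does not remove this divergence on the remaining integral over $(t/2,t)$. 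The standard repair is to move one derivative onto the nonlinearity near the upper endpoint --- write $\mathbb{P}\,\mathrm{div}(u\otimes u)=\mathbb{P}(u\cdot\nabla u)$ there, so the kernel only costs $(t-\tau)^{-\frac12-\frac{d}{2p}}$ at the price of a factor $\|\nabla u(\tau)\|_{L^p}$ --- and then to close the fixed point (or a bootstrap) in the two-norm space $\|u\|_{\mathscr{K}^s_{p,\infty}}+\|\nabla u\|_{\mathscr{K}^{s-1}_{p,\infty}}$; the $\partial_t$ and $\nabla\partial_t$ bounds then come from the equation $\partial_t u=\Delta u-\mathbb{P}\,\mathrm{div}(u\otimes u)$ once second-order spatial bounds are obtained by iterating the same scheme. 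A secondary loose end: the theorem asserts uniqueness in the full class $C([0,T];\dot B^s_{p,q})\cap\mathscr{K}^s_{p,\infty}(T)$, whereas your abstract lemma yields it only in a ball of $\mathscr{K}^s_{p,\infty}(T)$; upgrading this (especially at $s=s_p$) requires the additional observation that any solution in this class has Kato norm vanishing as $T\to 0$, together with a continuity argument.
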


We further exploit a regularity result for the mild solution with data in $\dot B^{s}_{p,q}$, $s_p\leq s<0$.
\begin{prop} \label{linftybd}
Let $u$ be the mild solution given by Theorem~\ref{subcribesov} and the estimate~\eqref{localexistencebdd} hold. Additionally, assume $2d<p<\infty$, then
\begin{align} \label{ptlinfty}
\|u\|_{L^{\infty}(\sigma,T; L^{\infty})} \leq C(\sigma, T, \|u_0\|_{\dot B^{s}_{p,q}}), \ \ \ \forall\, \sigma \in (0,T).
\end{align}
Particularly, $u \in C^{\infty}((0,T)\times \R^d)$.
\end{prop}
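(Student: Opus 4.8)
The plan is to prove Proposition~\ref{linftybd} by a bootstrap argument. We start from the Besov-space control~\eqref{localexistencebdd}, which gives us decay estimates in the Kato-type norms $\mathscr{K}^{\beta}_{p,\infty}$, and then repeatedly apply the Duhamel formula~\eqref{NSI} to upgrade the integrability in the spatial variable, trading in the slightly subcritical smoothing of the heat semigroup against the loss coming from the quadratic nonlinearity $u\otimes u$. The condition $2d<p$ is exactly what is needed so that the product $u\otimes u$ lives in a space half as integrable, namely $L^{p/2}$ with $p/2>d$, which keeps us on the good side of the scaling so that a single application of the heat kernel already pushes the regularity in the right direction.

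First I would set up the smoothing estimates for $e^{t\Delta}\mathbb{P}\,\mathrm{div}$ in Lebesgue spaces: for $1<a\le b\le\infty$,
\begin{align*}
  \|e^{t\Delta}\mathbb{P}\,\mathrm{div}\, f\|_{L^b}
  \le C\, t^{-\frac{1}{2}-\frac{d}{2}(\frac{1}{a}-\frac{1}{b})}\,\|f\|_{L^a}.
\end{align*}
Combined with the standard estimate $\|e^{t\Delta}f\|_{L^b}\le Ct^{-\frac{d}{2}(\frac1a-\frac1b)}\|f\|_{L^a}$, one reads off from~\eqref{NSI} how the self-similar norm $\|u(t)\|_{L^b}\sim t^{(s_b)/2}$-type decay transforms: feeding $u\otimes u\in L^{p/2}$ into the Duhamel integral and tracking the power of $t$, I obtain control of $u$ in a Kato norm $\mathscr{K}^{s'}_{b,\infty}$ with a strictly larger exponent $b$ and a correspondingly less negative (or eventually positive) index $s'$. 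Iterating this finitely many times raises $b$ up to $b=\infty$, which yields $\|u(t)\|_{L^\infty}\le C t^{-\mu}$ on $(0,T)$ for some $\mu\ge 0$; restricting to $t\ge\sigma>0$ then absorbs the time singularity and gives~\eqref{ptlinfty}. The choice $2d<p$ guarantees that the very first product estimate is already supercritical, so the iteration gains at every stage and terminates after finitely many steps.

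Once $u\in L^\infty(\sigma,T;L^\infty)$ is established, the smoothness $u\in C^\infty((0,T)\times\R^d)$ follows by a routine parabolic bootstrap: with $u$ bounded, the nonlinearity $u\cdot\nabla u$ and the pressure (recovered from~\eqref{formulaofpre} via the Calderón--Zygmund operator~\eqref{riesz}) gain integrability, and applying the smoothing of the heat semigroup to the Duhamel formula repeatedly yields bounds on all spatial derivatives $\nabla^j u$ in $L^\infty$ on any $[\sigma,T)$; temporal derivatives are then controlled through the equation itself. I would present this last part briefly, since it is the standard interior-regularity mechanism for the Navier--Stokes equations.

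\textbf{The main obstacle} I expect is the careful bookkeeping in the iteration: one must verify that the exponents of $t$ appearing in the Duhamel integral remain integrable near $\tau=0$ at \emph{every} step (so that the integral converges and the $\mathscr{K}$-norm estimate closes), and that the gain in the spatial exponent $b$ is bounded below at each stage, so that only finitely many iterations are needed to reach $b=\infty$. This amounts to checking a pair of inequalities involving $d$, $p$, and the current exponent; the hypothesis $2d<p$ is precisely what makes these inequalities hold with room to spare. A secondary technical point is handling the time-weight at the initial point $t=0$ uniformly across iterations, which is why the conclusion is stated only on $(\sigma,T)$ rather than up to $t=0$.
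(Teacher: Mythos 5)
Your proposal is correct and follows essentially the same route as the paper: the paper writes $u = e^{t\Delta}u_0 - B(u,u)$, bounds the linear part via the heat-kernel characterization of Besov norms, and estimates $B(u,u)$ in $L^\infty$ by applying the Oseen-kernel smoothing of $e^{(t-\tau)\Delta}\mathbb{P}\,\mathrm{div}$ to $u\otimes u$ controlled through the Kato norm $\mathscr{K}^{s}_{p,\infty}(T)$, which is exactly your mechanism, with $p>2d$ serving precisely to make the time singularity $(t-\tau)^{-1/2-d/p}$ integrable. The only cosmetic difference is that the paper reaches $L^\infty$ in a single Duhamel application rather than a finite iteration (with $p>2d$ no intermediate exponents are needed), and the smoothness statement is likewise concluded by the standard parabolic bootstrap (cited from the literature).
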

\begin{proof}
For simlicity, we denote $\delta = s-s_p$ and obviously, $\delta \in [0,1)$.  It is known that $u$ can be written as
\begin{align}
u(t) & =   e^{ t \Delta } u_0 -  \int^t_0  e^{ (t-\tau) \Delta } \mathbb{P}\ \textrm{div}(u\otimes u)(\tau) d\tau \no  \\
     & = u_{L} - B(u,u).
\end{align}
The estimate of the linear term $u_{L}:=  e^{ t \Delta } u_0 $ follows from~\eqref{besovheat}, since
\begin{align} \label{linearinfty}
\sup_{0<t<T} t^{(1-\delta)/2} \|u_{L}\|_{L^{\infty}} \leq C \|u_0\|_{\dot B^{-1+\delta}_{\infty,\infty}} \leq C \|u_0\|_{\dot B^{s}_{p,q}}.
\end{align}
On the other hand, by~\cite{pglr02}, the bilinear term $B(u,u)$ can be formulated as
\begin{align}
B(u,u)(t) = \int_0^t \frac{1}{(t-\tau)^{\frac{d+1}{2}}} G\Big(\frac{x}{\sqrt{t-\tau}} \Big) * (u\otimes u )(\tau) d\tau,
\end{align}
where $G(x)$ satisfies
$$
|G(x)|\leq \frac{C}{(1+|x|)^{d+1}}.
$$
Let $r$ be such that $1= 2/p +1/r$, $t\in (\sigma, T)$,   applying Young inequality, one can figure out that
\begin{align}
\|B(u,u)\|_{L^{\infty}} &\leq C \int_0^t \frac{1}{(t-\tau)^{\frac{d+1}{2}}} \Big\|G\Big(\frac{x}{\sqrt{t-\tau}} \Big)\Big\|_{L^{r}} \|u(\tau)\|^2_{L_x^p} d\tau  \no \\
& \leq C \|u\|^2_{\mathscr{K}^s_{p,\infty}(T)} \int_0^t \frac{\tau^{s}}{(t-\tau)^{1/2 +d/p}} d\tau  \no \\
& \leq C t^{\delta -1/2} \|u\|^2_{\mathscr{K}^s_{p,\infty}(T)}  \int_0^1 \frac{\lambda^s}{(1-\lambda)^{1/2 +d/p}} d \lambda. \no
\end{align}
Note that $-1<s<0$,\ $2d < p < \infty$ and~\eqref{localexistencebdd}, one can readily see
\begin{align}
\|B(u,u)\|_{L^{\infty}(\sigma,T; L^{\infty})} \leq C(\sigma, \delta, T, \|u_0\|_{\dot B^s_{p,q}}).
\end{align}
This combining with~\eqref{linearinfty} yields the desired bound. Once~\eqref{ptlinfty} is established, the smoothness becomes an immediate result, see~\cite{pglr02}.
\end{proof}

Next result is related to the decomposition of functions in Besov space, which can be viewed from the point of interpolation theory, here we present a simple version, see~\cite{albritton16} for the proof. As for the slightly general case, one can refer to~\cite{barker16}.
\begin{lem} \label{besovdec}
Let $d<p<m< \infty$ and $\theta \in (0,1)$ be such that
\begin{align}
\frac{1}{p} = \frac{\theta}{2} + \frac{1-\theta}{m}.
\end{align}
Define $s$ by $s_p = (1-\theta)s$.  Given $\eta >0$ and a vector field $v \in \dot B^{s_p}_{p,p}(\R^d)$, there exist vector fields $U\in  \dot B^{s_p}_{p,p} \cap L^2$ and $V \in \dot B^{s_p}_{p,p}\cap \dot B^{s}_{m,m}$, verifying  $v= U+V$ and
\begin{align}
\|U\|^2_{L^2} &\leq  C\eta^{2-p} \|v\|^p_{\dot B^{s_p}_{p,p}}, \\
\|V\|^m_{\dot B^s_{m,m}} &\leq  \eta^{m-p} \|v\|^p_{\dot B^{s_p}_{p,p}}, \\
\|U\|_{\dot B^{s_p}_{p,p}} + \|V\|_{\dot B^{s_p}_{p,p}} &\leq C \|v\|_{\dot B^{s_p}_{p,p}}.
\end{align}
Further, $U$ and $V$ can be selected to be divergence free provided that $v$ is divergence free.
\end{lem}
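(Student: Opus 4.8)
The plan is to recognize $\dot B^{s_p}_{p,p}(\R^d)$ as a real interpolation space between $L^2$ and $\dot B^{s}_{m,m}$ and to read the decomposition off the $K$-functional of this couple. First I would check that the indices in the statement are exactly the interpolation relations: writing $\mu := 1-\theta$ for the parameter weighting the second endpoint, the smoothness relation $s_p=(1-\theta)s=\mu s+(1-\mu)\cdot 0$ and the integrability relation $\frac1p=\frac{\theta}{2}+\frac{1-\theta}{m}=\frac{1-\mu}{2}+\frac{\mu}{m}$ are precisely the hypotheses under which the standard real interpolation theorem for Besov/Triebel--Lizorkin spaces (recall $L^2=\dot F^0_{2,2}$ and $\dot B^s_{m,m}=\dot F^s_{m,m}$, the smoothness indices $0\neq s$ being distinct and the third index being $p$) yields, with equivalence of norms,
\[
(L^2,\dot B^{s}_{m,m})_{\mu,p}=\dot B^{s_p}_{p,p}
\]
(see, e.g., \cite{Tr83}). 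In particular $\|v\|_{\dot B^{s_p}_{p,p}}\sim\big(\int_0^\infty (t^{-\mu}K(t,v))^p\,\frac{dt}{t}\big)^{1/p}$, where $K(t,v)=\inf_{v=g+h}\big(\|g\|_{L^2}+t\|h\|_{\dot B^{s}_{m,m}}\big)$.

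Next I would select the scale. Since $K(\cdot,v)$ is nondecreasing and $K(\cdot,v)/(\cdot)$ nonincreasing, the quasi-monotonicity of the profile lets the integral norm dominate it pointwise, giving $K(t,v)\le C\,t^{\mu}\|v\|_{\dot B^{s_p}_{p,p}}$ for all $t>0$. Fixing $\eta>0$ and setting $A:=\|v\|_{\dot B^{s_p}_{p,p}}$, I would take $t_\eta$ proportional to $(\eta/A)^{(2-p)/(2\mu)}$ and choose a near-optimal splitting $v=U+V$ with $\|U\|_{L^2}+t_\eta\|V\|_{\dot B^{s}_{m,m}}\le 2K(t_\eta,v)$; this forces $\|U\|_{L^2}\le C t_\eta^{\mu}A$ and $\|V\|_{\dot B^{s}_{m,m}}\le C t_\eta^{\mu-1}A$. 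Substituting $t_\eta$ and using the value $\mu=\frac{m(2-p)}{p(2-m)}$ forced by the integrability relation (equivalently $m(2-p)=\mu p(2-m)$), a one-line computation gives $t_\eta^{2\mu}A^2=\eta^{2-p}A^p$ and $t_\eta^{(\mu-1)m}A^m=\eta^{m-p}A^p$; the two exponents are mutually consistent precisely because $p(2-m)(\mu-1)=2(m-p)$, so the powers of $\eta$ come out exactly as required. Tuning the proportionality constant hidden in $t_\eta$ then absorbs the multiplicative constant in the $V$-estimate (whose exponent $\mu-1<0$ makes it shrink as $t_\eta$ grows), leaving $\|V\|^m_{\dot B^s_{m,m}}\le\eta^{m-p}A^p$ constant-free while $\|U\|^2_{L^2}\le C\eta^{2-p}A^p$ retains a harmless $C$, matching the statement.

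The remaining bound $\|U\|_{\dot B^{s_p}_{p,p}}+\|V\|_{\dot B^{s_p}_{p,p}}\le C\|v\|_{\dot B^{s_p}_{p,p}}$ is the delicate point and the one I expect to be the main obstacle: a bare $K$-optimizer controls only the two endpoint norms, and the naive interpolation inequality is unavailable since we have no handle on $\|U\|_{\dot B^s_{m,m}}$ nor on $\|V\|_{L^2}$. I would resolve this not by picking an arbitrary optimizer but by using the explicit splitting supplied in the proof of the interpolation identity: pass through the Littlewood--Paley retraction so that the three spaces become weighted (vector-valued) sequence spaces, perform the decomposition block by block at the sequence level by thresholding each dyadic piece, and transfer back. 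Being linear per frequency, each of $U,V$ is a Fourier-localized modification of $v$ and is therefore automatically bounded in $\dot B^{s_p}_{p,p}$ by $\|v\|_{\dot B^{s_p}_{p,p}}$, while still realizing $K(t_\eta,v)$ up to a constant, so the previous two estimates persist. Finally, for the divergence-free refinement I would apply the Leray projection $\mathbb{P}$: it is a homogeneous Fourier multiplier of degree $0$, hence bounded on $L^2$, $\dot B^s_{m,m}$ and $\dot B^{s_p}_{p,p}$, and if ${\rm div}\,v=0$ then $v=\mathbb{P}v=\mathbb{P}U+\mathbb{P}V$, so replacing $(U,V)$ by $(\mathbb{P}U,\mathbb{P}V)$ preserves all three bounds and renders both pieces solenoidal.
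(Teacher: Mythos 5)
Your proposal is correct and is essentially the route the paper itself takes: the paper gives no self-contained proof, deferring to \cite{albritton16,barker16}, where the lemma is obtained exactly as you do, from the real-interpolation identity $\dot B^{s_p}_{p,p}=(L^2,\dot B^{s}_{m,m})_{1-\theta,p}$ together with a near-optimal $K$-functional splitting at a scale $t_\eta$ fixed by the same exponent bookkeeping. One simplifying remark: the third bound is less delicate than you suggest, since for \emph{any} near-optimal splitting $v=U+V$ at $t_\eta$ one has $K(\tau,U)\leq\min\{2K(t_\eta,v),\,K(\tau,v)+2(\tau/t_\eta)K(t_\eta,v)\}$ (and symmetrically for $V$), which combined with $K(t_\eta,v)\leq C\,t_\eta^{1-\theta}\|v\|_{\dot B^{s_p}_{p,p}}$ already yields $\|U\|_{\dot B^{s_p}_{p,p}}+\|V\|_{\dot B^{s_p}_{p,p}}\leq C\|v\|_{\dot B^{s_p}_{p,p}}$, so your Littlewood--Paley retraction refinement, while perfectly valid, is not actually needed.
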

When  making standard  energy estimate for NS equation, we need to deal with some type of trilinear form, specifically, the integral $\int_0^T \int_{\R^d} v\otimes u:\nabla u dxdt $ with $u \in E_T$, $v$ has some sort of regularity condition, here
\begin{align} \label{energyspace}
E_T:= L^{\infty}(0,T; L^2(\R^d)) \cap L^2(0,T;\dot H^1(\R^d)).
\end{align}
The following result gives a proper estimate of that kind, and is adapted to our needs later. One can refer to~\cite{Gi86,barker16} and references therein for the proof.
\begin{lem}\label{trilinearesti}
Let $d\geq 3$, $u\in E_T$, $v\in L^r(0,T;L^q(\R^d))$ with $2/r+ d/q = 1,\,d<q<\infty$. Then a constant $C$ exists, such that
 \begin{align}
 \|v \otimes u\|_{L^2(0,T;L^2)}  \leq C \|v\|_{L^r(0,T;L^q)} \|u\|^{1-\theta}_{L^{\infty}(0,T;L^2)} \|u\|^{\theta}_{L^{2}(0,T;\dot H^1)},\ \ \
 \theta = d/q.
 \end{align}
 Moreover, for any $\epsilon>0$, there exists a constant $C_{\epsilon}$, such that
\begin{align}
\int_0^T \int_{\R^d} v\otimes u:\nabla u dxdt \leq \epsilon \int_0^{T} \|\nabla u\|^2_{L^2} dt +
C_{\epsilon} \int_0^{T} \|v\|^r_{L^q} \|u\|^2_{L^2}dt.
\end{align}
\end{lem}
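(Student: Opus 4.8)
The plan is to establish the first (interpolation) inequality by combining Hölder's inequality in space with a Gagliardo--Nirenberg estimate, and then to deduce the second (energy) inequality by pairing the resulting bound against $\nabla u$ and invoking Young's inequality. The single scaling relation $2/r + d/q = 1$, together with $\theta = d/q$, is what makes every conjugate exponent align, so I would verify these identities first.

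Working at a fixed time $t$, Hölder's inequality with $\tfrac12 = \tfrac1q + \tfrac1a$, i.e.\ $a = \tfrac{2q}{q-2}$, gives $\|v\otimes u\|_{L^2_x} \leq \|v\|_{L^q_x}\|u\|_{L^a_x}$. This $a$ is admissible for interpolation between $L^2$ and the Sobolev endpoint $L^{2^*}$, $2^* = \tfrac{2d}{d-2}$: the identity $\tfrac1a = \tfrac{1-\theta}{2} + \tfrac{\theta}{2^*}$ holds exactly when $\theta = d/q$, and $d<q<\infty$ forces $\theta\in(0,1)$. Hence Gagliardo--Nirenberg and the embedding $\dot H^1 \hookrightarrow L^{2^*}$ yield the pointwise-in-time bound
\[
\|v\otimes u\|_{L^2_x} \leq C\|v\|_{L^q_x}\,\|u\|_{L^2_x}^{1-\theta}\,\|\nabla u\|_{L^2_x}^{\theta}.
\]
For the first inequality I would square, integrate in $t$, extract the factor $\|u\|_{L^{\infty}_tL^2_x}^{2(1-\theta)}$ from the integral, and apply Hölder in time to $\|v\|_{L^q_x}^2\,\|\nabla u\|_{L^2_x}^{2\theta}$. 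The exponents $r/2$ (on $v$) and $1/\theta$ (on $\|\nabla u\|_{L^2_x}^{2\theta}$, so that $\|\nabla u\|_{L^2_x}$ is raised to the power $2$) are conjugate precisely because $\tfrac2r + \theta = 1$, which is the hypothesis. Taking square roots gives the claim, using $\|\nabla u\|_{L^2_tL^2_x} = \|u\|_{L^2(0,T;\dot H^1)}$.

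For the second inequality I would pair $v\otimes u$ with $\nabla u$ by Cauchy--Schwarz in space and insert the pointwise bound above to get, at each $t$,
\[
\int_{\R^d} v\otimes u : \nabla u\,dx \leq C\|v\|_{L^q_x}\,\|u\|_{L^2_x}^{1-\theta}\,\|\nabla u\|_{L^2_x}^{1+\theta}.
\]
Then Young's inequality with the conjugate pair $\tfrac{2}{1+\theta},\ \tfrac{2}{1-\theta}$ peels off $\epsilon\,\|\nabla u\|_{L^2_x}^2$ and leaves $C_\epsilon\,\|v\|_{L^q_x}^{2/(1-\theta)}\|u\|_{L^2_x}^2$; since $\tfrac{2}{1-\theta} = r$ (again equivalent to $\tfrac2r + \tfrac dq = 1$), integrating in $t$ delivers the stated bound.

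The calculations are routine once the exponents are pinned down; the only delicate point is the bookkeeping, namely checking that the one relation $\tfrac2r + \tfrac dq = 1$ simultaneously makes $\theta = d/q$ an admissible interpolation parameter, makes $(r/2,\,1/\theta)$ Hölder-conjugate in the time integration, and makes $\tfrac{2}{1-\theta}$ equal to $r$ in the Young step. I would record these three identities explicitly at the start, after which both estimates follow mechanically.
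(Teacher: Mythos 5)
Your proof is correct: all three exponent identities you isolate ($1/a = (1-\theta)/2 + \theta(d-2)/(2d)$ with $\theta = d/q$, the conjugacy of $r/2$ and $1/\theta$ in time, and $r = 2/(1-\theta)$ in the Young step) check out, and the Sobolev embedding $\dot H^1 \hookrightarrow L^{2d/(d-2)}$ is valid since $d \geq 3$. The paper does not reproduce a proof of this lemma, deferring instead to the cited references (Giga; Barker), and your argument via H\"older, Sobolev interpolation, and Young's inequality is precisely the standard one used there, so there is nothing to flag.
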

We now state our main result in this part. For simplicity, we restrict ourselves to consider the initial data in  $\dot B^{s_p}_{p,p}$, which is enough for our later purpose.
\begin{prop} \label{bdatinf}
Let $4\leq d<p<\infty,\, u_0\in \dot B^{s_p}_{p,p}$ and $u:= NS(u_0)$ be the  mild solution to~\eqref{ns}, assume further $u\in L^{\infty}(0,T_*; \dot B^{s_p}_{p,p})$   blows up at finite time $T_*$,  then there exists some $R>0$, such that
\begin{align}
\sup_{T_*/2 <t< T_*} \sup_{x\in B(R)^c} |u(x,t)| < \infty.
\end{align}
\end{prop}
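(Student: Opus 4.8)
The plan is to localize the singular set to a bounded region by showing that, far from the origin, the scale--invariant local energy of $u$ falls below the threshold $\tilde\epsilon_1$ of Corollary~\ref{modifiedversionregu}, uniformly for $t\in(T_*/2,T_*)$; the asserted boundedness of $|u|$ on $B(R)^c$ then follows immediately. The starting observation is cheap: since $s_p=-1+d/p$, the embedding $\dot B^{s_p}_{p,p}\hookrightarrow\dot B^{-1}_{\infty,\infty}$ together with the standing hypothesis $\sup_{t<T_*}\|u(t)\|_{\dot B^{s_p}_{p,p}}\le M$ yields $\|u\|_{L^\infty(0,T_*;\dot B^{-1}_{\infty,\infty})}\le CM$, which is precisely the ingredient that Proposition~\ref{stlffestimate} and Corollary~\ref{modifiedversionregu} require in dimension $d\ge 4$. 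Thus the whole problem reduces to producing a finite global-in-time local energy for $u$ and to exhibiting the spatial decay of that energy.

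To manufacture finite energy I would decompose the datum via Lemma~\ref{besovdec}, writing $u_0=U_0+V_0$ with $U_0\in L^2\cap\dot B^{s_p}_{p,p}$ and $V_0\in\dot B^{s}_{m,m}\cap\dot B^{s_p}_{p,p}$, the parameter $\eta$ chosen small. Let $V:=NS(V_0)$ be the mild solution with the more regular datum; by Theorem~\ref{subcribesov} and Proposition~\ref{linftybd} it is smooth on $(0,T_*)\times\R^d$ and, through its subcritical bounds, lies in a Serrin class $L^r(0,T_*;L^q)$ with $2/r+d/q=1$, $d<q<\infty$. Setting $U:=u-V$, the field $U$ solves the perturbed Navier--Stokes system with drift $V$ and datum $U_0\in L^2$; testing against $U$ and absorbing the two drift trilinear terms by the second estimate of Lemma~\ref{trilinearesti}, a Gronwall argument closes the energy inequality and gives $U\in E_{T_*}=L^\infty(0,T_*;L^2)\cap L^2(0,T_*;\dot H^1)$. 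Note that finite energy does not contradict the blowup of $u$ at $T_*$, since in $d\ge 4$ the class $E_{T_*}$ is far from regularity; what the splitting records is that the only obstruction to finite energy sits in the smooth, globally controlled part $V$.

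The heart of the argument is the spatial localization. I would run a weighted energy estimate for $U$: testing the $U$--equation against $U\chi_R^2$, where $\chi_R$ vanishes on $B(R)$, equals $1$ on $B(2R)^c$ and satisfies $|\nabla\chi_R|\le CR^{-1}$, every commutator term carries a factor $R^{-1}$ and is majorized by the already finite global quantities of Step~2. Since $U_0\in L^2$ has vanishing tail, $\int_{|x|>R}|U_0|^2\to0$, so a Gronwall argument produces, for each $\delta>0$, a radius $R$ with $\sup_{0<t<T_*}\int_{|x|>R}|U|^2\,dx+\int_0^{T_*}\int_{|x|>R}|\nabla U|^2\,dxdt<\delta$. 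Because $V$ lies in a Serrin class and is bounded, its own tails $\int_0^{T_*}\int_{|x|>R}|V|^q\,dxdt$ are likewise small, so the localized energy of $u=U+V$ on any fixed--scale cylinder $Q(z_0,\rho)$ with $|x_0|>R$ can be made as small as we please, uniformly in time. Feeding this localized energy together with the $\dot B^{-1}_{\infty,\infty}$ bound into Proposition~\ref{stlffestimate} upgrades it to a small $L^4$, hence a small scale--invariant $L^3$, norm of $u$ on such cylinders.

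The remaining and, to my mind, hardest point is the pressure, since the scale--invariant smallness of $\int_{Q(z_0,3\rho/4)}|p|^{3/2}$ far from the origin cannot follow from a purely local bound: $p$ is globally coupled to $u$. I would treat it exactly as in Lemma~\ref{diffpressureestimate}, splitting $p$ localized by a cutoff centred at $x_0$ into a Calder\'on--Zygmund term controlled by the small local $L^4$ of $u$, and far--field tail terms with kernels $\int|y|^{-d}|u|^2$ and $\int|y|^{-d}|p|$ that decay and are therefore dominated by the global energy times a negative power of $|x_0|$. Once $\int_{Q(z_0,3\rho/4)}(|u|^3+|p|^{3/2})\le\tilde\epsilon_1\,\rho^{d-1}$ holds for all $|x_0|>R$ and all $t\in(T_*/2,T_*)$, Corollary~\ref{modifiedversionregu} gives $\sup_{Q(z_0,\rho/4)}|u|\le C/\rho$, and taking the supremum over such $z_0$ yields the claim. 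The principal obstacle throughout is to make the tail smallness uniform up to the blowup time $T_*$ and to tame the nonlocal pressure tail; both are resolved by the perturbative splitting into the finite--energy $U$ and the globally subcritical $V$, which is exactly why the estimate must be set up on the pair $(U,V)$ rather than directly on $u$.
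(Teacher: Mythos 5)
Your proposal follows the paper's skeleton quite closely: the same splitting $u_0=U_0+V_0$ via Lemma~\ref{besovdec}, the same subcritical solution $V=NS(V_0)$ in a Serrin class, the same energy estimate for $U=u-V$ via Lemma~\ref{trilinearesti} and Gronwall, and the same endgame of invoking Corollary~\ref{modifiedversionregu} on fixed-scale cylinders far from the origin. Where you diverge is the localization mechanism: the paper never runs a weighted (cut-off) energy estimate. Instead it observes that $U$ inherits the bound $U\in L^{\infty}(0,T_*;\dot B^{s_p}_{p,p})$ (since $u$ and $V$ both lie there), so Proposition~\ref{lffestimate} interpolated against $E_{T_*}$ gives the \emph{global} space-time bound $U\in L^4(0,T_*;L^4)$; combined with $V\in L^{\infty}(T_*/4,T_*;L^m)$ and Calder\'on--Zygmund theory applied to $\Delta q=-\partial_i\partial_j(u^iu^j)$ term by term, this yields $q=Q_1+Q_2$ with $Q_1\in L^{\infty}(T_*/4,T_*;L^{m/2})$ and $Q_2\in L^2(0,T_*;L^2)$, after which the smallness of $\int\!\!\int_{B(R_0)^c\cap B(z,\rho)}(|u|^3+|q|^{3/2})$ is just absolute continuity of the integral for globally integrable functions. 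No weighted estimate and no local pressure representation are needed.

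This difference is not merely stylistic: your treatment of the pressure has a genuine gap. You claim the far-field kernel term $\int|y|^{-d}|p|\,dy$ arising from the Lemma~\ref{diffpressureestimate}-type decomposition is ``dominated by the global energy times a negative power of $|x_0|$,'' but at that point in your argument no global bound on $p$ of any kind has been established, and the global energy cannot supply one: $p$ is nonlocal in $u$, and $u$ itself has infinite energy (only $U$ does). The same missing ingredient already undermines your weighted energy estimate for $U$, whose identity contains the pressure flux term $\int\!\!\int Q_2\,U\cdot\nabla\chi_R^2\,dx\,dt$; to control it you again need $Q_2$ in some global space-time class. Filling either hole forces you to first prove $U\otimes U,\,U\otimes V\in L^2(0,T_*;L^2)$ and $V\otimes V\in L^{\infty}(T_*/4,T_*;L^{m/2})$ and apply Calder\'on--Zygmund globally --- i.e., exactly the interpolation step $U\in L^4(0,T_*;L^4)$ via Proposition~\ref{lffestimate} that your write-up omits. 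Once that step is inserted, however, the weighted energy estimate and the local pressure decomposition become superfluous: the paper's absolute-continuity argument concludes directly. A secondary, smaller point: your claim of smallness of the localized energy of $u$ ``uniformly in time'' requires uniform-in-time decay of the $L^m$ tails of $V(t)$ and $\nabla V(t)$, which holds (by time-continuity and compactness of the orbit in $L^m$) but is asserted without justification; the paper sidesteps this by working with space-time integrals throughout.
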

\begin{proof}
Let $p<m<\infty$ satisfy
\begin{align}
\frac{1}{p} = \frac{1-\theta}{m} + \frac{\theta}{2}, \ \, s_p = (s_m + \delta)(1-\theta).
\end{align}
A simple computation shows $\delta = (d-2)\theta/[2(1-\theta)]$.   According to Lemma~\ref{besovdec},
we can decompose $u_0$ as $u_0 = u_{0,1} + u_{0,2}$ with $u_{0,1} \in \dot B^{s_m+ \delta}_{m,m} \cap \dot B^{s_p}_{p,p}$,\ \ $u_{0,2} \in L^2 \cap \dot B^{s_p}_{p,p}$. Besides,
\begin{align}
\|u_{0,1}\|^m_{\dot B^{s_m+\delta}_{m,m}} \leq& \eta^{m-p} \|u_0\|^p_{\dot B^{s_p}_{p,p}},\ \ \ \|u_{0,2}\|^2_{L^2} \leq C\eta^{2-p} \|u_0\|^p_{\dot B^{s_p}_{p,p}}, \label{4estimate1} \\
&\|u_{0,1}\|_{\dot B^{s_p}_{p,p}} + \|u_{0,2}\|_{\dot B^{s_p}_{p,p}} \leq C \|u_0\|_{\dot B^{s_p}_{p,p}}. \label{4estimate2}
\end{align}
Define $V = NS(u_{0,1})$ and $U = u- V$. Taking $\eta $ to be small enough,   we see the existence time $T$ of solution $V$ given by Theorem~\ref{subcribesov} can be beyond $T_*$, so
\begin{align}
\|V\|_{\mathscr{K}^{s_m+\delta}_{m,\infty}(T_*)} + \|\nabla V\|_{\mathscr{K}^{s_m+\delta-1}_{m,\infty}(T_*)} &\leq C \|u_{0,1}\|_{\dot B^{s_m+\delta}_{m,m}} \leq C(\eta, \|u_0\|_{\dot B^{s_p}_{p,p}}), \no \\
\|V\|_{L^{\infty}(0,T_*; \dot B^{s_p}_{p,p})} &\leq C(\eta, \|u_0\|_{\dot B^{s_p}_{p,p}}),
\end{align}
In addition, determining $r$ by $2/r + d/m =1$, one can verify
\begin{align}
\|V\|^r_{L^r(0,T_*; L^m)} \leq \|V\|^r_{\mathscr{K}^{s_m+\delta}_{m,\infty}(T_*)} \int_0^{T_*} t^{r/2(s_m+\delta)} dt \leq C T_*^{r\delta/2}.
\end{align}
where $C$ depends on $r,\,\delta,\, \eta$ and $\|u_0\|_{\dot B^{s_p}_{p,p}}$. Let $Q_1$ be the associated pressure with $V$, then
 \begin{align}
 \Delta Q_1 = -{\rm div}\,(V\cdot \nabla V).
 \end{align}
 It follows from the classical Calderon-Zygmund estimate that
\begin{align}
\sup_{T_*/4<t<T_*} \|Q_1\|_{L^{m/2}} \leq C\big(\eta, T_*, \|u_0\|_{\dot B^{s_p}_{p,p}}\big).
\end{align}
On the other hand, $U$ solves the following perturbative Navier-Stokes equation:
\begin{equation*}
\begin{cases}
\partial_t U - \Delta U + U \cdot\nabla U + V \cdot \nabla U + U \cdot \nabla V + \nabla Q_2 =0,  & \\
{\rm div}\, U =0, & \\
U(0,x) = u_{0,2}(x), &
\end{cases}
\end{equation*}
where  $Q_2$ satisfies
\begin{align}
\Delta Q_2 = -{\rm div}(U\cdot \nabla U + U\cdot \nabla V + V\cdot \nabla U ).
\end{align}
Noticing that
 $$ u,\,V \in L^{\infty}(0,T_*;\dot B^{s_p}_{p,p}),$$
so does $U$.  At the same time, as  $u_{0,2} \in L^2$, by persistence and propagation of regularity (cf. \cite{GaIfPl03}), one can further show there exists a time $T_1 \leq T_*$, such that
\begin{align}
U \in L^{\infty}(0,T_1;L^2) \cap L^2(0,T_1;\dot H^1).
\end{align}
Hence, performing the standard energy estimate, we can see that
\begin{align}
\|U(t)\|^2_{L^2} + 2 \int_{0}^t \|\nabla U(\tau)\|^2_{L^2} d\tau = \|u_{0,2}\|^2_{L^2} + 2 \int_0^t \int_{\R^d} V \otimes U: \nabla U dx d\tau
\end{align}
fulfills for all $t\in (0, T_1)$.  This together with Lemma~\ref{trilinearesti} and Gronwall inequality yields
\begin{align}
\|U(t)\|^2_{L^2} +  \int_{0}^t \|\nabla U(\tau)\|^2_{L^2} d\tau  \leq \|u_{0,2}\|^2_{L^2} \exp{\bigg(C\int_{0}^{T_*} \|V(\tau)\|^r_{L^m} d\tau \bigg)},\ \ \ 0\leq t < T_1.
\end{align}
However, the above boundedness of $U$ in the energy space can ensure that $T_1 = T_*$, so $U \in E_{T_*}$. Noting that
\begin{align}
U\in  L^{\infty}(0,T_*;\dot B^{s_p}_{p,p}).
\end{align}
 By Proposition~\ref{lffestimate}, we have  $U \in L^4(0,T_*;L^4)$.
On account of Lemma~\ref{trilinearesti}, we see that $U\otimes V \in L^2(0,T_*;L^2)$. Now applying  Calderon-Zygmund estimate, one can obtain that $Q_2 \in L^2(0,T_*;L^2)$. To summary, $u = V+ U$, with
\begin{align} \label{decomglobalbound}
V \in L^{\infty}(T_*/4,T_*; L^m(\R^d)), \ \ \ U \in L^4(0,T_*; L^4(\R^d)).
\end{align}
The associated pressure $q = Q_1 + Q_2$, with
\begin{align} \label{pressureglobal}
Q_1 \in L^{\infty}(T_*/4,T_*; L^{m/2}(\R^d)), \ \ \ Q_2 \in L^2(0,T_*; L^2(\R^d)).
\end{align}
Moreover, $u,\,q$ forms a pair of suitable weak solution on any bounded domain of $(T_*/4,T_*) \times \R^d $.  Due to~\eqref{decomglobalbound} and~\eqref{pressureglobal}, one can claim that for any $\epsilon, \rho>0$,  there exists some $R_0>0$, such that
\begin{align}
\sup_{z\in \mathbb{R}^d} \int_{T_*/4}^{T_*} \int_{B(R_0)^c \cap B(z,\rho)}|u|^3 + |q|^{3/2}dxdt  < \epsilon/T_*^{{(d-1)/2}}.
\end{align}
Fix a point $z_0=(x_0,t_0)$, $T_*/2<t_0<T_*$. Choosing $\rho= \sqrt{T_*}/2$, one sees $Q(z_0,\rho) \subset (T_*/4,T_*) \times \R^d $ and
\begin{align}
\|u\|_{L^{\infty}(t_0-\rho^2,t_0; \dot B^{-1}_{\infty,\infty})} \leq \|u\|_{L^{\infty}(0,T_*; \dot B^{-1}_{\infty,\infty})}.
\end{align}
The following integral
\begin{align}
\frac{1}{\rho^{d-1}}\int_{Q(z_0,3\rho/4)} |u|^3 + |q|^{3/2} dxdt   < C \epsilon
\end{align}
provided  $|x_0| > R_0+ \rho $. Now we take $\epsilon$ so small that $C\epsilon < \tilde{\epsilon}_1$.  Therefore, Corollary~\ref{modifiedversionregu} implies the boundedness of $u$ around $z_0$. The proof is completed.
\end{proof}

\subsection{Some priori estimates and limiting process} \label{prioriestimate}

This subsection presents some preparation results for the  proof of Theorem~\ref{mildblowupbesov}. Since $\dot B^{s_p}_{p,q} \subset \dot B^{s_r}_{r,r}$ for $r = \max{\{p,q\}}$, it suffices to prove the theorem with initial data $ u_0 \in \dot B^{s_p}_{p,p}, \  4\leq d<p<\infty$, see~\cite{GaIfPl03,GKP} for further explanations. Taking the assumption of Proposition~\ref{linftybd} into consideration, we will assume, from now on that in Theorem~\ref{mildblowupbesov}, the initial data $u_0$ fulfills
\begin{align} \label{initialhypo}
u_0 \in \dot B^{s_p}_{p,p}, \ \ \ 2d < p< \infty.
\end{align}

Let $u:= NS(u_0)$ be the mild solution described in Theorem~\ref{mildblowupbesov}, assume the conclusion there is false, by \eqref{upperbd}, there exists $\widetilde{M}>0$ such that
\begin{align} \label{bddbesov}
\|u\|_{L^{\infty}(0,T_*;\dot B^{s_p}_{p,p})} \leq \widetilde{M}.
\end{align}
As $u$ becomes singular at $T_*$, so
\begin{align}
\lim\sup_{t\to T_*} \|u(t)\|_{L^{\infty}(\R^d)} = \infty.
\end{align}
However, Proposition~\ref{bdatinf} implies the boundedness of $u$ out of $B(R)$ for some $R>0$, it follows that there exists some point $Z_0:=(X_0, T_*)$, such that $u$ is singular at $Z_0$,  more precisely,
\begin{align}
u \notin L^{\infty}(Q(Z_0,r)), \ \ \ \forall\, 0<r<\sqrt{T_*}.
\end{align}

Let $q$ be the pressure associated with $u$,  we plan to rescale  $u,\,q$ around $Z_0$, then derive a solution sequence. First, by~\eqref{bddbesov}, one can find a time sequence $\{t_n\}_{n\geq 1}$, such that $t_n \to T_*$ as $n \to \infty$, and
\begin{align}
\|u(t_n)\|_{\dot B^{s_p}_{p,p}} \leq \|u\|_{L^{\infty}(0,T_*;\dot B^{s_p}_{p,p})}\leq \widetilde{M}.
\end{align}
Without loss of generality, one can  assume
\begin{align}
u(t_n) \rightharpoonup  u^*:= u(T_*) \ \ \ \textrm{weakly in} \ \ \ \dot B^{s_p}_{p,p}.
\end{align}
%$u(t_n) \rightharpoonup u(T_*)$ weakly in $\dot B^{s_p}_{p,p}$.
Set $\lambda_n = \sqrt{(T_* -t_n)/2}$ and denote
\begin{align}
v_n(x,t) &= \lambda_n u(X_0 + \lambda_n x, T_*+ \lambda^2_n t), \\
q_n(x,t) &= \lambda_n^2q(X_0 + \lambda_n x, T_*+ \lambda^2_n t).
\end{align}
Naturally, $v_n $ is a mild solution to~\eqref{ns} on $(-2,0) \times \R^d$ with initial data
$$v_n(x,-2)= v_{0,n}:= \lambda_n u(X_0+ \lambda_n x, t_n).$$
By a direct calculation, one can see
\begin{align}
\|v_n\|_{L^{\infty}(-2,0; \dot B^{s_p}_{p,p})}  = \|u\|_{L^{\infty}(t_n,T_*; \dot B^{s_p}_{p,p})} &\leq \widetilde{M},  \label{bdbesov}\\
\|v_{0,n}\|_{\dot B^{s_p}_{p,p}}  = \|u(t_n)\|_{\dot B^{s_p}_{p,p}} &\leq \widetilde{M}.
\end{align}

Next, we aim at obtaining some uniform control over $v_n,\,q_n$, the procedure is quite similar to the proof of Proposition~\ref{bdatinf} and we will omit the details of the argument by simply writing down relevant conclusions. For convenience, the notations $ m,\,\delta,\, r $ used in the proof of Proposition~\ref{bdatinf} will be continuously used. First, there exist $v^1_{0,n} \in \dot B^{s_m +\delta}_{m,m} \cap \dot B^{s_p}_{p,p}$
and $v_{0,n}^2 \in L^2 \cap \dot B^{s_p}_{p,p} $, such that $ v_{0,n} = v^1_{0,n} + v^2_{0,n}$ and the corresponding qualitative estimates   hold, i.e. \eqref{4estimate1} and~\eqref{4estimate2} with $u_{0,1},\,u_{0,2}$ replaced by $v^1_{0,n},\,v^2_{0,n}$ respectively.

Given the decomposition of the initial data, we can also express the solution into two parts, set
 \begin{align}
  U_n^1:= NS(v_{0,n}^1), \ \ \ U_n^2 = v_n - U_n^1.
 \end{align}
 Let us treat $U_n^1$ now, by choosing $\eta$ to be sufficiently small and applying local Cauchy theory of NS, see Theorem~\ref{subcribesov}, one can deduce that
\begin{align}
\|U_n^1\|_{\mathscr{K}^{s_m+\delta}_{m,\infty}(-2,0)} + \|\nabla U_n^1\|_{\mathscr{K}^{s_m+\delta-1}_{m,\infty}(-2,0)} & \leq C \|v_{0,n}^1\|_{\dot B^{s_m+\delta}_{m,m}}  \leq C(\eta, \widetilde{M}).  \\
 \|\partial_t U_n^1\|_{\mathscr{K}^{s_m+\delta-2}_{m,\infty}(-2,0)} & \leq C \|v_{0,n}^1\|_{\dot B^{s_m+\delta}_{m,m}}  \leq C(\eta, \widetilde{M})
\end{align}
Meantime,
\begin{align}
\|U_n^1\|_{L^{\infty}(-2,0; \dot B^{s_p}_{p,p})}  \leq C(\eta, \widetilde{M}). \label{subbdbesov}
\end{align}
For arbitrary $0<\sigma< 2$,
\begin{align} \label{4esitmate5}
\|U_n^1\|_{L^{\infty}(-2+\sigma,\,0; L^m)} + \|\nabla U_n^1\|_{L^{\infty}(-2+\sigma,\,0; L^m)} + \|\partial_t U_n^1\|_{L^{\infty}(-2+\sigma,\,0; L^m)} \leq C(\sigma, \eta, \widetilde{M}).
\end{align}
Furthermore, Proposition~\ref{linftybd} yields
\begin{align}
\|U_n^1\|_{L^{\infty}(-2+\sigma,\,0; L^{\infty})  } \leq C(\sigma, \eta, \widetilde{M}), \ \ \forall\, 0<\sigma<2. \label{4estimate3}
\end{align}
Let $Q_n^1$ be the pressure associated with $U_n^1$, then  Calderon-Zygmund estimate infers
\begin{align} \label{4estimate6}
\|Q_n^1\|_{L^{\infty}(-2+\sigma,\,0; L^{m/2})} \leq C(\sigma, \eta, \widetilde{M}).
\end{align}
Based on the above estimates over $U_n^1,\,Q_n^1$, we can show the following result.
 \begin{lem} \label{limitoffirst}
 There exist limit functions $u_{\infty}^1,\,q_{\infty}^1$, defined on $(-2,0)\times \R^d$, such that for any $0<\sigma<2,\,R>0$,  we have
\begin{itemize}
  \item[{\rm (1)}] $U_n^1 \rightharpoonup u_{\infty}^1$\ \  weakly* in \ $L^{\infty}(-2+\sigma,\,0; L^{m})$ \ and \ $L^{\infty}(-2+\sigma,\,0; L^{\infty})$;

  \item[{\rm (2)}] $\nabla U_n^1 \rightharpoonup \nabla u_{\infty}^1$\ \  weakly* in \ $L^{\infty}(-2+\sigma,\,0; L^{m})$;

  \item[{\rm (3)}] $ U_n^1 \rightarrow  u_{\infty}^1$ \ \  strongly in \ $C([-2+\sigma,\,0], L^{m}(B(R)))$;

  \item[{\rm (4)}] $U_n^1 \rightharpoonup u_{\infty}^1$\ \ weakly* in \ $L^{\infty}(-2,0; \dot B^{s_p}_{p,p})$;

  \item[{\rm (5)}] $Q_n^1 \rightharpoonup q_{\infty}^1$\ \ weakly* in \  $L^{\infty}(-2+\sigma,\,0; L^{m/2})$.
\end{itemize}
\end{lem}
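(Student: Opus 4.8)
The plan is to obtain $u_\infty^1$ and $q_\infty^1$ as weak-$*$ limits of subsequences extracted from the uniform bounds \eqref{subbdbesov}, \eqref{4esitmate5}, \eqref{4estimate3} and \eqref{4estimate6} via the Banach--Alaoglu theorem, and then to upgrade to the strong convergence in item (3) by an Aubin--Lions--Simon compactness argument. Throughout, all the limits are identified with one another by appealing to the uniqueness of the limit in the sense of distributions.

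First I would fix exhausting sequences $\sigma_k \downarrow 0$ and $R_k \uparrow \infty$. For each fixed $\sigma = \sigma_k$, estimate \eqref{4esitmate5} shows that $(U_n^1)_n$ and $(\nabla U_n^1)_n$ are bounded in $L^{\infty}(-2+\sigma_k,0;L^m)$, while \eqref{4estimate3} gives a uniform bound in $L^{\infty}(-2+\sigma_k,0;L^{\infty})$ and \eqref{4estimate6} a uniform bound of $(Q_n^1)_n$ in $L^{\infty}(-2+\sigma_k,0;L^{m/2})$; moreover \eqref{subbdbesov} furnishes a uniform bound in $L^{\infty}(-2,0;\dot B^{s_p}_{p,p})$ on the full interval. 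Each of these spaces is the dual of a separable Banach space, since $L^{\infty}(I;L^m)=(L^1(I;L^{m'}))^{*}$ for $1<m<\infty$ and $\dot B^{s_p}_{p,p}$ is reflexive with separable predual $\dot B^{-s_p}_{p',p'}$ for $1<p<\infty$. Hence the sequential Banach--Alaoglu theorem provides weak-$*$ convergent subsequences, and a diagonal extraction over $k$ yields a single subsequence (not relabelled) along which items (1), (2), (4) and (5) hold; the limits on the nested intervals $(-2+\sigma_k,0)$ are consistent as $k$ grows because they all agree with the distributional limit, which is unique. I denote the resulting limits by $u_\infty^1$ and $q_\infty^1$.

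The heart of the matter is the strong convergence in item (3), and this is where I expect the main work to lie. Here I would combine the spatial bound $\|U_n^1\|_{L^{\infty}(-2+\sigma,0;W^{1,m}(B(R)))}\le C(\sigma,\eta,\widetilde M)$, which follows from the $U_n^1$ and $\nabla U_n^1$ parts of \eqref{4esitmate5}, with the time-derivative bound $\|\partial_t U_n^1\|_{L^{\infty}(-2+\sigma,0;L^m)}\le C(\sigma,\eta,\widetilde M)$ from the same estimate. Since the embedding $W^{1,m}(B(R))\hookrightarrow L^m(B(R))$ is compact (Rellich--Kondrachov) while $L^m(B(R))\hookrightarrow L^m(B(R))$ is continuous, the Aubin--Lions--Simon lemma shows that $(U_n^1)_n$ is relatively compact in $C([-2+\sigma,0];L^m(B(R)))$. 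Passing to a further diagonal subsequence over the pairs $(\sigma_k,R_k)$ and identifying the strong limit with the already obtained weak-$*$ limit $u_\infty^1$ (again by uniqueness of distributional limits) delivers item (3).

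The principal obstacle is therefore not any single estimate but the organization of the two nested diagonal extractions over $\sigma_k$ and $R_k$, together with the verification that the limits produced in a priori different topologies on overlapping time intervals and spatial balls all coincide. This compatibility is guaranteed throughout by the fact that both the weak-$*$ limits and the strong limit must equal the unique limit in $\mathcal{D}'((-2,0)\times\R^d)$, which pins down a single pair $(u_\infty^1,q_\infty^1)$ enjoying all five convergence properties simultaneously.
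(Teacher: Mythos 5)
Your proposal is correct and takes essentially the same approach as the paper: the weak-$*$ convergences in (1), (2), (4), (5) are extracted from the uniform bounds \eqref{subbdbesov}, \eqref{4esitmate5}, \eqref{4estimate3}, \eqref{4estimate6} via Banach--Alaoglu, and (3) follows from the Aubin--Lions--Simon lemma applied to the $W^{1,m}(B(R))$ bound on $U_n^1$ together with the $L^m(B(R))$ bound on $\partial_t U_n^1$. The only differences are cosmetic: you keep $L^\infty$-in-time bounds where the paper passes to $L^r$ in time with $r>1$, and you spell out the diagonal extraction and the identification of the limits in the various topologies, which the paper leaves implicit.
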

\begin{proof}
Obviously, {\rm (1), (2), (3)} and {\rm (5)} follow directly from~\eqref{4estimate3}-\eqref{4estimate6}.  For any fixed $r>1$, note that
\begin{align}
U_n^1 \in L^r(-2+\sigma,\, 0; W^1_{m}(B(R))),\, \, \  \partial_t U_n^1 \in L^r(-2+\sigma,\, 0; L^m(B(R))).
\end{align}
Thus {\rm (3)} is a consequence of Aubin-Lions lemma (cf. \cite{Si87,Sere15}).
\end{proof}

We turn to the estimate of $U_n^2$. Observing that $U_n^2$ solves the following perturbed Navier-Stokes equation on domain $(-2,0)\times \R^d$,
\begin{equation*}
\begin{cases}
\partial_t U_n^2 - \Delta U_n^2 + U_n^2 \cdot\nabla U_n^2 + U_n^2 \cdot \nabla U_n^1 + U_n^1 \cdot \nabla U_n^2 + \nabla Q_n^2 =0,  & \\
{\rm div}\, U_n^2 =0, & \\
U_n^2(x,-2) = v^2_{0,n}(x). &
\end{cases}
\end{equation*}
Due to~\eqref{bdbesov} and~\eqref{subbdbesov}, one can easily find
\begin{align} \label{uniforbesov}
U_n^2 \in L^{\infty}(-2,0; \dot B^{s_p}_{p,p}(\R^d)).
\end{align}
Recall that $r$ is such that $2/r+ d/m =1$, we  can apply energy estimate again to see
\begin{align} \label{globalenergy1}
\|U_n^2(t)\|^2_{L^2} + \int_{-2}^t \|\nabla U^2_n\|^2_{L^2} ds \leq \|v^2_{0,n}\|^2_{L^2} \exp\left(C\int_{-2}^0 \|U_n^1\|_{L^m}^r ds\right) \leq C(\eta, \widetilde{M})
\end{align}
holds for all $t\in (-2,0)$. It follows from interpolating~\eqref{uniforbesov} and~\eqref{globalenergy1} that
\begin{align} \label{4estimate8}
\|U_n^2\|_{L^4(-2,0;L^4)} \leq C(\eta, \widetilde{M}).
\end{align}
The pressure $Q_n^2$ meets
\begin{align} \label{4estimate7}
\|Q_n^2\|_{L^2(-2,0;L^2)} \leq C(\eta, \widetilde{M}).
\end{align}
For more details on the above estimates of $U_n^2,\,Q_n^2$, one can refer to the proof of Proposition~\ref{bdatinf}.  The estimate of $\partial_t U_n^2$  can be done as follows: let $R>0$, $\phi \in C_0^{\infty}(B(R))$, then
\begin{align}
|\langle\partial_t U_n^2, \phi\rangle| &= \big|-\langle\nabla U_n^2, \nabla \phi \rangle + \langle U_n^2, U_n^2\cdot \nabla \phi \rangle + \langle U_n^1, U_n^2\cdot \nabla \phi\rangle  + \langle U_n^2, U_n^1\cdot \nabla \phi\rangle + \langle Q_n^2, {\rm div} \phi \rangle \big| \no  \\
& \leq C\big(\|\nabla U_n^2 \|_{L^2} + \|U^2_n\|^2_{L^4} + \|U_n^1 \otimes U_n^2\|_{L^2} + \|Q_n^2\|_{L^2} \big) \|\nabla \phi \|_{L^2}
\end{align}
Taking $L^2$ integral with respect to time over interval $[-2,0]$ and using Lemma~\ref{trilinearesti}, one  sees
\begin{align}
\|\partial_t U_n^2\|_{L^2(-2,0; H^{-1}(B(R)))} \leq C(\eta, \widetilde{M}), \ \ \ \forall\, R>0.
\end{align}
where $H^{-1}(B(R))$ is the dual space of $H_0^1(B(R))$. Besides, $U_n^2 $ satisfies the local energy equality with lower order terms:
\begin{align} \label{leewithlower}
\partial_t|U_n^2|^2 - \Delta |U_n^2|^2 + 2 |\nabla U_n^2|^2 + &
{\rm div}\, (|U_n^2|^2 U_n^2+ |U_n^2|^2 U_n^1) \no \\
& + 2 U_n^2 {\rm div}\,(U_n^2\otimes U_n^1) + 2{\rm div}\,(U_n^2 Q_n^2) = 0,
\end{align}
which can be interpreted in the sense of distributions. Collecting the estimates of $U_n^2$ and $Q_n^2$ and taking the estimates of $U_n^1$ into consideration, we can claim the conclusion below.
\begin{lem} \label{energyerror}
There exist limit functions $u_{\infty}^2$ and $q_{\infty}^2$ defined on $(-2,0) \times \R^d$, satisfying
\begin{itemize}
  \item[{\rm (1)}] $U_n^2 \rightharpoonup u_{\infty}^2 $ \ weakly* in \ $L^{\infty}(-2,0; \dot B^{s_p}_{p,p})$\ and \ $L^{\infty}(-2,0; L^2)$;

  \item[{\rm (2)}] $\nabla U_n^2 \rightharpoonup \nabla u_{\infty}^2 $ \ weakly in \ $L^{2}(-2,0; L^2)$;

  \item[{\rm (3)}] $U_n^2 \rightarrow u_{\infty}^2$\  strongly in \ $L^{\beta}(-2,0;L^{\beta}(B(R)))$ and $C([-2,0];H^{-1}(B(R)))$, \ \ $\forall\, 1\leq \beta <4,\, R>0$;

  \item[{\rm (4)}] $Q_n^2 \rightharpoonup q_{\infty}^2 $\ weakly in \ $L^{2}(-2,0; L^2)$;

  \item[{\rm (5)}] The following local energy inequality
  \begin{align} \label{leiwithlower}
  &\int_{B(R)} |u_{\infty}^2(x,t)|^2 \phi(x,t) dx + 2\int_{-2}^t \int_{B(R)} |\nabla u_{\infty}^2|^2 \phi(x,\tau) dxd\tau  \no \\
  & \leq \int_{-2}^t \int_{B(R)}|u_{\infty}^2|^2(\partial_{\tau} \phi + \Delta \phi) + u_{\infty}^2 \cdot \nabla \phi (|u_{\infty}^2|^2 + 2q_{\infty}^2) + |u_{\infty}^2|^2 u_{\infty}^1 \cdot \nabla \phi  dxd\tau \no \\
   & \qquad \qquad  + 2 \int_{-2}^t \int_{B(R)} (u_{\infty}^1 \cdot u_{\infty}^2) u_{\infty}^2 \cdot \nabla \phi + (u_{\infty}^2 \cdot \nabla u_{\infty}^2) u_{\infty}^1 \phi dxd\tau
  \end{align}
  holds for any $t\in (-2,0)$ and $0\leq \phi \in C_0^{\infty}((-2,0)\times B(R))$, here, $R>0$ is arbitrary.
\end{itemize}
\end{lem}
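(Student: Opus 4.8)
The plan is to read this as a routine weak/strong compactness argument: extract a single subsequence along which all the listed convergences hold, and then pass to the limit in the local energy \emph{equality} \eqref{leewithlower} for $U_n^2$, the only degradation into an inequality coming from the dissipation and time-slice terms through weak lower semicontinuity. The one genuinely dimension/problem-specific feature is that $U_n^2$ does not solve $\eqref{ns}$ but a perturbed system, so the limiting inequality \eqref{leiwithlower} carries extra lower-order terms built from $u_\infty^1$; these must be controlled using the strong local convergence of $U_n^1$ provided by Lemma~\ref{limitoffirst}.

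First I would gather the uniform-in-$n$ bounds already established: \eqref{uniforbesov} in $L^\infty(-2,0;\dot B^{s_p}_{p,p})$, the energy bound \eqref{globalenergy1} in $L^\infty(-2,0;L^2)\cap L^2(-2,0;\dot H^1)$, the bound \eqref{4estimate8} in $L^4(-2,0;L^4)$, the pressure bound \eqref{4estimate7} in $L^2(-2,0;L^2)$, and the established bound on $\partial_t U_n^2$ in $L^2(-2,0;H^{-1}(B(R)))$. Since the first four are global in space, Banach--Alaoglu directly produces a subsequence (not relabeled) with $U_n^2\rightharpoonup u_\infty^2$ weakly$^*$ in $L^\infty(-2,0;\dot B^{s_p}_{p,p})$ and in $L^\infty(-2,0;L^2)$, and $\nabla U_n^2\rightharpoonup g$, $Q_n^2\rightharpoonup q_\infty^2$ weakly in $L^2(-2,0;L^2)$, giving (1) and (4). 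The identification $g=\nabla u_\infty^2$, hence (2), follows once the strong $L^2$ convergence below is in hand, because strong convergence forces distributional convergence of the gradients.

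For (3) I would invoke the Aubin--Lions--Simon lemma along the compact chain $H^1(B(R))\hookrightarrow\hookrightarrow L^2(B(R))\hookrightarrow H^{-1}(B(R))$, using the uniform bounds on $U_n^2$ in $L^2(-2,0;H^1(B(R)))$ and on $\partial_t U_n^2$ in $L^2(-2,0;H^{-1}(B(R)))$, together with a diagonal argument over $R\in\N$. This yields strong convergence in $L^2(-2,0;L^2(B(R)))$ and in $C([-2,0];H^{-1}(B(R)))$; interpolating the strong $L^2$ convergence against the uniform $L^4$ bound \eqref{4estimate8} (and Hölder for $\beta<2$) upgrades it to strong convergence in $L^\beta(-2,0;L^\beta(B(R)))$ for all $1\le\beta<4$. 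A consequence I will need is that the $C([-2,0];H^{-1})$ convergence paired with the $L^\infty(L^2)$ bound forces $U_n^2(t)\rightharpoonup u_\infty^2(t)$ weakly in $L^2(B(R))$ for \emph{every} $t$, which controls the time-slice term.

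Finally, for (5) I would test \eqref{leewithlower} against a nonnegative $\phi\in C_0^\infty((-2,0)\times B(R))$ and let $n\to\infty$ term by term. The terms $|U_n^2|^2(\partial_\tau\phi+\Delta\phi)$ and the cubic $|U_n^2|^2U_n^2\cdot\nabla\phi$ converge by the strong $L^\beta$ convergence ($\beta=2,3<4$); the pressure term $U_n^2Q_n^2$ converges as a strong--weak product using the weak $L^2$ limit of $Q_n^2$; the mixed terms involving $U_n^1$ (namely $|U_n^2|^2U_n^1$, $(U_n^1\cdot U_n^2)U_n^2$, and the term producing $(u_\infty^2\cdot\nabla u_\infty^2)u_\infty^1\phi$) converge because $U_n^1\to u_\infty^1$ strongly on compacta by Lemma~\ref{limitoffirst}, paired with the strong $L^2$/$L^4$ convergence of $U_n^2$ and, in the last case, the weak $L^2$ convergence of $\nabla U_n^2$. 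The crux is the remaining two terms on the left: since $\nabla U_n^2$ converges only weakly, the convexity and weak lower semicontinuity of $v\mapsto\int\phi\,|v|^2$ for $\phi\ge0$ give $2\int_{-2}^t\!\int\phi\,|\nabla u_\infty^2|^2\le\liminf 2\int_{-2}^t\!\int\phi\,|\nabla U_n^2|^2$, and the same semicontinuity applied at fixed time to $\int_{B(R)}\phi\,|U_n^2(t)|^2$ (via the pointwise-in-$t$ weak $L^2$ convergence noted above) turns the equality into the asserted inequality. I expect this lower-semicontinuity step, together with the bookkeeping needed to pass the $U_n^1$-coupled terms through mixed strong--weak limits, to be the main obstacle; everything else is standard compactness.
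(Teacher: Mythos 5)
Your proposal is correct and follows essentially the same route as the paper: weak-$*$ compactness from the uniform bounds for (1), (2), (4); Aubin--Lions on $B(R)$ (using the $L^2H^1$ and $\partial_t U_n^2\in L^2H^{-1}$ bounds) plus interpolation with the uniform $L^4$ bound for (3); and passage to the limit in the local energy equality \eqref{leewithlower} for (5), with weak lower semicontinuity handling the dissipation and time-slice terms. The paper states this last step in one sentence, so your spelled-out treatment of the strong--weak products and the pointwise-in-$t$ weak $L^2$ convergence is exactly the detail it leaves implicit.
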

\begin{proof}
It is easy to see that ${\rm (1),\,(2)}$ and ${\rm (4)}$ follows from~\eqref{globalenergy1}, \eqref{4estimate7}.  The validity of ${\rm (3)}$ can be argued as follows:
\begin{align}
U_n^2 \in L^2(-2,0; H^1(B(R))), \ \ \ \partial_t U_n^2 \in L^2(-2,0; H^{-1}(B(R))),
\end{align}
Appealing to the Aubin-Lions Lemma once again, we obtain
\begin{align}
U_n^2 \rightarrow u_{\infty}^2 \ \ \ \textrm{strongly in }\ \ L^2(-2,0;L^2(B(R))) \ \ \textrm{and} \ \ C([-2,0]; H^{-1}(B(R))).
\end{align}
Then interpolation with~\eqref{4estimate8} leads to the required result. Finally, using the fact that $U_n^2$ satisfies~\eqref{leewithlower} distributionally and the convergence properties of $U_n^1$(see Lemma~\ref{limitoffirst}) and $U_n^2,\,Q_n^2$, one can deduce ${\rm (5)}$.
\end{proof}

Recalling that $v_n = U_n^1 + U_n^2$,  $q_n= Q_n^1 + Q_n^2$. Thanks to  Lemma~\ref{limitoffirst} and Lemma~\ref{energyerror}, one can formulate the limit behavior of
$v_n$ and $q_n$ into the following statement.
\begin{prop} \label{limitpro}
There exist limit functions $v_{\infty},\, q_{\infty}$, defined on domain $(-2,0) \times \R^d$,  with $v_{\infty}= u_{\infty}^1 + u_{\infty}^2$, $q_{\infty}= q_{\infty}^1 + q_{\infty}^{2}$, such that for any $0<\sigma <2,\, R>0$, the following properties
hold.
    \begin{itemize}
      \item[\rm (i)] $v_n \rightharpoonup v_{\infty}$  \textrm{weakly* in }   $L^{\infty}(-2,0; \dot B^{s_p}_{p,p})$ and\
      $L^{\infty}(-2+\sigma,\,0; L^2(B(R)))$;

      \item[\rm (ii)]  $v_n \rightarrow v_{\infty} $ strongly in $L^{\beta}(-2+\sigma,\,0; L^{\beta}(B(R)))$, \ $\forall\,1\leq \beta< 4$;

      \item[\rm (iii)] $\nabla v_n \rightharpoonup \nabla v_{\infty} $ weakly in $L^2(-2+\sigma,\, 0;L^2(B(R)))$;

      \item[\rm (iv)] For every $t \in [-2+\sigma,\,0]$, $
      \psi(x) v_n(x,t) \rightharpoonup \psi(x) v_{\infty}(x,t)$ weakly in $L^2(\R^d)$, and the function $t\mapsto \int_{\R^d} v_{\infty}(x,t)\psi(x) dx\  \in C([-2+\sigma,\,0])$, here $\psi \in C_0^{\infty}(\R^d)$;

      \item[\rm (v)] $q_n \rightharpoonup q_{\infty}$ \ weakly in \ $L^{3/2}(-2+\sigma,\,0; L^{3/2}(B(R)))$;

      \item[\rm (vi)] $v_{\infty},\, q_{\infty} $ forms a pair of suitable weak solution on any bounded domain of $(-2+\sigma,\,0)\times \R^d$.
     \end{itemize}

\end{prop}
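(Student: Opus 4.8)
The plan is to \emph{define} $v_{\infty}:=u_{\infty}^1+u_{\infty}^2$ and $q_{\infty}:=q_{\infty}^1+q_{\infty}^2$ and then read off (i)--(v) by superposing the two families of convergences recorded in Lemma~\ref{limitoffirst} and Lemma~\ref{energyerror}. The only point to watch is that $U_n^1$ converges in the stronger topology $L^{\infty}(-2+\sigma,0;L^m)\cap L^{\infty}(-2+\sigma,0;L^{\infty})$ (away from the initial time), while $U_n^2$ converges in $L^{\infty}(-2,0;L^2)$ and strongly in $L^{\beta}(B(R))$; restricting to $t\ge-2+\sigma$ and to a ball $B(R)$ both summands lie in $L^2(B(R))$, and since $m>d\ge4$ the gradients of both summands sit in $L^2_{loc}$ and the pressures in $L^{3/2}_{loc}$, so the sums converge in the spaces claimed in (i), (iii), (v). For (ii) I would combine the strong $L^{\beta}$ convergence of $U_n^2$ (interpolation of \eqref{globalenergy1} and \eqref{4estimate8}) with the strong $C([-2+\sigma,0];L^m(B(R)))$ convergence and the $L^{\infty}$ bound of $U_n^1$, so that $v_n\to v_{\infty}$ strongly in $L^{\beta}(-2+\sigma,0;L^{\beta}(B(R)))$ for every $1\le\beta<4$. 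The weak-$*$ convergence in $L^{\infty}(-2,0;\dot B^{s_p}_{p,p})$ and the weak time continuity (iv) are additive and follow from Lemma~\ref{limitoffirst}(3),(4) and Lemma~\ref{energyerror}(1),(3).

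For (vi) I would first check that $v_{\infty},q_{\infty}$ solve \eqref{ns} distributionally by passing to the limit in the weak formulation of NS for the smooth functions $v_n$: the linear terms pass by weak convergence, the quadratic term $\int v_n\otimes v_n:\nabla\phi$ passes because $v_n\to v_{\infty}$ strongly in $L^3_{loc}$ (whence $v_n\otimes v_n\to v_{\infty}\otimes v_{\infty}$ in $L^{3/2}_{loc}$), and the pressure term passes by the weak$\times$strong product $q_n v_n\rightharpoonup q_{\infty}v_{\infty}$. The integrability demanded by Definition~\ref{defsws}(1) is then read off from (i)--(v); moreover $v_{\infty}\in L^{\infty}(-2,0;\dot B^{-1}_{\infty,\infty})$ by (i) and the embedding $\dot B^{s_p}_{p,p}\hookrightarrow\dot B^{-1}_{\infty,\infty}$, which supplies exactly the a priori hypothesis attached to the definition.

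The hard part is the local energy inequality for $v_{\infty}$. Rather than passing to the limit directly in an energy identity for $v_n$ (which would force control of the instantaneous energy $\int|v_n(t)|^2\phi\,dx$ at a fixed time), I would exploit the splitting. The limit $u_{\infty}^1$ is a bounded, regular solution (it inherits the estimates behind Lemma~\ref{limitoffirst} and is smooth on $(-2,0)\times\R^d$ by the mechanism of Proposition~\ref{linftybd}), hence it satisfies its \emph{energy equality} exactly, while $u_{\infty}^2$ already satisfies the local energy inequality with lower-order corrections \eqref{leiwithlower}. The missing mixed quadratic contributions $2\int u_{\infty}^1\cdot u_{\infty}^2\,\phi$ and $4\iint\nabla u_{\infty}^1:\nabla u_{\infty}^2\,\phi$ I would generate from a \emph{cross-energy identity}, obtained by testing the $u_{\infty}^1$-equation against $2u_{\infty}^2\phi$ and the $u_{\infty}^2$-equation against $2u_{\infty}^1\phi$ and adding; the smoothness of $u_{\infty}^1$ on $(-2+\sigma,0)\times\R^d$ together with $u_{\infty}^2\in L^{\infty}(L^2)\cap L^2(\dot H^1)\cap L^4$ makes every product in this identity locally integrable. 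Adding the energy equality for $u_{\infty}^1$, the inequality \eqref{leiwithlower} for $u_{\infty}^2$, and this cross identity, and expanding $|v_{\infty}|^2$, $|\nabla v_{\infty}|^2$, the cubic term and the pressure term according to $v_{\infty}=u_{\infty}^1+u_{\infty}^2$ and $q_{\infty}=q_{\infty}^1+q_{\infty}^2$, I expect all the lower-order correction terms to cancel and leave precisely \eqref{locaenerineq} for the pair $(v_{\infty},q_{\infty})$. The main obstacle is exactly this term-by-term algebraic cancellation together with the justification of the cross-energy identity, since it is here, and not in the assembling of the weak limits, that the structure of the perturbed equations governing $U_n^1$ and $U_n^2$ is genuinely used.
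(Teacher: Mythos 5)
Your proposal is correct, and for items (i)--(v) it coincides with the paper's proof (the paper simply observes that these follow by superposing Lemma~\ref{limitoffirst} and Lemma~\ref{energyerror}); the genuine divergence is in (vi), where you take a different route from the paper. The paper's argument is precisely the direct limit you chose to avoid: each $v_n$ is a \emph{smooth} solution of NS on $(-2,0)\times\R^d$, hence satisfies the local energy \emph{equality}, and one lets $n\to\infty$ in it --- the terms $|v_n|^2(\partial_t\phi+\Delta\phi)$ and $|v_n|^2v_n\cdot\nabla\phi$ converge by the strong $L^3_{loc}$ convergence (ii), the pressure term $q_nv_n\cdot\nabla\phi$ converges by pairing the weak convergence (v) with the strong convergence (ii), and the two left-hand terms are handled by weak lower semicontinuity, via (iii) for the dissipation and (iv) for the instantaneous energy. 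In particular, your stated reason for the detour is not an actual obstruction: (iv) gives $\psi v_n(t)\rightharpoonup\psi v_\infty(t)$ in $L^2$ for \emph{every} $t$, so $\int\phi\,|v_\infty(t)|^2dx\le\liminf_n\int\phi\,|v_n(t)|^2dx$, which is exactly the direction needed on the left-hand side of \eqref{locaenerineq}. Your alternative --- exact energy equality for the regular part $u_\infty^1$, inequality \eqref{leiwithlower} for $u_\infty^2$, plus a cross-energy identity --- does close: expanding, the terms $2\iint(u_\infty^2\cdot\nabla u_\infty^2)\cdot u_\infty^1\phi$ cancel between \eqref{leiwithlower} and the cross identity, and the remaining terms reassemble into the quantities for $v_\infty=u_\infty^1+u_\infty^2$, $q_\infty=q_\infty^1+q_\infty^2$, with the inequality inherited from \eqref{leiwithlower}. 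But this costs steps that Lemmas~\ref{limitoffirst} and~\ref{energyerror} do not record and that you would have to supply: that $u_\infty^1$ solves NS with pressure $q_\infty^1$ and that $u_\infty^2$ solves the perturbed system with pressure $q_\infty^2$ (both are needed before the cross identity can even be written down), plus the justification of testing the distributional $u_\infty^2$-equation against $2u_\infty^1\phi$ (legitimate, since $u_\infty^1$ is smooth and bounded away from $t=-2$, $u_\infty^2\in L^\infty L^2\cap L^2\dot H^1\cap L^4_{t,x}$, and $u_\infty^2$ is weakly $L^2$-continuous in time for the endpoint term). One small slip worth noting: in your verification that $(v_\infty,q_\infty)$ solves NS distributionally, the pressure enters linearly as $\iint q_n\,{\rm div}\,\phi$, so weak convergence of $q_n$ alone suffices; the weak-times-strong pairing is only needed for the pressure term in the energy inequality. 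In sum, both routes are valid: the paper's is shorter and uses nothing beyond (i)--(v), while yours derives suitability intrinsically from the limit objects at the price of the extra bookkeeping that you yourself flag as the main obstacle.
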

\begin{proof}
The verification of ${\rm (i)}-{\rm (v)}$ is straightforward, provided one notice relevant properties of $U_n^1$ and $U_n^2$. Now that $v_n$ and $q_n$ is a pair of smooth solution to NS, so it fulfills the local energy equality, taking $n \to \infty$ and using ${\rm (i)}-{\rm (v)}$, one can find ${\rm (vi)}$ follows.
\end{proof}

%%%%%%%%%%%%%%%%%%%%%%%%%%%%%%%%%%%%%%%%%%%%%%%%%%%%%%%%%%%%%%%%%%%%%%%%%%%%%%%%%%%%%%%%%%%%%%%%%%%%%%%%%%%%%%%%%%%%%%%%%%%%%%%%%%%%%%%
%%%%%%%%%%%%%%%%%%%%%%%%%%%%%%%%%%%%%%%%%%%%%%%%%%%%%%%%%%%%%%%%%%%%%%%%%%%%%%%%%%%%%%%%%%%%%%%%%%%%%%%%%%%%%%%%%%%%%%%%%%%%%%%%%%%%%%%
%%%%%%%%%%%%%%%%%%%%%%%%%%%%%%%%%%%%%%%%%%%%%%%%%%%%%%%%%%%%%%%%%%%%%%%%%%%%%%%%%%%%%%%%%%%%%%%%%%%%%%%%%%%%%%%%%%%%%%%%%%%%%%%%%%%%%%%

\subsection{Proof of Theorem~\ref{mildblowupbesov}}

In this section, we shall prove the blowup criterion for NS in critical Besov space. Let $v_n, q_n, v_{\infty}$, $q_{\infty}$ be the functions constructed in Section~\ref{prioriestimate}. We draw on ideas from~\cite{DoDu09,EsSeSv03},  showing first the limit function $v_{\infty}$ vanishes for some time, then
using the strong convergence property of $v_n$ and an interior estimate of $q_n$ to yield that for some small  $\gamma>0$ and large $n_0$, the pair $v_{n_0},\,q_{n_0}$ verifies the condition of $\epsilon$ regularity criterion on $Q(\gamma)$, thus producing  the boundedness of $u$ at the singular point, which is obviously absurd.  Now we start to  implement this argument.
\begin{prop} \label{vanishlimit}
Let $v_{\infty}, q_{\infty}$ be the limit solution obtained in Proposition~\ref{limitpro}, then
\begin{align}
v_{\infty}(x,t) =0,  \ \ \textrm{for } \ \ t\in (-5/4,0].
\end{align}
\end{prop}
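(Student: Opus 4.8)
The plan is to establish the vanishing in two stages. First I would identify the weak value of $v_\infty$ at the endpoint time $t=0$ and show that it is zero, exploiting the weak convergence $u(t_n)\rightharpoonup u^*$ in $\dot B^{s_p}_{p,p}$ together with the parabolic rescaling. Second, with zero data at $t=0$ in hand, I would propagate the vanishing backward in time on the interval $(-5/4,0]$ by a backward-uniqueness and unique-continuation argument in the spirit of~\cite{EsSeSv03,DoDu09}.

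For the first stage, observe that evaluating the rescaling at the endpoint gives $v_n(\cdot,0)=\lambda_n u(X_0+\lambda_n\,\cdot\,,T_*)=\lambda_n u^*(X_0+\lambda_n\,\cdot\,)$, so the question becomes whether the rescaled profile of the \emph{fixed} function $u^*\in\dot B^{s_p}_{p,p}$ tends weakly to zero as $\lambda_n\to0$. Testing against $\phi\in\mathcal S(\R^d)$ and changing variables yields $\langle v_n(\cdot,0),\phi\rangle=\langle u^*,\phi_n\rangle$ with $\phi_n(y)=\lambda_n^{1-d}\phi((y-X_0)/\lambda_n)$. By the scaling of the Besov norm the dual norm $\|\phi_n\|_{\dot B^{-s_p}_{p',p'}}$ is independent of $n$ (the exponent $1-d+d/p'+s_p$ computes to $0$), while the support of $\phi_n$ shrinks to $\{X_0\}$. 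Since $p'<\infty$, such a bounded, concentrating sequence converges weakly to zero in $\dot B^{-s_p}_{p',p'}$, and pairing it against the fixed element $u^*$ of the dual forces $\langle u^*,\phi_n\rangle\to0$. Combined with the continuity of $t\mapsto\int_{\R^d}v_\infty(\cdot,t)\psi\,dx$ up to $t=0$ from Proposition~\ref{limitpro}~(iv), this gives $v_\infty(\cdot,0)=0$.

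For the second stage, I would first record that $v_\infty$ is regular enough on a backward neighbourhood of $t=0$ to run the ESS machinery: the component $u_\infty^1$ is bounded by Lemma~\ref{limitoffirst}~(1), and since $v_\infty\in L^\infty(-2,0;\dot B^{s_p}_{p,p})\hookrightarrow L^\infty(-2,0;\dot B^{-1}_{\infty,\infty})$ forms a suitable weak solution (Proposition~\ref{limitpro}~(vi)), the $\epsilon$-regularity theory of Theorem~\ref{regularitycriterion}, through Corollary~\ref{modifiedversionregu}, upgrades $v_\infty$ to a bounded smooth field with the spatial decay of $v_\infty$ and $\nabla v_\infty$ inherited from the energy part $u_\infty^2$. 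Setting $w=\nabla\times v_\infty$, the vorticity satisfies a differential inequality $|\partial_t w-\Delta w|\le C(|w|+|\nabla w|)$ with $w(\cdot,0)=0$. Applying the backward-uniqueness theorem of~\cite{EsSeSv03} on an exterior region and then unique continuation in the interior yields $w\equiv0$, and a divergence-free, curl-free, decaying field must vanish identically on the interval, which is the asserted range $(-5/4,0]$ once one checks where the required bounds hold.

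The main obstacle is the backward-uniqueness step: producing, uniformly on a fixed interval ending at $0$, the boundedness and spatial decay of $v_\infty$ and $\nabla v_\infty$ demanded by the Carleman-estimate hypotheses of~\cite{EsSeSv03} is delicate, and it is exactly here that the higher-dimensional $\epsilon$-regularity of Theorem~\ref{regularitycriterion} must be combined with the global bounds on $u_\infty^1$ and $u_\infty^2$. A secondary technical point is justifying the weak convergence at the endpoint time, namely that $v_n(\cdot,0)=\lambda_n u^*(X_0+\lambda_n\,\cdot\,)$ really determines $v_\infty(\cdot,0)$, which I would handle using the uniform bounds of Proposition~\ref{limitpro} together with the explicit form of $v_n(\cdot,0)$.
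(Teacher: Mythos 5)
Your first stage is sound and is essentially the paper's own argument: the paper likewise writes $\langle v_n(0),\varphi\rangle=\lambda_n^{-(d-1)}\langle u(X_0+\cdot\,,T_*),\varphi(\lambda_n^{-1}\cdot)\rangle$, approximates $u(\cdot,T_*)$ in $\dot B^{s_p}_{p,p}$ by some $u_{\epsilon}\in C_0^{\infty}$ (this density argument is what actually justifies your claim that a bounded concentrating sequence pairs to zero against a fixed dual element), and uses the scale invariance of $\|\varphi(\lambda_n^{-1}\cdot)\|_{\dot B^{-s_p}_{p',p'}}$ together with $\lambda_n\|u_{\epsilon}\|_{L^{\infty}}\|\varphi\|_{L^1}\to 0$. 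Combined with Proposition~\ref{limitpro}~(iv) this gives $v_{\infty}(\cdot,0)=0$, exactly as in the paper.

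The gap is in your second stage, at precisely the point you flag as ``delicate'' but then assert rather than prove. You claim that $\epsilon$-regularity (Theorem~\ref{regularitycriterion} via Corollary~\ref{modifiedversionregu}) ``upgrades $v_\infty$ to a bounded smooth field'' so that unique continuation can be run in the interior. This cannot work as stated: the $\epsilon$-regularity criterion applies only where the scaled smallness \eqref{scalesmaconst} is verified, and the integrability of the pieces $u_{\infty}^1\in L^{\infty}(-5/3,0;L^m)$, $u_{\infty}^2\in L^3(-5/3,0;L^3)$, $q_{\infty}^1\in L^\infty L^{m/2}$, $q_{\infty}^2\in L^2L^2$ produces that smallness only in the exterior of a large ball $B(R_1)$, where the tails of the space integrals are small. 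That is how the paper obtains exterior boundedness, higher derivatives (via Stokes regularity), and then runs backward uniqueness \emph{outside} $B(2R_1)$. In the interior---in particular near the space-time origin, which is exactly the rescaled singular point $Z_0$---no smallness is available, $\epsilon$-regularity gives nothing, and a priori $v_\infty$ could be singular there. The paper closes this hole with a different mechanism that your proposal lacks entirely: for almost every $t_1\in(-5/4,0)$ it selects a slice where the global energy inequality holds and $u_{\infty}^2(t_1)\in L^2\cap\dot B^{s_p}_{p,p}$, constructs by Picard iteration a mild solution $\tilde v$ of the \emph{perturbed} Navier--Stokes system (with drift $u_{\infty}^1$) on $(t_1,t_1+\kappa)$, bounded for $t>t_1+\sigma$, proves a weak--strong uniqueness statement for this perturbed system to identify $u_{\infty}^2=\tilde v$, and only then has the interior smoothness required to apply unique continuation---and only on the short interval $(t_1+2\sigma,t_1+\kappa)$. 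The conclusion $v_\infty=0$ is reached there by combining ${\rm curl}\,v_\infty={\rm div}\,v_\infty=0$ with Liouville's theorem and the decay \eqref{reguconstant}, and is then propagated to all of $(-5/4,0]$ using weak continuity over the admissible $t_1$ and $\sigma\to 0$. Without this weak--strong uniqueness step (or some substitute producing interior regularity on intervals exhausting $(-5/4,0]$), your unique-continuation step has no regularity hypothesis to stand on, so the argument as written does not close.
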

\begin{proof}
Observing that $v_{\infty} = u_{\infty}^1 + u_{\infty}^2$, $q_{\infty} = q_{\infty}^1 + q_{\infty}^{2}$, with
\begin{align}
&  u_{\infty}^1  \in L^{\infty}(-5/3,0; L^m), \ \ u_{\infty}^2 \in L^3(-5/3,0; L^3), \no  \\
& q_{\infty}^1 \in L^{\infty}(-5/3,0; L^{m/2}), \ \   q_{\infty}^{2} \in L^{2}(-5/3,0; L^{2}). \no
\end{align}
So for any  $\epsilon_0, \rho >0$,  there exists some $R_0>0$ large, such that
\begin{align} \label{reguconstant}
\sup_{z\in \mathbb{R}^d}\int_{-5/3}^0 \int_{B(R_0)^c \cap B(z,\rho)} |v_{\infty}|^3 + | q_{\infty}|^{\frac{3}{2}} dxdt < \epsilon_0.
\end{align}
Let $z_0 = (x_0, t_0) \in \R^d\times (-3/2,0)$, $\rho = 1/4$,  then $Q(z_0,\rho) \subset \R^d \times (-5/3,0)$. The value
\begin{align}
\frac{1}{\rho^{d-1}} \int_{Q(z_0,\rho)} |v_{\infty}|^3 + |q_{\infty}|^{\frac{3}{2}} dxdt  \leq C \epsilon_0.
\end{align}
provided $|x_0|>\rho + R_0:=R_1$. Now one can specify $\epsilon_0$ so that $C\epsilon_0 < \tilde{\epsilon}_1$, in view of Corollary~\ref{modifiedversionregu}, we know $v_{\infty}$ is bounded in some neighborhood of $z_0$, and hence
\begin{align}
\sup_{-3/2<t<0}\sup_{B(R_1)^c} |v_{\infty}(x,t)| <\infty.
\end{align}
Upon using the regularity results for linear Stokes systems, one can acquire higher order derivatives estimates
\begin{align}
|\nabla^j v_{\infty}(x,t)| \leq N(j),
\end{align}
with $j \geq 1$ and $(x,t) \in B(2 R_1)^c \times (-5/4,0)$.

Next we show $v_{\infty}(x,0)$ vanishes, one can also refer to the same argument in~\cite{albritton16}.  Due to {\rm (iv)} in Proposition~\ref{limitpro}, we know $v_n(0)\rightharpoonup  v_{\infty}(0)$ in the sense of tempered distribution.  In addition,
 $ u(\cdot, T_*) \in \dot B^{s_p}_{p,p}$, so for any $ \epsilon > 0$, there exists $u_{\epsilon} \in C_{0}^{\infty}(\R^d)$, such that
 \begin{align}
    \|u_{\epsilon}(\cdot) - u(\cdot, T_*)\|_{\dot B^{s_p}_{p,p}} < \epsilon.
 \end{align}
 Let $\varphi $ be a Schwartz function, then
 \begin{align}
  \langle v_n(0), \varphi \rangle &= \lambda_n^{-(d-1)} \langle u(X_0+ x, T_*), \varphi(\lambda_n^{-1}x) \rangle  \no \\
               & = \lambda_n^{-(d-1)} \langle u(X_0+ x, T_*)-u_{\epsilon}(X_0+x), \varphi(\lambda_n^{-1}x)\rangle +
                \lambda_n^{-(d-1)} \langle u_{\epsilon}(X_0+x), \varphi(\lambda_n^{-1}x)\rangle, \no \\
                & \leq \|u(T_*)-u_{\epsilon}\|_{\dot B^{s_p}_{p,p}} \|\varphi\|_{\dot B^{-s_p}_{p',p'}}
                     + \lambda_n \|u_{\epsilon}\|_{L^{\infty}} \|\varphi\|_{L^1} \leq C \epsilon  \no
 \end{align}
 provided  $n$ is sufficiently large.  Hence,
 \begin{align}
  \langle v_{\infty}(0), \varphi \rangle = \lim_{n \to \infty} \langle v_{n}(0), \varphi \rangle  =0.
 \end{align}
 As $\varphi \in \mathcal{S}$ is arbitrary, so $v_{\infty}(0) = 0$, as desired.  Now we denote $\omega_{\infty} = {\rm curl}\,u_{\infty}$, then $\omega_{\infty}$ meets
 the differential inequality
 \begin{align}
   |\partial_t \omega_{\infty} - \Delta \omega_{\infty} |\leq N (|\omega_{\infty}| + |\nabla \omega_{\infty}|)
 \end{align}
 on $ B(0,2R_1)^c \times(-5/4,0] $ and $\omega_{\infty}(x,0) =0$. Applying the backward uniqueness theorem (\cite{EsSeSv03}), we reach
 \begin{align}
   \omega_{\infty}(z) =0 \ \ \textrm{on} \ \ B(0,2R_1)^c \times (-5/4,0].
 \end{align}
We continue to establish the regularity of $v_{\infty}$ on $B(0,2R_1) \times(-5/4,0]$. Note that $v_{\infty} = u_{\infty}^1 + u_{\infty}^2$, and 
\begin{align} \label{bound1}
u_{\infty}^1 \in L^{\infty}(-5/4,0; L^{\infty}),
\end{align}
  thus it is reduced to bound $u_{\infty}^2$.  First, the local energy inequality~\eqref{leiwithlower}  implies the following the global energy inequality:
\begin{align} \label{energyglobal}
& \int_{\R^d} |u_{\infty}^2(t_2)|^2 dx + 2 \int_{t_1}^{t_2}\int_{\R^d} |\nabla u_{\infty}^2|^2 dxdt  \no \\
& \leq \int_{\R^d} |u_{\infty}^2(t_1)|^2 dx +  2\int_{t_1}^{t_2} \int_{\R^d} u_{\infty}^1 \otimes u_{\infty}^2:\nabla u_{\infty}^2 dxdt
\end{align}
with almost every $t_1>-2$ and all $t_1\leq t_2<0$, see~\cite{albritton16} for the proof. On the other hand,
\begin{align}
u_{\infty}^2 \in L^{\infty}(-2,0;\dot B^{s_p}_{p,p}) \cap L^{\infty}(-2,0; L^2).
\end{align}
we can choose $t_1\in (-5/4,0]$ so that that~\eqref{energyglobal} holds and $u^2_{\infty}(t_1)\in L^2\cap \dot B^{s_p}_{p,p}$. Considering the equation below
\begin{equation*}
\begin{cases}
\partial_t \tilde{v} - \Delta  \tilde{v} +  \tilde{v} \cdot\nabla  \tilde{v} + u_{\infty}^1 \cdot \nabla \tilde{v} + \tilde{v} \cdot \nabla u_{\infty}^1 + \nabla \tilde{q} =0,  & \\
{\rm div}\, \tilde{v} =0, & \\
\tilde{v}(x,t_1) = u_{\infty}^2(t_1). &
\end{cases}
\end{equation*}
By a standard Picard iteration procedure, one can construct a mild solution $\tilde{v}$ to the above equation on some interval $(t_1,t_1+ \kappa)$, and
\begin{align} \label{4estimate9}
\sup_{t_1+\sigma<t<t_1+ \kappa}\sup_{x\in \R^d} |\tilde{v}|  \leq C \big(\sigma, \|u_{\infty}^2(t_1)\|_{\dot B^{s_p}_{p,p}}, \|u_{\infty}^1\|_{L^{\infty}(-5/4,0;L^{\infty})} \big), \ \ \ \forall\, 0<\sigma<\kappa/2.
\end{align}
Moreover, the global energy equality
\begin{align}
 \int_{\R^d} |\tilde{v}(t)|^2 dx + 2 \int_{t_1}^{t}\int_{\R^d} |\nabla \tilde{v}|^2 dxd\tau
= \int_{\R^d} |u_{\infty}^2(t_1)|^2 dx +  2\int_{t_1}^{t} \int_{\R^d} u_{\infty}^1 \otimes \tilde{v}:\nabla \tilde{v} dxd\tau
\end{align}
fulfills for  $t_1 \leq t<0$. Then weak-strong uniqueness{\footnote{ Barker~\cite{barker16} showed weak-strong uniqueness of 3D Navier-Stokes equation with initial data in $L^2 \cap \dot B^{s_p}_{p,p}$, see Theorem~\ref{wsunique} for details, whereas his method can still be applied to prove a similar result for the perturbed Navier-Stokes equation $\tilde{v}_{\infty}$ satisfies, where the terms  $u_{\infty}^1\cdot \nabla \tilde{v}$ and $\tilde{v} \cdot \nabla u_{\infty}^1$ don't pose  new difficulties because of  the subcriticality of $u_{\infty}^1$. }} for the equation $\tilde{v}$ solves can infer
\begin{align}
u_{\infty}^2 = \tilde{v}, \ \ \ \textrm{on} \ \  (t_1,t_1+\kappa) \times \R^d.
\end{align}
Recalling~\eqref{bound1}, ~\eqref{4estimate9} and the parabolic regularity result, we can see
 \begin{align}
 \sup_{t_1 + 2\sigma < t< t_1 + \kappa} \sup_{x \in \R^d } |\nabla^k v_{\infty}(x,t)| \leq c(\sigma, k), \ \forall \ k \in \N,
 \end{align}
 Meanwhile, on account of the fact that $\omega_{\infty}(z) =0 $ if $z \in B(0,2R_1)^c \times (t_1 + 2\sigma,t_1+ \kappa)$, one can conclude from the   unique continuation theorem (cf. \cite{EsSeSv03})
 \begin{align}
    \omega =0 \ \ \textrm{on} \ \ \R^d \times (t_1 + 2\sigma, t_1 + \kappa).
 \end{align}
 To summary,
 \begin{align}
  {\rm div}\, v_{\infty} =  {\rm curl}\, v_{\infty} = 0 \ \ \textrm{on} \ \ \R^d \times (t_1 + 2\sigma, t_1 + \kappa), \ \ \forall\, 0<\sigma<\kappa/2.
 \end{align}
Accordingly, $\Delta v_{\infty} =0 $ on the same domain. It follows from Liouville's theorem that $v_{\infty}$ equals to some constant. Owing to~\eqref{reguconstant}, we can assert
\begin{align}
v_{\infty} = 0 \ \ \textrm{on} \ \ \R^d \times (t_1 + 2\sigma, t_1 + \kappa).
\end{align}
Since $\sigma$ can be choosen to be arbitrarily small, then $v_{\infty}(t_1) = 0$, because of  the weak continuity property. However, such $t_1$ exists almost everywhere in $(-5/4,0)$,   upon using weak continuity once again, we finally obtain
 \begin{align}
 v_{\infty} (t) = 0  \ \ \textrm{for} \ \ t \in (-5/4,0].
 \end{align}
 This completes the proof.
  \end{proof}

  Before pushing forward, we give an estimate of the pressure term inside a fixed domain,  which will be used later. Estimate of this type can also be found in~\cite{DoDu09,DoDu07}.
\begin{lem} \label{quacd}
Let $0 < \gamma \leq  1/4,\, \rho>0$. $u, p$  forms a pair of  weak solution to {\rm NS} on $Q(\rho)$.  Set $r = \gamma \rho $,
then there exists a constant $C$ independent of $\gamma$, such that
   \begin{align} \label{interpressure}
   \frac{1}{r^{d-1}} \int_{Q(r)} |p|^{\frac{3}{2}} dxdt \leq C \gamma \bigg [\frac{1}{\rho^{d-1}} \int_{Q(\rho)} |p|^{\frac{3}{2}} dxdt \bigg] +
    C \gamma^{-(d-1)} \bigg [\frac{1}{\rho^{d-1}} \int_{Q(\rho)} |u|^{3} dxdt \bigg].
   \end{align}

\end{lem}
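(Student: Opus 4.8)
The plan is to split the pressure into a local Newtonian-potential piece driven by the nonlinearity and a harmonic remainder — the standard Caffarelli--Kohn--Nirenberg pressure decomposition already used in Lemma~\ref{diffpressureestimate} — and then to exploit the favorable scaling of the mean-value property for harmonic functions. Fix a cutoff $\zeta\in C_0^\infty(B(\rho))$ with $\zeta=1$ on $B(\rho/2)$, and for a.e.\ $t$ set
$$
p_1 := c_d\,T_{ij}\big(\zeta\,u^iu^j\big),\qquad p_h := p-p_1,
$$
where $T_{ij}$ is the Calderón--Zygmund operator from~\eqref{riesz} and $c_d$ the Newtonian-potential constant, chosen so that $\Delta p_1=-\partial_i\partial_j(\zeta u^iu^j)$ on $\R^d$. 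Since $u,p$ solve NS on $Q(\rho)$ we have $\Delta p=-\partial_i\partial_j(u^iu^j)$ on $B(\rho)$ for a.e.\ $t$ (cf.~\eqref{formulaofpre}); because $\zeta\equiv1$ on $B(\rho/2)$ this forces $\Delta p_h=0$ on $B(\rho/2)$, so that $p_h(\cdot,t)$ is harmonic there.

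First I would control $p_1$ by the $L^{3/2}$-boundedness of $T_{ij}$ recorded after~\eqref{riesz}:
$$
\|p_1(t)\|_{L^{3/2}(\R^d)}\le C\,\big\||u(t)|^2\big\|_{L^{3/2}(B(\rho))}=C\,\|u(t)\|_{L^3(B(\rho))}^2,
$$
with $C=C(d)$. Next, since $r=\gamma\rho\le\rho/4$, every $x\in B(r)\subset B(\rho/4)$ satisfies $B(x,\rho/4)\subset B(\rho/2)$, so the mean-value property of the harmonic function $p_h$ gives $|p_h(x,t)|\le C\rho^{-d}\|p_h(t)\|_{L^1(B(\rho/2))}$. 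Running this through Hölder's inequality produces the key gain
$$
\int_{B(r)}|p_h|^{3/2}\,dx\le Cr^d\Big(\rho^{-d}\!\!\int_{B(\rho/2)}\!|p_h|\,dy\Big)^{3/2}\le C\Big(\tfrac{r}{\rho}\Big)^{d}\!\!\int_{B(\rho/2)}\!|p_h|^{3/2}\,dy=C\gamma^{d}\!\!\int_{B(\rho/2)}\!|p_h|^{3/2}\,dy,
$$
the constant depending only on $d$.

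Combining the two bounds through $p=p_1+p_h$ and $p_h=p-p_1$, using $\int_{B(r)}|p_1|^{3/2}\le\|p_1\|_{L^{3/2}(\R^d)}^{3/2}\le C\|u(t)\|_{L^3(B(\rho))}^3$ and absorbing the harmless factor $\gamma^d\le1$, I would obtain for a.e.\ $t$ the pointwise-in-time estimate
$$
\int_{B(r)}|p|^{3/2}\,dx\le C\gamma^{d}\int_{B(\rho)}|p|^{3/2}\,dx+C\,\|u(t)\|_{L^3(B(\rho))}^{3}.
$$
Integrating over $t\in(-r^2,0)\subset(-\rho^2,0)$ gives $\int_{Q(r)}|p|^{3/2}\le C\gamma^d\int_{Q(\rho)}|p|^{3/2}+C\int_{Q(\rho)}|u|^3$, and dividing by $r^{d-1}=\gamma^{d-1}\rho^{d-1}$ yields~\eqref{interpressure}, since $\gamma^{d}/\gamma^{d-1}=\gamma$ while the velocity term picks up the factor $\gamma^{-(d-1)}$ purely from the normalization. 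The only delicate point is the scaling bookkeeping: one must confirm that the harmonic interior estimate genuinely yields the power $\gamma^{d}$ rather than a worse one, since this is exactly what — after the $r^{d-1}$ normalization — downgrades the pressure self-interaction to the favorable coefficient $C\gamma$. Keeping every constant independent of $\gamma$ is automatic, as all ratios of radii employed ($\rho/4$ to $\rho/2$, and the ball on which $T_{ij}$ acts) are fixed once and for all.
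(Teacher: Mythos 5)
Your proposal is correct and follows essentially the same route as the paper: the identical decomposition of $p$ into the Calder\'on--Zygmund image of the localized nonlinearity plus a remainder harmonic on $B(\rho/2)$, the mean-value property combined with H\"older to extract the factor $(r/\rho)^d=\gamma^d$, and the same final scaling bookkeeping after dividing by $r^{d-1}$. No gaps; the constants are indeed independent of $\gamma$ for the reasons you state.
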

\begin{proof}
Let $\phi \in C_0^{\infty}(\R^d)$ be such that ${\rm supp}\,\phi \subset B(1)$  and $\phi =1 $ on $B(1/2)$. Define $\phi_{\rho}(x) = \phi(x/\rho)$, we decompose the pressure as $p = p_{\rho} + h_{\rho} $ on $Q(\rho)$, where
\begin{align}
- \Delta p_{\rho} = \partial_i \partial_j (u_i u_j \phi_{\rho}) \ \ \textrm{on} \ \ \R^d
\end{align}
for a.e. $t\in (-\rho^2,0)$. The other part $h_{\rho}$ is harmonic on $B(\rho/2)$, so we have
\begin{align} \label{estimateofhar}
\sup_{x\in B(r)} |h_{\rho}(x)| &\leq \sup_{x\in B(r)} \frac{1}{|B(x,\rho/4)|} \int_{B(x,\rho/4)}| h_{\rho}(y)| dy  \no \\
& \leq C \frac{1}{\rho^d} \int_{B(\rho/2)} | h_{\rho}(y)| dy.
\end{align}
It follows
\begin{align}
\int_{B(r)} |p|^{\frac{3}{2}} dx \leq C \int_{B(r)} |p_{\rho}|^{\frac{3}{2}} + |h_{\rho}|^{\frac{3}{2}} dx.
\end{align}
By Calderon-Zygmund's estimate
\begin{align}
\int_{B(r)} |p_{\rho}|^{\frac{3}{2}}  dx \leq \int_{\R^d} |p_{\rho}|^{\frac{3}{2}}  dx  \leq  C \int_{B(\rho)} |u|^3 dx.
\end{align}
While according~\eqref{estimateofhar},
\begin{align}
\int_{B(r)}|h_{\rho}|^{\frac{3}{2}} dx \leq C r^{d} \sup_{x\in B(r)} |h_{\rho}(x)|^{\frac{3}{2}} & \leq C \frac{r^d}{\rho^d} \int_{B(\rho/2)} | h_{\rho}(y)|^{\frac{3}{2}} dy  \no \\
& \leq C \frac{r^d}{\rho^d} \bigg[\int_{B(\rho)} |p(y)|^{\frac{3}{2}} dy +  \int_{B(\rho)} |p_{\rho}(y)|^{\frac{3}{2}} dy \bigg].
\end{align}
Hence
\begin{align}
\int_{B(r)} |p|^{\frac{3}{2}} dx \leq C \int_{B(\rho)} |u|^3 dx + C \frac{r^d}{\rho^d} \int_{B(\rho)} |p(y)|^{\frac{3}{2}} dy.
\end{align}
Integrating in time on interval $(-r^2, 0)$ and multiplying each side by $1/r^{d-1}$,  one can obtain~\eqref{interpressure}. The proof is done.
\end{proof}

  We are ready to prove Theorem~\ref{mildblowupbesov}.
  \begin{proof}[Proof of Theorem~\ref{mildblowupbesov}]
    We claim that there exist some $0<\gamma<1$ sufficiently small and an index $n_0$ sufficiently large, so that
   \begin{align} \label{verifysmall}
   \frac{1}{\gamma^{d-1}}\int_{Q(\gamma)} |v_{n_0}|^3 + |q_{n_0}|^{\frac{3}{2}} dxdt < \tilde{\epsilon}_1.
   \end{align}
where $\tilde{\epsilon}_1$ is determined by~\eqref{scalesmaconst} and depends only on $d,\,\widetilde{M}$.
Indeed, it follows from Lemma~\ref{quacd} with $\rho =1$ that
   \begin{align}
   \frac{1}{\gamma^{d-1}}\int_{Q(\gamma)}  |q_{n}|^{\frac{3}{2}} dxdt \leq C \gamma \int_{Q(1)} |q_n|^{\frac{3}{2}} dxdt  +
    C \gamma^{-(d-1)} \int_{Q(1)} |v_n|^{3} dxdt.
   \end{align}
Due to Proposition~\ref{limitpro} and Proposition~\ref{vanishlimit}, we have for some constant $C$, it holds
   \begin{align} \label{vanish}
   \int_{Q(1)} |q_n|^{\frac{3}{2}} dxdt \leq C, \ \ \ \lim_{n\to \infty} \int_{Q(1)} |v_n|^3 dxdt = 0.
   \end{align}
Thus one can choose $\gamma$ small enough so that
   \begin{align}
   C \gamma \int_{Q(1)} |q_n|^{\frac{3}{2}} dxdt < \frac{\tilde{\epsilon}_1}{4}.
   \end{align}
Fix such $\gamma$, by~\eqref{vanish}, there exists some $n_0$ large, satisfying
\begin{align}
C \frac{1}{\gamma^{d-1}} \int_{Q(1)} |v_{n_0}|^{3} dxdt < \frac{\tilde{\epsilon}_1}{4}.
\end{align}
In this way, we find~\eqref{verifysmall} follows. Meanwhile, $v_{n_0},\,q_{n_0}$ is a pair of suitable weak solution on $Q(\gamma)$, now applying Corollary~\ref{modifiedversionregu}, one readily obtains
\begin{align}
\sup_{(x,t)\in Q(\gamma/4)} |v_{n_{0}}(x,t)| \leq C,
\end{align}
or in terms of $u$, we have
\begin{align}
\sup_{(x,t)\in Q(Z_0,\, \gamma \lambda_{n_0}/ {4}  )} |u(x,t)| \leq C \lambda_{n_0}^{-1},
\end{align}
which obviously contradicts to our hypothesis that $Z_0$ is a singular point, hence, the conclusion of Theorem~\ref{mildblowupbesov} is true. This completes the  proof.
  \end{proof}

%%%%%%%%%%%%%%%%%%%%%%%%%%%%%%%%%%%%%%%%%%%%%%%%%%%%%%%%%%%%%%%%%%%%%%%%%%%%%%%%%%%%%%%%%%%%%%%%%%%%%%%%%%%%%%%%%%%%%%%%%%%%%%%%%%%
%%%%%%%%%%%%%%%%%%%%%%%%%%%%%%%%%%%%%%%%%%%%%%%%%%%%%%%%%%%%%%%%%%%%%%%%%%%%%%%%%%%%%%%%%%%%%%%%%%%%%%%%%%%%%%%%%%%%%%%%%%%%%%%%%%%
%%%%%%%%%%%%%%%%%%%%%%%%%%%%%%%%%%%%%%%%%%%%%%%%%%%%%%%%%%%%%%%%%%%%%%%%%%%%%%%%%%%%%%%%%%%%%%%%%%%%%%%%%%%%%%%%%%%%%%%%%%%%%%%%%%%

\section{Leray-Hopf solution in critical Besov space} \label{lerayhopfregu}

The objective of this section is to  show an endpoint Serrin type regularity criterion for Leray-Hopf solution, i.e. Corollary~\ref{leraybesov}. Before stating the precise notion of Leray-Hopf solution, we first clarify some necessary notations being used, let $\dot C_0^{\infty}(\R^d):= \{u \in C_0^{\infty}(\R^d):\ {\rm div}\,u = 0\}$, $\dot J$ and $\dot J^1_2$ represent the closure of $\dot C_0^{\infty}$ in the norm of $L^2$ and Dirichlet integral respectively.
\begin{defn} [Leray-Hopf solution] \label{weakleray}
Let $T \in (0,\infty]$, a vector field $v$ is said to be a Leray-Hopf solution to~\eqref{ns} on $Q_T: = (0,T) \times \R^d$ if
\begin{itemize}
  \item[\rm (i)] $v \in L^{\infty}(0,T;\dot J) \cap L^2(0,T; \dot J_2^1)$  satisfies {\rm NS } distributionally on $Q_T$;
  \item[\rm (ii)] For each $g \in L^2$, the function $t \mapsto \int_{\R^d} v(x,t) g(x) dx $ is continuous on $[0,T]$;
  \item[\rm (iii)] The global energy inequality
    \begin{align}
      \|v(t)\|^2_{L^2} + 2 \int_0^t \|\nabla v(s)\|_{L^2}^2 ds  \leq \|v_0\|_{L^2}^2
    \end{align}
    holds for each $t \in  [0,T]$.
\end{itemize}
\end{defn}
Notably, in the above definition, we call $v$ a global Leray-Hopf solution if $T =\infty$ and the interval $[0,T]$ is replaced by $[0,\infty)$ in {\rm (ii)} and {\rm (iii)}.

The following weak-strong uniqueness result shows the connection between the Leray-Hopf solution and mild solution in critical Besov space, and plays an important role in the upcoming proof. We point out that its three dimensional counterpart  is  contained in~\cite{barker16}, where the proof  can be  adapted to higher dimension without too many difficulties.
\begin{thm} [Weak-Strong uniqueness] \label{wsunique}
Let $3\leq d<p,\,q < \infty$,  $u$ be a Leray-Hopf solution to~\eqref{ns} associated with initial data $u_0 \in L^2(\R^d) \cap \dot B^{s_p}_{p,q}(\R^d)$, then $u$ coincides with the mild solution $NS(u_0)$ until $T(u_0)$, in particular, $u$ is regular for the same time interval.
\end{thm}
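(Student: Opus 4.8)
The plan is to fix a time $T<T(u_0)$, prove that the Leray--Hopf solution $u$ coincides on $[0,T]$ with the mild solution $v:=NS(u_0)$ furnished by Theorem~\ref{subcribesov}, and then let $T\uparrow T(u_0)$. Since the hypothesis gives $u_0\in L^2(\R^d)$, the mild solution is not merely critical: it is smooth on $(0,T]\times\R^d$ by Proposition~\ref{linftybd}, and, as will follow from the decomposition below, it lies in the energy space $E_T$ and obeys the energy \emph{equality}. Thus both $u$ and $v$ belong to $E_T$ and share the initial value $u_0$, so the statement is a weak--strong uniqueness comparison, the only genuine difficulty being the critical (endpoint) scaling of the data near $t=0$.

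First I would reduce to a perturbed equation exactly as in the construction of Section~\ref{prioriestimate}. Applying Lemma~\ref{besovdec} to $u_0$ with a small parameter $\eta$, decompose $u_0=u_{0,1}+u_{0,2}$ with $u_{0,2}\in L^2\cap\dot B^{s_p}_{p,p}$ and $u_{0,1}\in \dot B^{s_m+\delta}_{m,m}\cap\dot B^{s_p}_{p,p}$; because $u_0\in L^2$ and $u_{0,2}\in L^2$, one automatically has $u_{0,1}\in L^2$ as well. Set $V:=NS(u_{0,1})$. The point of the subcritical space $\dot B^{s_m+\delta}_{m,m}$ is the gain $\delta>0$: $V\in\mathscr{K}^{s_m+\delta}_{m,\infty}(T)$ yields $\int_0^T\|V(t)\|_{L^m}^r\,dt<\infty$ for the admissible pair $2/r+d/m=1$, i.e. $V$ lies in a genuine (time-integrable) Serrin class, whereas a critical datum would only give the logarithmically divergent $\int_0^T t^{-1}\,dt$. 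Writing $U:=v-V$ and $\tilde U:=u-V$, both belong to $E_T$ (as $V\in E_T$), and both solve the same perturbed Navier--Stokes system with the extra terms $V\cdot\nabla(\cdot)$ and $(\cdot)\cdot\nabla V$ and initial value $u_{0,2}$---precisely the equation solved by $\tilde v$ in Proposition~\ref{vanishlimit}. It therefore suffices to prove weak--strong uniqueness for this perturbed system, with $U$ playing the role of the strong solution and $\tilde U$ that of the weak one.

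The core is the energy estimate for $W:=\tilde U-U$. Subtracting the equations, using ${\rm div}\,W={\rm div}\,V=0$ and the energy inequality for $\tilde U$ against the energy equality for $U$, I obtain $\|W(t)\|_{L^2}^2+2\int_0^t\|\nabla W\|_{L^2}^2$ bounded by a sum of trilinear terms. The two terms carrying $V$ are subcritical and are absorbed by Lemma~\ref{trilinearesti}: each is $\le\epsilon\int\|\nabla W\|_{L^2}^2+C_\epsilon\int\|V\|_{L^m}^r\|W\|_{L^2}^2$, whose coefficient is integrable in time, so they contribute only a finite Gronwall factor. The remaining term is the critical self-interaction $\int(W\cdot\nabla U)\cdot W$ coming from the strong solution $U$; this is where the difficulty is concentrated.

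The main obstacle is controlling this last term near $t=0$, where $U$ carries critical $\dot B^{s_p}_{p,q}$ data and its natural Serrin norm is only borderline non-integrable. My plan is to split the time interval as $[0,\delta]\cup[\delta,T]$. On $[\delta,T]$ the strong solution $U$ is smooth and bounded (Proposition~\ref{linftybd}), so $\int(W\cdot\nabla U)\cdot W$ is harmlessly dominated and handled by Gronwall. On $[0,\delta]$ I would exploit the assumption $q<\infty$: by the heat-semigroup characterization~\eqref{besovheat}, the Serrin-type tail of $U$, measured through $\big(\int_0^\delta(t^{-s_p/2}\|U(t)\|_{L^p})^q\,dt/t\big)^{1/q}$, tends to $0$ as $\delta\to0$, which makes the effective Gronwall constant on $[0,\delta]$ arbitrarily small; combined with $\|W(t)\|_{L^2}\to0$ as $t\to0$ (both solutions start from $u_{0,2}$), this forces $W\equiv0$ on $[0,\delta]$ and then, by the $[\delta,T]$ step, on all of $[0,T]$. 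Hence $\tilde U=U$, i.e. $u=v$ on $[0,T]$, and letting $T\uparrow T(u_0)$ finishes the proof. This near-$t=0$ endpoint analysis, which is exactly where $q<\infty$ is indispensable, is the step I expect to require the most care, and it is carried out following the three-dimensional argument of Barker~\cite{barker16}, the subcriticality of $V$ ensuring that the perturbation terms introduce no new obstruction.
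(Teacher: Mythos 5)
The paper never actually proves Theorem~\ref{wsunique}: it cites Barker~\cite{barker16} for $d=3$ and asserts that the proof ``can be adapted to higher dimension without too many difficulties.'' Your outline follows exactly the blueprint that citation points to (split $u_0$ by Lemma~\ref{besovdec}, compare $u-V$ with $NS(u_0)-V$ for the perturbed system, handle the subcritical drift $V$ by Lemma~\ref{trilinearesti} plus Gronwall), and your treatment of the $V$-terms and of the interval $[\delta,T]$ is sound. Some secondary points are glossed over but repairable: Lemma~\ref{besovdec} is stated only for $\dot B^{s_p}_{p,p}$, so you need the preliminary embedding $\dot B^{s_p}_{p,q}\subset \dot B^{s_{p'}}_{p',p'}$, $p'=\max\{p,q\}$; the perturbed energy inequality for $u-V$ and the cross-term identity require justifying $V$ (and then $U$) as test functions against the merely distributional $u$; and the $L^q(dt/t)$ bound you invoke for the nonlinear solution $U$ is not contained in Theorem~\ref{subcribesov}, which only gives $\mathscr{K}^{s}_{p,\infty}$ bounds.

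The genuine gap is your mechanism on $[0,\delta]$. Since $W$ is controlled only in the energy space, every H\"older/Young arrangement of $\int_0^\delta\!\int (W\cdot\nabla W)\cdot U$ forces the $U$-factor into a \emph{critical Serrin class}: if $W\in L^{r_1}_tL^{s_1}_x$ with $2/r_1+d/s_1=d/2$ (energy interpolation) and $\nabla W\in L^2_tL^2_x$, then H\"older leaves $2/r_3+d/s_3=1$ for $U$, so what must be finite (or small) is $\int_0^\delta\|U\|_{L^p}^{r}\,dt$ with $r=2p/(p-d)$. Your quantity $\bigl(\int_0^\delta (t^{-s_p/2}\|U\|_{L^p})^q\,dt/t\bigr)^{1/q}$ is the Lorentz $L^{r,q}$-in-time version of this norm, and it controls $\int_0^\delta\|U\|_{L^p}^{r}dt=\int_0^\delta(t^{-s_p/2}\|U\|_{L^p})^{r}\,dt/t$ only when $q\le r$: the measure $dt/t$ on $(0,\delta)$ is infinite, so $L^q(dt/t)\cap L^\infty\not\subset L^r(dt/t)$ for $r<q$ (take $t^{-s_p/2}\|U\|_{L^p}\sim(\log(e/t))^{-1/r}$, which your estimates do not exclude). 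The condition $q\le 2p/(p-d)$ is exactly $2/q+d/p\ge1$, and after your reduction to $q=p$ it reads $p\le d+2$; the theorem allows $2/q+d/p<1$ (e.g.\ $d=4$, $p=q=10$), and there your ``effective Gronwall constant'' is not small but $+\infty$. Note also that smallness is not the operative issue: Gronwall from zero data needs only integrability of the coefficient, and where integrability fails, smallness of a weaker Lorentz norm cannot substitute for it. Moreover no interpolation of the available bounds on $U$ (energy, $\mathscr{K}^{s_p}_{p,\infty}$, $L^\infty_t\dot B^{s_p}_{p,p}$) lands it in any Serrin class --- the scaling is exactly borderline --- so the regime $2/q+d/p<1$ cannot be reached by the energy-plus-Serrin-Gronwall scheme at all. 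That regime is precisely the content of Barker's theorem; handling it requires his genuinely different structural arguments for the finite-energy part of the strong solution, and for $d\ge4$ one must additionally check that those arguments survive the loss of subcriticality of the energy space. Deferring that step to ``following Barker'' leaves the proof where the paper left it: as a citation.
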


\begin{proof} [Proof of Corollary~\ref{leraybesov}]

Let $d<p,\,q<\infty$, $u \in L^{\infty}(0,T;\dot B^{s_p}_{p,q})$ be a Leray-Hopf solution. Due to the weak continuity, one can deduce that the initial data $u_0$ satisfies
\begin{align}
u_0 \in L^2 \cap \dot B^{s_p}_{p,q}.
\end{align}
Employing Theorem~\ref{wsunique}, one can see
\begin{align}
u = NS(u_0) \ \ \ \textrm{on} \ \ \R^d \times [0,T(u_0)).
\end{align}
We claim  that
\begin{align} \label{exitencetime}
T(u_0)> T.
\end{align}
Otherwise, if $T(u_0) \leq T$, then  Theorem~\ref{mildblowupbesov} implies
\begin{align}
\limsup_{t\to T(u_0)} \|u(t)\|_{\dot B^{s_p}_{p,q}} = \infty.
\end{align}
This is contrary to our hypothesis, so~\eqref{exitencetime} holds. Since the mild solution $NS(u_0)$ is smooth on $\R^d \times (0,T(u_0))$, so does $u$. The uniqueness follows immediately from Theorem~\ref{wsunique}.  We complete the  proof.
\end{proof}

\noindent{\bf Acknowledgment.} Both of the authors were
supported in part by the National Science Foundation of China, grants 11271023 and 11771024.  The first named author is grateful to Professor F. Planchon for his valuable discussions when he was visiting Laboratoire J.~A.~Dieudonn\'e.

\medskip
\footnotesize

\end{document}